\DeclareMathAlphabet{\mathpzc}{OT1}{pzc}{m}{it}
\numberwithin{equation}{section}
\newtheorem{thm}{Theorem}[section]
\newtheorem{prop}[thm]{Proposition}
\newtheorem{cor}[thm]{Corollary}
\newtheorem{lem}[thm]{Lemma}
\newtheorem{conj}{Conjecture}[section]
\theoremstyle{definition}
\newtheorem{defn}[thm]{Definition}
\newtheorem{rem}[thm]{Remark}
\newcommand{\Z}{\mathbb{Z}}
\newcommand{\R}{\mathbb{R}}
\newcommand{\C}{\mathbb{C}}
\newcommand{\F}{\mathbb{F}}
\newcommand{\scrR}{\mathscr{R}}
\newcommand{\G}{\mathscr{G}}
\newcommand{\A}{\mathscr{A}}
\newcommand{\rk}{\mathrm{rk}}
\newcommand{\Hom}{\mathrm{Hom}}
\newcommand{\Tr}{\operatorname{Tr}}
\newcommand{\Tor}{\operatorname{Tor}}
\newcommand{\im}{\operatorname{im}}
\newcommand{\gr}{\operatorname{gr}}
\newcommand{\id}{\operatorname{id}}
\newcommand{\CF}{\mathrm{CF}}
\newcommand{\HF}{\mathrm{HF}}
\newcommand{\HFhat}{\widehat{\mathbf{HF}}}
\newcommand{\SI}{\operatorname{SI}}
\newcommand{\CSI}{\operatorname{CSI}}
\newcommand{\bfalpha}{\boldsymbol{\alpha}}
\newcommand{\bfbeta}{\boldsymbol{\beta}}
\newcommand{\bfgamma}{\boldsymbol{\gamma}}
\newcommand{\bfdelta}{\boldsymbol{\delta}}
\newcommand{\bfx}{\mathbf{x}}
\newcommand{\calH}{\mathcal{H}}
\newcommand{\bfi}{\mathbf{i}}
\newcommand{\bfj}{\mathbf{j}}
\newcommand{\bfk}{\mathbf{k}}
\newcommand{\bbL}{\mathbb{L}}
\newcommand{\bbK}{\mathbb{K}}
\newcommand{\Khr}{\operatorname{Khr}}
\newcommand{\Hol}{\operatorname{Hol}}
\newcommand{\calD}{\mathcal{D}}
\newcommand{\CKh}{\operatorname{CKh}}
\newcommand{\CKhr}{\operatorname{CKhr}}
\newcommand{\Kh}{\operatorname{Kh}}
\newcommand{\bfC}{\mathbf{C}}
\newcommand{\bfD}{\boldsymbol{\partial}}
\newcommand{\Cone}{\operatorname{Cone}}
\newcommand{\bfeta}{\boldsymbol{\eta}}
\newcommand{\Hd}{\operatorname{Hd}}
\newcommand{\calM}{\mathcal{M}}
\newcommand{\calE}{\mathcal{E}}
\newcommand{\calI}{\mathcal{I}}
\newcommand{\calS}{\mathcal{S}}
\newcommand{\calB}{\mathcal{B}}
\newcommand{\CS}{\operatorname{CS}}
\newcommand{\IC}{\mathrm{IC}}
\newcommand{\I}{\mathrm{I}}
\newcommand{\scrC}{\mathscr{C}}
\newcommand{\SU}{\mathrm{SU}}
\newcommand{\SO}{\mathrm{SO}}
\newcommand\todo[1]{\textbf{\textcolor{red}{#1}}}
\newcommand\hide[1]{}
\title[Traceless Character Varieties, Spectral Sequences, and Khovanov Homology]{Traceless Character Varieties, A Link Surgeries Spectral Sequence, and Khovanov Homology}
\author{Henry T. Horton}
\begin{document}

\begin{abstract}
In \cite{horton1}, we constructed a well-defined Lagrangian Floer invariant for any closed, oriented $3$-manifold $Y$ via the symplectic geometry of so-called traceless $\SU(2)$-character varieties. This invariant, $\SI(Y)$, which we refer to as the {\bf symplectic instanton homology} of $Y$, was also shown to satisfy an exact triangle for Dehn surgeries on knots which is typical of Floer-theoretic invariants of $3$-manifolds.

In this article, we demonstrate further structural properties of this symplectic instanton homology. For example, Floer theories are expected to roughly satisfy the axioms of a topological quantum field theory (TQFT), so that in particular they should be functorial with respect to cobordisms. Following a strategy used by Ozsv\'ath and Szab\'o in the context of Heegaard Floer homology, we prove that our theory is functorial with respect to connected $4$-dimensional cobordisms, so that cobordisms induce homomorphisms between symplectic instanton homologies. We also generalize the surgery exact triangle by proving that Dehn surgeries on a \emph{link} $L$ in a $3$-manifold $Y$ induce a spectral sequence of symplectic instanton homologies -- the $E^2$-page is isomorphic to a direct sum of symplectic instanton homologies of all possible combinations of $0$- and $1$-surgeries on the components of $L$, and the spectral sequence converges to $\SI(Y)$. For the branched double cover $\Sigma(L)$ of a link $L \subset S^3$, we show there is a link surgery spectral sequence whose $E^2$-page is isomorphic to the reduced Khovanov homology of $L$ and which converges to the symplectic instanton homology of $\Sigma(L)$.
\end{abstract}

\maketitle

\tableofcontents

\addtocontents{toc}{\protect\setcounter{tocdepth}{1}}

\section{Introduction}

The study of anti-self dual $\SU(2)$-connections on smooth $4$-manifolds led to remarkable development in low-dimensional topology in the 1980's. At the end of that decade, Andreas Floer \cite{instanton-invariant} defined an invariant of any homology $3$-sphere $Y$, the {\bf instanton homology group} $I(Y)$, which is obtained from a chain complex generated by nontrivial flat $\SU(2)$-connections on $Y$ with differential counting anti-self dual connections on $Y \times \R$ interpolating between given flat connections. The study of these groups led to more unprecedented advances in low-dimensional topology.

In a similar vein, Floer \cite{Floer1} also defined a similar homological invariant in the context of symplectic geometry. Given a pair of Lagrangian submanifolds $L_0$, $L_1$ in some symplectic manifold $(M, \omega)$, this {\bf Lagrangian Floer homology} is computed from a chain complex with generators the intersection points of $L_0$ and $L_1$ and the differential  counts pseudoholomorphic strips $u: \R \times [0,1] \longrightarrow M$ with $u(\R, 0) \subset L_0$ and $u(\R, 1) \subset L_1$ that tend asymptotically to elements of $L_0 \cap L_1$.

Despite the different contexts in which instanton and Lagrangian Floer homology arise, there is a tantalizing symplectic interpretation of the objects appearing in the instanton chain complex. If one fixes a genus $g$ Heegaard splitting $Y = H_0 \cup_{\Sigma_g} H_1$, then the $\SU(2)$-character variety $M(\Sigma_g)$ is naturally a (stratified) symplectic manifold, and by restriction to the boundary the $\SU(2)$-character varieties of the handlebodies $H_i$ embed as Lagrangian submanifolds $L(H_i)$ in $M(\Sigma_g)$. The intersection points $L(H_0) \cap L(H_1)$ are in one-to-one correspondence with flat $\SU(2)$-connections on $Y$. Furthmore, holomorphic curves in $M(\Sigma_g)$ can be shown to give rise to anti-self dual connections near the Heegaard surface (\emph{e.g.} on $(\Sigma_g \times \R) \times \R$). These observations led Atiyah \cite{atiyah-floer} to posit the {\bf Atiyah-Floer conjecture}, which says that in this setting, $I(Y) \cong \HF(L(H_0), L(H_1))$.

While inspiring, the Atiyah-Floer conjecture is not quite well-posed. The problem is that $M(\Sigma_g)$ is only a stratified symplectic manifold, so that globally it has singularities. As a result, the Lagrangian Floer homology $\HF(L(H_0),L(H_1))$ is not obviously well-defined, and a lot of work is required to make sense of the relevant holomorphic curve counts. Salamon and Wehrheim have a long-active program to rigorously define $\HF(L(H_0),L(H_1))$ \cite{salamon-af,salamon-wehrheim,wehrheim-af}, but another approach would be to work in a setting where we can be sure to avoid any singularities in the relevant symplectic manifolds. On the gauge theory side, Kronheimer and Mrowka \cite{yaft} have described techniques by which one may obtain smooth configuration spaces of flat connections. In \cite{horton1}, we adapted a variant of Kronheimer-Mrowka's constructions to the symplectic setting, obtaining a so-called {\bf symplectic instanton homology} $\SI(Y)$.

The idea of our construction is roughly as follows. Let $Y$ be a closed, oriented $3$-manifold (not necessarily a homology sphere) with a Heegaard splitting $Y = H_0 \cup_{\Sigma_g} H_1$. If we choose a basepoint $x \in \Sigma_g$, then in a small $3$-ball neighborhood of $x$ we may remove a regular neighborhood of a $\theta$-graph (\emph{i.e.} a graph with two vertices and three edges connecting them) from $Y$. Here the $\theta$-graph is ``standardly'' embedded, so that each edge intersects $\Sigma_g$ once and each handlebody $H_0$, $H_1$ contains one of the vertices. The intersections of the pieces of the Heegaard decomposition with the complement of this $\theta$-graph are written $\Sigma_g^\theta$, $H_\alpha^\theta$, and $H_\beta^\theta$.

The key point is that instead of looking at the usual $\SU(2)$-character variety of each piece of the Heegaard splitting, we instead add the condition that meridians of the edges of the $\theta$-graph should be sent to the conjugacy class of traceless $\SU(2)$ matrices. This means that we associate to $\Sigma_g^\theta$ the {\bf traceless $\SU(2)$-character variety}
\[
	\scrR_{g,3} = \left.\left\{ A_1, B_1, \dots, A_g, B_g, C_1, C_2, C_3 \in \SU(2) ~\left|~ \begin{array}{c}\prod_{k = 1}^g [A_k, B_k] = C_1C_2C_3, \\ \operatorname{Tr}(C_k) = 0 \end{array}\right\}\right.\right/\text{conj}.
\]
$\scrR_{g,3}$ is naturally a smooth symplectic manifold, and the images $L_0$, $L_1$ of the traceless character varieties of $H_0^\theta$, $H_1^\theta$ in $\scrR_{g,3}$ are smooth Lagrangian submanifolds. In \cite{horton1} we proved that
\[
	\SI(Y) = \HF(L_0, L_1),
\]
the {\bf symplectic instanton homology} of $Y$, is a natural homeomorphism invariant of $Y$.

More generally, given a homology class $\omega \in H_1(Y; \Z/2)$, there is a symplectic instanton homology $\SI(Y,\omega)$ related to the moduli space of flat connections on the $\SO(3)$-bundle $P \longrightarrow Y$ with second Stiefel-Whitney class Poincar\'e dual to $\omega$, see \cite[Section 7]{horton1}. The main structural properties of $\SI(Y)$ established in \cite{horton1} are the K\"unneth principle for connected sums,
\[
	\SI(Y \# Y') \cong (\SI(Y) \otimes \SI(Y')) \oplus \Tor(\SI(Y),\SI(Y')),
\]
and the existence of an exact triangle for Dehn surgeries,
\[
	\xymatrix{\SI(Y,\omega_K) \ar[rr] & & \SI(Y_\lambda(K)) \ar[dl] \\
	 & \SI(Y_{\lambda + \mu}(K)) \ar[ul] & }
\]
where $\omega_K$ is the mod $2$ homology class of the knot $K$ the Dehn surgery is performed along.

The purpose of the present article to to exhibit further structural properties of these symplectic instanton homology groups. For example, one further expects that a Floer theoretic invariant for $3$-manifolds associates to a cobordism $W: Y \longrightarrow Y'$ between $3$-manifolds a homomorphism between their Floer homology groups, (almost) giving the structure of a $(3+1)$-dimensional TQFT. We show that this is indeed the case for our symplectic instanton homology groups (see Theorem \ref{thm:cobordism-maps}):

\begin{thm}
Associated to any connected cobordism $W: Y \longrightarrow Y'$ of connected, closed, oriented $3$-manifolds $Y$, $Y'$, there is a homomorphism $\SI(W): \SI(Y) \longrightarrow \SI(Y')$ which is a diffeomorphism invariant of $W$. The assignment $W \mapsto \SI(W)$ is functorial in the sense that $\SI(Y \times [0,1]) = \id_{\SI(Y)}$ and for any second cobordism $W': Y' \longrightarrow Y''$, $\SI(W \cup W^\prime) = \SI(W^\prime) \circ \SI(W)$.
\end{thm}

We may also define homomorphisms between the symplectic instanton homology groups for nontrivial $\SO(3)$-bundles that are induced by $4$-dimensional cobordisms $W$ equipped with a mod $2$ homology class $\Omega \in H_2(W, \partial W; \F_2)$:

\begin{thm}
Given a compact, connected, oriented cobordism $W: Y \longrightarrow Y'$ of closed, connected, oriented $3$-manifolds and a mod $2$ homology class $\Omega \in H_2(W, \partial W; \F_2)$, there is an induced homomorphism
\[
	\SI(W, \Omega): \SI(Y, \partial \Omega|_Y) \longrightarrow \SI(Y', \partial \Omega|_{Y^\prime})
\]
that is an invariant of the $(W, \Omega)$. Furthermore, the assignment $(W, \Omega) \mapsto \SI(W, \Omega)$ is functorial in the sense that
\begin{itemize}
	\item[(1)] $\SI(Y \times [0,1], \omega \times [0,1]) = \id_{\SI(Y,\omega)}$.
	\item[(2)] $\SI(W', \Omega') \circ \SI(W,\Omega) = \SI(W \cup_{Y^\prime} W', \Omega + \Omega')$.
\end{itemize}
\end{thm}

With the functoriality of $\SI(Y,\omega)$ in place, we can identify the maps in surgery exact triangle for symplectic instanton homology:

\begin{thm}
\textup{(cf. \cite[Theorem 1.3]{horton1})} For any framed knot $(K, \lambda)$ in a closed, oriented $3$-manifold $Y$, there is an exact triangle
\[
	\xymatrix{\SI(Y, \omega_K) \ar[rr]^{\SI(W,\Omega_K)} & & \SI(Y_\lambda(K)) \ar[dl]^{\phantom{p}\SI(W_\lambda)} \\
	 & \SI(Y_{\lambda + \mu}(K)) \ar[ul]^{\SI(W_{\lambda + \mu},\Omega_K^\prime)\phantom{ppp}} & }
\]
where $\omega_K$ is the mod $2$ homology class in $Y$ represented by the knot $K$ and $\Omega_K$ is the relative mod $2$ homology class in $W$ represented by the core of the $2$-handle attached to $K$ (similarly for $\Omega_K^\prime$ in $W_{\lambda + \mu}$).
\end{thm}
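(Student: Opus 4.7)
The plan is to follow the general template for surgery exact triangles in Lagrangian Floer homology pioneered by Ozsváth--Szabó for Heegaard Floer homology, adapted to the traceless character variety setting of \cite{horton1}. The idea is to represent $Y$, $Y_\lambda(K)$, and $Y_{\lambda+\mu}(K)$ by three Heegaard splittings sharing a common alpha-handlebody and differing only in a single beta-disk, with the three distinguishing curves pairwise intersecting transversely in exactly one point.

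First I would pick a Heegaard splitting $Y = H_\alpha \cup_{\Sigma_g} H_\beta$ in which the meridian $\mu$ of $K$ is realized as a curve $\beta_1$ bounding a disk in $H_\beta$, while the framing $\lambda$ and the curve $\lambda + \mu$ are represented by curves $\gamma_1, \delta_1 \subset \Sigma_g$ meeting $\beta_1$ and one another transversely in a single point. Replacing $\beta_1$ by $\gamma_1$ in the attaching data produces a Heegaard splitting of $Y_\lambda(K)$, and replacing it by $\delta_1$ produces one of $Y_{\lambda+\mu}(K)$. Write $\bfbeta, \bfgamma, \bfdelta$ for the three attaching tuples, and let $L_\bfbeta, L_\bfgamma, L_\bfdelta \subset \M_{g,3}$ be the associated traceless-character-variety Lagrangians with the meridional twists that encode $\omega_K$ on the $\bfbeta$ side. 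Using the triangle-map machinery of \cite{horton1}, the three Heegaard triples $(\bfalpha, \bfbeta, \bfgamma)$, $(\bfalpha, \bfgamma, \bfdelta)$, $(\bfalpha, \bfdelta, \bfbeta)$ define chain maps among the three Floer complexes via counts of holomorphic triangles in $\M_{g,3}$ weighted by distinguished top generators. Invoking the functoriality of $\SI$ established in the preceding theorem, I would identify these triangle maps with $\SI(W, \Omega_K)$, $\SI(W_\lambda)$, and $\SI(W_{\lambda+\mu}, \Omega_K^\prime)$; matching the $\omega$-twists requires checking that the class $\omega_K$ on $Y$ corresponds to the meridional weight on $\beta_1$, and that $\Omega_K, \Omega_K^\prime$ correspond to the relative weights on the triple diagrams.

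With the three maps identified as triangle counts, exactness splits into two steps standard in the Heegaard Floer literature. First, consecutive compositions are null-homotopic via counts of holomorphic quadrilaterals with boundary on $(L_\bfalpha, L_\bfbeta, L_\bfgamma, L_\bfdelta)$: the ends of the one-dimensional quadrilateral moduli produce a chain homotopy whose boundary equals the composition, provided one identifies the triangle product of the canonical top generators of $\HF(L_\bfbeta, L_\bfgamma)$ and $\HF(L_\bfgamma, L_\bfdelta)$ with the canonical top generator of $\HF(L_\bfbeta, L_\bfdelta)$. Second, to show that the mapping cone of the triangle map $f_{\bfbeta\bfgamma}$ is quasi-isomorphic to $\CSI(L_\bfalpha, L_\bfdelta)$, I would adapt the neck-stretching / area-filtration argument of \cite{oz-sz-app}: filter the cone by a suitable energy or combinatorial weight, run the resulting spectral sequence, and use the local model near the small $\beta_1\gamma_1\delta_1$-triangle to identify the $E^\infty$-page with the target complex.

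The hardest technical step will be carrying out this Floer analysis directly in the traceless character variety $\M_{g,3}$, rather than in the symmetric product that controls Heegaard Floer homology. One must establish uniform area and monotonicity bounds on the triangle and quadrilateral moduli, verify transversality and Gromov compactness for the various polygon counts, and confirm that the canonical top generators behave as expected under the triangle product --- the last point amounting to an explicit local calculation in the character variety near the small $\beta_1\gamma_1\delta_1$-triangle on $\Sigma_g$. In addition, the three Lagrangians $L_\bfbeta, L_\bfgamma, L_\bfdelta$ must share a common monotonicity constant so that signs, gradings, and orientations can be coherently compared across all three pairs. Once these geometric inputs are secured, the algebraic exact triangle follows formally from the triangle and quadrilateral product structures.
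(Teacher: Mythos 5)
Your proposal follows the classical Ozsv\'ath--Szab\'o surgery-triangle template in the traceless character variety: a Heegaard quadruple with curves $\beta_1, \gamma_1, \delta_1$ pairwise intersecting once, triangle maps for the cobordisms, quadrilateral counts for nullhomotopies, and a local-model/filtration argument to identify the mapping cone. The paper takes a genuinely different route. Its crucial observation is that $L_{\delta_{g+1}} = \tau_V L_{\gamma_{g+1}}$ for a symplectic Dehn twist along a Lagrangian sphere $V$ dual to the meridian, after which everything follows from Seidel's exact triangle for fibered Dehn twists in its quilted form (Proposition \ref{prop:seidel-triangle}). The algebraic exactness is supplied by the Double Mapping Cone Lemma together with the $\Lambda_R$-energy filtration, with the inputs $C\Phi_0$, $C\Phi_1$, $c$, $k$, $h$ coming from Lefschetz-fibration theory rather than from a hand-built local model near the small $\beta_1\gamma_1\delta_1$-triangle; the recognition of the arrows and the nullhomotopy as triangle and rectangle counts (Propositions \ref{Phi1Triangle}, \ref{Phi0Triangle}, \ref{hRectangle}) is then done afterward, not as the starting point. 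That is precisely what the Seidel route buys: it sidesteps the uniform transversality, Gromov compactness, and local-triangle analysis in $\M_{g,3}$ that you flag as your hardest step and that are not currently secured for this moduli space. It also handles the third arrow not via any triangle-map analysis at all, but by the cyclic symmetry of surgery triads, rotating the triad so that the other two arrows can be computed directly.

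There is also a concrete error in your outline. You assert that the nullhomotopy works ``provided one identifies the triangle product of the canonical top generators of $\HF(L_\bfbeta, L_\bfgamma)$ and $\HF(L_\bfgamma, L_\bfdelta)$ with the canonical top generator of $\HF(L_\bfbeta, L_\bfdelta)$.'' This is wrong: for the quadrilateral count to give a \emph{nullhomotopy} of the composition, the triangle product $\mu_2(\Theta_{\beta\gamma}, \Theta_{\gamma\delta})$ must \emph{vanish}. And it does vanish, because the genus-one $\beta_1\gamma_1\delta_1$-summand of the triple diagram is exactly the blowup cobordism, whose induced map is zero --- the paper makes this observation explicitly in the proof that $\bfD^2 = 0$ for the link surgeries complex, citing the blowup vanishing theorem of \cite{horton1}. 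If the product were instead the top generator, the quadrilateral count would produce a chain homotopy from the composition $F_{W_\lambda}\circ F_W$ to the nonzero chain map $\mu_2(\cdot, \Theta_{\beta\delta})$, not a nullhomotopy, and your exactness argument would not get off the ground.
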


Note that the surgery exact triangle is for Dehn surgery on a \emph{knot}. For general \emph{links}, the surgery exact triangle must be replaced with a spectral sequence.

\begin{thm}
For any framed link $(L, \lambda)$ in a closed, oriented $3$-manifold $Y$, there is a spectral sequence whose $E^1$-page is a direct sum of symplectic instanton homologies of all possible combinations of $\lambda_k$- and $(\lambda_k + \mu_k)$-surgeries on the components $L_k$ of $L$ that converges to $\SI(Y, \omega_L)$.
\end{thm}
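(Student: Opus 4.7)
The plan is to construct, for any framed link $(L, \lambda)$ with components $L_1, \ldots, L_n$, an $n$-dimensional hypercube of chain complexes $\bigl(\CSI(Y_v)\bigr)_{v \in \{0,1\}^n}$, where $Y_v$ is the $3$-manifold obtained by performing $\lambda_k$-surgery on $L_k$ when $v_k = 0$ and $(\lambda_k + \mu_k)$-surgery when $v_k = 1$. The edge maps will be the cobordism maps provided by Theorem~2, while the higher face maps must be constructed so that the hypercube relations hold and the total complex is chain homotopy equivalent to $\CSI(Y, \omega_L)$. Filtering the total complex by $|v|$ then produces a spectral sequence with $E^1$-page $\bigoplus_{v \in \{0,1\}^n} \SI(Y_v)$ converging to $\SI(Y, \omega_L)$.

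The strategy for building the hypercube is induction on $n$, resting on a twisted generalization of the knot surgery triangle: given a framed knot $(K, \lambda)$ in $(Y, \omega)$ disjoint from a cycle representing $\omega \in H_1(Y; \F_2)$, one has an exact triangle
\[
    \SI(Y, \omega + \omega_K) \to \SI(Y_\lambda(K), \omega) \to \SI(Y_{\lambda + \mu}(K), \omega) \to \SI(Y, \omega + \omega_K),
\]
whose proof should be a direct adaptation of that of Theorem~4 with $\omega$ carried as a spectator supported away from the surgery region. Decomposing $\omega_L = \omega_{L_n} + \omega_{L \setminus L_n}$ and applying the twisted triangle to $K = L_n$ with spectator twist $\omega_{L \setminus L_n}$ presents $\CSI(Y, \omega_L)$ as the mapping cone of a chain map between $\CSI(Y_{\lambda_n}(L_n), \omega_{L \setminus L_n})$ and $\CSI(Y_{\lambda_n + \mu_n}(L_n), \omega_{L \setminus L_n})$. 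The inductive hypothesis supplies $(n-1)$-dimensional hypercubes for each of these two complexes, and the cone construction glues them along a new hypercube direction.

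The main obstacle will be constructing the higher face maps coherently so that the glued object is a genuine hypercube. Stacking two inductively built hypercubes via a mapping cone produces an edge map in the new direction between each pair of corresponding vertices, but the resulting diagram will in general commute only up to higher chain homotopies on each $2$-, $3$-, \ldots face involving the new direction. Concretely, I expect to construct these higher homotopies geometrically by counting pseudoholomorphic $(k+2)$-gons in the relevant traceless character variety with Lagrangian boundary conditions prescribed by the sublinks defining each face, generalizing the bigon and triangle counts underlying the proof of Theorem~4. Verification of the hypercube relations then reduces to the standard $A_\infty$-type identity arising from the ends of $1$-dimensional moduli spaces of such polygons; combined with the inductive construction, this yields a coherent hypercube and hence the claimed spectral sequence.
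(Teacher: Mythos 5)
Your overall plan --- a hypercube indexed by $\{0,1\}^n$ with edges the $2$-handle maps, higher faces filled by pseudoholomorphic polygon counts, filtered by weight --- is the right shape, and the identification of the higher homotopies with $(k+2)$-gon maps is exactly what the proof requires. But your inductive gluing strategy has a genuine gap at the step ``the inductive hypothesis supplies $(n-1)$-dimensional hypercubes for each of these two complexes, and the cone construction glues them along a new hypercube direction.'' The surgery exact triangle presents $\CSI(Y,\omega_L)$ up to quasi-isomorphism as $\Cone(\phi)$ for a single chain map $\phi\colon\CSI(Y_{\lambda_n})\to\CSI(Y_{\lambda_n+\mu_n})$, and the inductive hypothesis gives quasi-isomorphisms from each of these to the totalizations $T_0$ and $T_1$ of $(n-1)$-cubes. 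This does \emph{not} hand you a hypercube map $T_0\to T_1$, let alone one that decomposes vertexwise into polygon-counting arrows and comes with homotopies filling all faces involving the new direction. To make the gluing go through you would first have to show that $\phi$ is itself realized by polygon counts (the content of Propositions~\ref{Phi0Triangle} and \ref{hRectangle}) and then prove coherence of all the new higher homotopies against the ones already present in $T_0$ and $T_1$, which is essentially as hard as building the whole cube at once.

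The paper sidesteps this by \emph{not} constructing the complex inductively: it fixes a single Heegaard $(2+m)$-multi-diagram subordinate to the link, defines all components $\partial_{v<u^1<\cdots<u^k<w}$ of the differential simultaneously as polygon counts, and proves $\bfD^2=0$ in one stroke. Note that the $A_\infty$ associativity relations alone are not enough for $\bfD^2=0$; one also needs the vanishing of the pure products $\mu_{k+1}(\Theta_{\eta(v)\eta(u^1)}\otimes\cdots\otimes\Theta_{\eta(u^k)\eta(w)})$, which in the paper is a separate moduli-space degeneration argument (this is the lemma your proposal is silently assuming). With the complex in hand, the convergence is then proved by an inductive ``dropping a component'' argument: one passes to the $v_1=-1$ face of the extended $\{-1,0,1\}^m$-complex, and shows the inclusion-induced map is a quasi-isomorphism by filtering by $\sum_{i\geq 2}|v_i|$ and invoking the Double Mapping Cone Lemma (Lemma~\ref{lem:double-cone}) together with the polygon-map identifications of $C\Phi_0$ and the nullhomotopy $h$. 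So induction does appear, but only in the convergence proof, applied to a complex that has already been constructed globally; the construction itself is not inductive.
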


As a special application of the above link surgeries spectral sequence, we establish a relationship between Khovanov homology and symplectic instanton homology.

\begin{thm}
For any link $L \subset S^3$, there is a spectral sequence with $E^2$-page isomorphic to $\Kh(m(L); \F_2)$ converging to $\SI(\Sigma(L))$. In particular, we have a rank inequality $\rk \Kh(m(L); \F_2) \geq \rk \SI(\Sigma(L))$.
\end{thm}

\textbf{Acknowledgements.} We thank Paul Kirk, Dylan Thurston, and Chris Woodward for useful conversations on various aspects of this work. This article consists of material which makes up part of the author's Ph.D thesis, written at Indiana University.
\section{Review of Symplectic Instanton Homology}

\label{chap:review}

Here we will recall the basic definitions and properties of the symplectic instanton homology groups $\SI(Y)$ established in \cite{horton1}.

\subsection{Traceless Character Varieties}

Suppose $M$ is a compact $3$-manifold (possibly with boundary) or a closed surface (without boundary) and let $T \subset M$ be a properly embedded codimension-$2$ submanifold of $M$. There is a meridian $\mu_i \in \pi_1(M \setminus T)$ associated to each connected component $T_i$ of $T$. We define the {\bf traceless $\SU(2)$-character variety} of $(M,T)$ to be the representation space
\[
	\scrR(M,T) = \{\rho: \pi_1(M \setminus T) \longrightarrow \SU(2) \mid \Tr(\rho(\mu_i)) = 0\}/\text{conjugation}.
\]
This is well-defined as each choice of meridian $\mu_i$ is unique up to conjugation. The traceless character varieties of punctured surfaces are given the special notation
\[
	\scrR_{g,n} = \scrR(\Sigma_g, \{n \text{ pts}\}).
\]
By fixing a standard basis for $\pi_1(\Sigma_g \setminus \{n \text{ pts}\})$, we obtain the {\bf holonomy description}
\[
	\scrR_{g,n} = \left.\left\{ A_1, \dots, A_g, B_1, \dots, B_g, C_1, \dots, C_n \in \SU(2) ~\left|~ \begin{array}{c} \displaystyle \prod_{j = 1}^g [A_j, B_j] \prod_{k = 1}^n C_k = I \\ \Tr(C_k) = 0 \end{array} \right\}\right/\text{conj.}\right.,
\]
which allows us to denote elements of $\scrR_{g,n}$ by $[A_1, \dots, A_g, B_1, \dots, B_g, C_1, \dots, C_n]$.

The essential features of the smooth and symplectic topology of $\scrR_{g,n}$ are as follows:
\begin{itemize}
	\item If $n$ is odd, then $\scrR_{g,n}$ is a smooth manifold of dimension $6g - 6 + 2n$.
	\item $\scrR_{g,n}$ is connected and simply connected.
	\item $\scrR_{0,3} = \{[\bfi, \bfj, -\bfk]\}$, a single point.
	\item There is a natural symplectic form $\omega_{g,n}$ on $\scrR_{g,n}$ (defined analogously to the classical Goldman symplectic form on $\scrR_{g,0}$, see \cite{goldman} and \cite{atiyah-bott}).
	\item $(\scrR_{g,n}, \omega_{g,n})$ is monotone with monotonicity constant $\tau = \tfrac{1}{4}$ and minimal Chern number $1$.
\end{itemize}

\subsection{The Symplectic Instanton Homology Groups}

Let $Y$ be a closed, oriented $3$-manifold and suppose we have a genus $g$ Heegaard splitting $Y = H_0 \cup_{\Sigma_g} H_1$. Fix a basepoint $x \in \Sigma_g \subset Y$, and in a small $3$-ball neighborhood of $x$ remove a standardly embedded $\theta$-graph with one vertex in each handlebody $H_i$ and each edge intersecting the Heegaard surface transversely in a single point (see Figure \ref{fig:tripod} for an illustration of the intersection of the complement of the $\theta$-graph in the $3$-ball with one of the handlebodies).

\begin{figure}[t]
	\centering
	\includegraphics[scale=1.5]{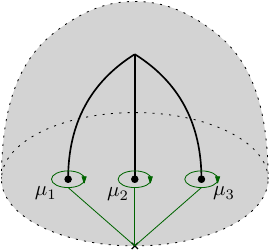}
	\caption{Half of the $\theta$-graph lying in one of the handlebodies $H_i$, with meridians labeled.}
	\label{fig:tripod}
\end{figure}

This setup gives us a diagram of inclusions of pairs and an induced diagram of traceless character varieties:
\[
	\xymatrix{ & (H_0, H_0 \cap \theta) \ar@{^(->}[dl] & & & & \scrR(H_0, H_0 \cap \theta) \ar@{_(->}[dr] &  \\
	(Y, \theta) & & (\Sigma_g, \Sigma_g \cap \theta) \ar@{_(->}[ul] \ar@{^(->}[dl] & \xrightarrow{~\scrR~}& \scrR(Y, \theta) \ar[ur] \ar[dr] & & \scrR_{g,3} \\
	 & (H_1, H_1 \cap \theta) \ar@{_(->}[ul] & & & & \scrR(H_1, H_1 \cap \theta) \ar@{^(->}[ur] & }
\]
Each $\scrR(H_i, H_i \cap \theta)$ is a copy of $(S^3)^g$ embedding in $(\scrR_{g,3}, \omega_{g,3})$ as a monotone Lagrangian submanifold (see \cite[Section 3]{horton1}). We write $L_i = \im(\scrR(H_i, H_i \cap \theta) \longrightarrow \scrR_{g,3})$. One has an identification
\[
	L_0 \cap L_1 \cong \Hom(\pi_1(Y), \SU(2)).
\]

The Lagrangian Floer homology
\[
	\SI(Y) = \HF(L_0, L_1)
\]
is the {\bf symplectic instanton homology} of $Y$. By \cite[Section 5]{horton1} it is a natural homeomorphism invariant of $(Y,x)$. The most important computations of $\SI(Y)$ for the purposes of this article are:
\begin{itemize}
	\item $\SI(S^3) \cong \Z$. Indeed, the Floer chain complex is generated by a single point (corresponding to the trivial representation $\pi_1(S^3) \longrightarrow \SU(2)$) and the differential is necessarily trivial.
	\item $\SI(S^2 \times S^1) \cong H^\ast(S^3)$ as an algebra, not just a group. The product structure on Floer homology is given as usual by the triangle product, and the corresponding product on $H^\ast(S^3)$ is the cup product. Concretely, $\SI(S^2 \times S^1)$ has two generators $\theta$ and $\Theta$ (corresponding to the trivial representation and the nontrivial central representation of $\pi_1(S^2 \times S^1)$ in $\SU(2)$, respectively) with $\Theta$ acting as the unit and $\theta$ squaring to zero.
\end{itemize}

We remark that while symplectic instanton homology can be defined with $\Z$ coefficients, we work with $\F_2$ coefficients throughout this article.

It is often convenient to encode a Heegaard splitting by a {\bf Heegaard diagram} $(\Sigma_g, \bfalpha, \bfbeta, z)$, where $\bfalpha$ is a $g$-tuple of curves linearly independent in $H_1(\Sigma_g)$ (similarly for $\beta$) and $z \in \Sigma_g$ is a basepoint. In a small disk neighborhood of $z$ one places three punctures with meridians $\mu_1, \mu_2, \mu_3$ and we have Lagrangians
\[
	L_\alpha = \left\{\rho: \pi_1(\Sigma_{g,3}) \longrightarrow \SU(2) \left| \begin{array}{ll} \rho(\alpha_k) = I, & k = 1, \dots, g \\ \rho(\mu_1\mu_2\mu_3) = I & \end{array} \right\} \right./\text{conjugation},
\]
\[
	L_\beta = \left\{\rho: \pi_1(\Sigma_{g,3}) \longrightarrow \SU(2) \left| \begin{array}{ll} \rho(\beta_k) = I, & k = 1, \dots, g \\ \rho(\mu_1\mu_2\mu_3) = I & \end{array} \right\} \right./\text{conjugation}.
\]
In terms of the previously defined Lagrangians we have $L_\alpha = L_0$ and $L_\beta = L_1$ if $H_0$ is the handlebody obtained by adding $2$-handles along the $\alpha$-curves and $H_1$ is the handlebody obtained by adding $2$-handles along the $\beta$-curves.

Finally, given a homology class $\omega \in H_1(Y; \Z/2)$, one may represent it by an embedded, possibly disconnected curve in the Heegaard surface, which we also denote $\omega$. We may then define a modified Lagrangian
\[
	L_\alpha^\omega = \left\{\rho: \pi_1(\Sigma_{g,3}) \longrightarrow \SU(2) \left| \begin{array}{ll} \rho(\alpha_k) = (-1)^{\alpha_k \cdot \omega}I, & k = 1, \dots, g \\ \rho(\mu_1\mu_2\mu_3) = I & \end{array} \right\} \right./\text{conjugation},
\]
where $\alpha_k \cdot \omega$ is the unoriented intersection number. We then define
\[
	\SI(Y,\omega) = \HF(L_\alpha^\omega, L_\beta).
\]
This variant of symplectic instanton homology corresponds to $\SO(3)$ representations of the fundamental group with second Stiefel-Whitney class Poincar\'e dual to $\omega$.
\section{The K\"unneth Principle for Triangle Maps}


Many of the results in this article rely on a technical result which we refer to as the ``K\"unneth principle,'' which allows us to ``localize'' certain naturally-occurring maps defined on the symplectic instanton homology of a connected sum to one of the connect summands. In this section we expand upon what exactly is meant by this.

\subsection{Relative Invariants of Pseudoholomorphic Quilts}

We start by reviewing how counting certain pseudoholomorphic maps defines chain maps between quilted Floer chain groups. We first must introduce some terminology.

A {\bf surface with strip-like ends} consists of the following data:
\begin{itemize}
	\item A surface with boundary $\overline{S}$, and an enumeration of the boundary components $\partial \overline{S} = C_1 \amalg \cdots \amalg C_m$.
	\item For each boundary component $C_k$, a finite set (possibly empty) of $d_k$ points $z_{k,1}, \dots,\allowbreak z_{k,d_k} \in C_k$, labelled cyclically according to the induced orientation of $C_k$. We denote the indexing set for all such marked points
	\[
		\calE(S) = \{e = (k, l) \mid 1 \leq k \leq m, 1 \leq l \leq d_k\}.
	\]
	We will write $e \pm 1 = (k, l \pm 1)$ to denote the next/previous label adjacent to $e$ with respect to the cyclic ordering. $I_{k,l} \subset C_k$ will denote the open arc in $C_k$ between $z_{k,l}$ and $z_{k,l+1}$ (or $I_{k,0} = C_k$ if $d_k = 0$).
	\item A complex structure $j$ on the punctured surface $S = \overline{S} \setminus \{z_e\}_{e \in \calE(S)}$.
	\item For each $e \in \calE(S)$, a set of embeddings
	\[
		\epsilon_{S,e}: \R^\pm \times [0,1] \longrightarrow S,
	\]
	where $\R^+ = (0,\infty)$ and $\R^- = (-\infty, 0)$ (a sign is chosen for each $e$, independent of the previous data), such that
	\[
		\lim_{t \to \pm \infty} \epsilon_{S,e}(t,s) = z_e
	\]
	and $\epsilon_{S,e}^\ast j_S$ is the canonical complex structure on the strip $\R^\pm \times [0,1]$. The $\epsilon_{S,e}$ are called {\bf strip-like ends} for $S$; ends of the form $\epsilon_{S,e}: (-\infty, 0) \times [0,1] \longrightarrow S$ are called {\bf incoming ends} and ends of the form $\epsilon_{S,e}: (0, \infty) \times [0,1] \longrightarrow S$ are called {\bf outgoing ends}. The set of labels has a natural partition $\calE(S) = \calE_-(S) \amalg \calE_+(S)$ depending on whether the corresponding label is associated to an incoming or outgoing end.
	\item Orderings of the sets of incoming and outgoing ends,
	\[
		\calE_-(S) = (e_1^-, \dots, e_{N_-}^-), \quad\quad \calE_+(S) = (e_1^+, \dots, e_{N_+}^+).
	\]
\end{itemize}

Surfaces $S$ with strip-like ends represent the domains of certain pseudoholomorphic curves with Lagrangian boundary conditions. Indeed, let $(M, \omega)$ be a closed, monotone symplectic manifold and suppose that $\mathbf{L} = \{L_e\}_{e \in \calE(S)}$ is a collection of pairwise transverse, simply connected (for simplicity), monotone Lagrangian submanifolds of $M$. We write $\calI_+(\mathbf{L})$ for the set of tuples of points $\mathbf{x}^+ = \{x_e^+ \in L_{e-1} \cap L_e\}_{e \in \calE_+(S)}$ and $\calI_-(\mathbf{L})$ for the set of tuples of points $\mathbf{x}^- = \{x_e^- \in L_e \cap L_{e-1}\}_{e \in \calE_-(S)}$. We may then define the moduli space $\calM_S(\bfx^-, \bfx^+)$, which consists of finite energy pseudoholomorphic maps $u: S \longrightarrow M$ satisfying the following boundary conditions and asymptotics:
\begin{itemize}
	\item $u(I_e) \subset L_e$ for all $e \in \calE(S)$.
	\item $\displaystyle \lim_{s \to \pm \infty} u(\epsilon_{S,e}(s,t)) = x_e^\pm$ for all $e \in \calE_\pm(S)$.
\end{itemize}
For generic almost complex structures on $(M, \omega)$, $\calM_S(\bfx^-, \bfx^+)$ is a smooth, oriented manifold whose zero-dimensional component $\calM_S(\bfx^-, \bfx^+)_0$ is a finite set of points. Hence the surface with strip-like ends $S$ determines a {\bf relative invariant} $\Phi_S$ in Floer homology defined on the chain level by
\[
	C\Phi_S: \bigotimes_{e \in \calE_-(S)} \CF(L_e, L_{e-1}) \longrightarrow \bigotimes_{e \in \calE_+(S)} \CF(L_{e-1}, L_e),
\]
\[
	C\Phi_S\left(\bigotimes_{e \in \calE_-(S)} x_e^-\right) = \sum_{\bfx^+ \in \calI_+(\mathbf{L})} \#\calM_S(\bfx^-,\bfx^+)_0 \bigotimes_{e \in \calE_+(S)} x_e^+.
\]

The above ideas extend to define relative invariants for \emph{quilted} Floer homology with just a little work. A {\bf quilted surface with strip-like ends} consists of the following data:
\begin{itemize}
	\item A collection of surfaces with strip-like ends, $\underline{S} = \{(S_k, j_k)\}_{k = 1}^m$, called the {\bf patches} of the quilt.
	\item A collection of {\bf seams} $\calS$, where a seam is a $2$-element set
	\[
		\sigma = \{(k_\sigma, e_\sigma), (k_\sigma^\prime, e_\sigma^\prime)\} \subset \bigcup_{k = 1}^m \{k\} \times \calE(S_k).
	\]
	The collection of seams is subject to the condition that $\sigma \cap \sigma' = \varnothing$ for any distinct $\sigma, \sigma' \in \calS$ (\emph{i.e.} all seams are pairwise disjoint).
	\item For each seam $\sigma \in \calS$, an identification of the corresponding pair of boundary arcs/ components,
	\[
		\varphi_\sigma: I_{k_\sigma, e_\sigma} \longrightarrow  I_{k_\sigma^\prime, e_\sigma^\prime}
	\]
	The $\{\varphi_\sigma\}_{\sigma \in \calS}$ are required to be compatible with the strip-like ends, meaning the following:
	\begin{itemize}
		\item Both $e_\sigma$ and $e_\sigma^\prime - 1$ are incoming and $\varphi_\sigma(\epsilon_{k_\sigma,e_\sigma}(t,0)) = \epsilon_{k_\sigma^\prime,e_\sigma^\prime - 1}(t,1)$, \emph{or} they are both outgoing and $\varphi_\sigma(\epsilon_{k_\sigma,e_\sigma}(t,1)) = \epsilon_{k_\sigma^\prime,e_\sigma^\prime - 1}(t,0)$.
		\item Both $e_\sigma - 1$ and $e_\sigma^\prime$ are incoming and $\varphi_\sigma(\epsilon_{k_\sigma,e_\sigma-1}(t,1)) = \epsilon_{k_\sigma^\prime,e_\sigma^\prime}(t,0)$, \emph{or} they are both outgoing and $\varphi_\sigma(\epsilon_{k_\sigma,e_\sigma-1}(t,0)) = \epsilon_{k_\sigma^\prime,e_\sigma^\prime}(t,1)$.
	\end{itemize}
	\item An {\bf end} for $\underline{S}$ is a maximal sequence of ends $\underline{e} = \{(k_i, e_i)\}$ such that $\epsilon_{k_i,e_i}(\cdot,1) = \epsilon_{k_{i+1},e_{i+1}}(\cdot,0)$ and $\{(k_i, e_i), (k_{i+1}, e_{i+1})\}$ is a seam for each $i$. As part of the data of $\underline{S}$ we require orderings of the sets of such incoming and outgoing ends,
	\[
		\calE_-(\underline{S}) = (\underline{e}_1^-, \dots, \underline{e}_{N_-(\underline{S})}^-), \quad\quad \calE_+(\underline{S}) = (\underline{e}_1^+, \dots, \underline{e}_{N_+(\underline{S})}^+).
	\]
\end{itemize}
The {\bf true boundary components} of $\underline{S}$ are the boundary components not included in the seams, which are indexed by
\[
	\calB(\underline{S}) = \left.\left(\bigcup_{k = 1}^m \{k\} \times \calE(S_k)\right)\right\backslash \left(\bigcup_{\sigma \in \calS(\underline{S})} \sigma\right).
\]

The precise definition given above is a bit cumbersome, and an intuitive description with a picture is helpful. Figure \ref{fig:quilt} gives a picture of a quilted surface with strip-like ends, along with Lagrangian boundary conditions (defined in the next paragraph).

\begin{figure}[h]
	\centering
	\includegraphics[scale=.15]{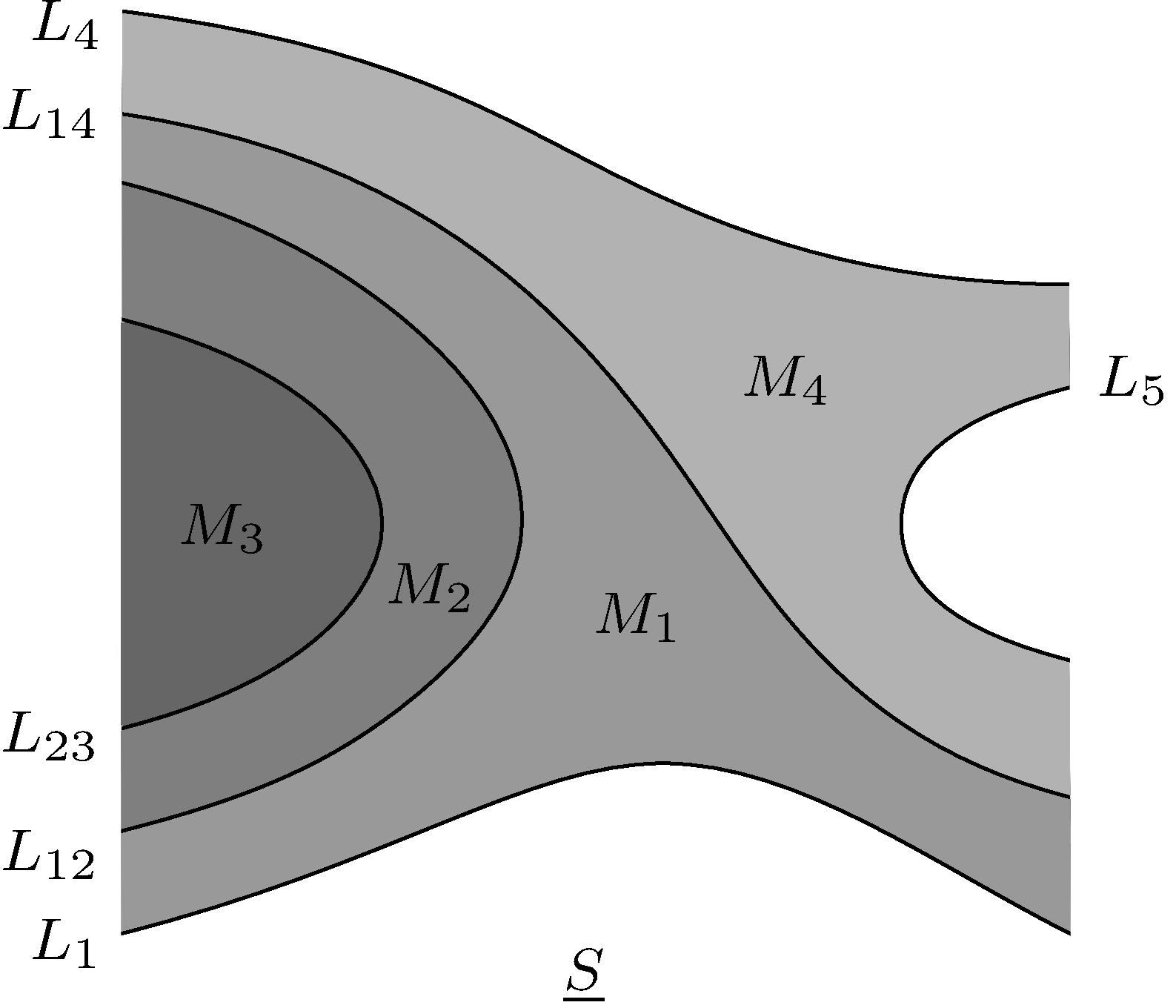}
	\caption{A quilted surface with $4$ patches, $3$ seams, $3$ true boundary components, $1$ negative end, and $2$ positive ends.}
	\label{fig:quilt}
\end{figure}

Quilted surfaces $\underline{S}$ with strip-like ends represent the domains of \emph{tuples} pseudoholomorphic curves with Lagrangian boundary conditions and certain compatibility conditions determined by the seams. Let $(M_1, \omega_1), \dots, (M_m, \omega_m)$ be a collection of closed, monotone symplectic manifolds, one for each patch of $\underline{S}$. Suppose that
\[
	\mathbf{L} = \{L_{(k_\sigma,e_\sigma),(k_\sigma^\prime,e_\sigma^\prime)} \subset M_{k_\sigma}^- \times M_{k_\sigma^\prime}\}_{\sigma \in \calS(\underline{S})} \cup \{L_{(k,e)} \subset M_k\}_{(k,e) \in \calB(\underline{S})}
\]
is a collection of pairwise transverse, simply connected (for simplicity), monotone Lagrangian correspondences/submanifolds. We may then define the moduli space $\calM_{\underline{S}}(\bfx^-, \bfx^+)$, which consists of $k$-tuples of finite energy pseudoholomorphic maps $u_j: S_j \longrightarrow M_j$ satisfying the following boundary conditions, asymptotics, and seam conditions:
\begin{itemize}
	\item $u(I_{(k,e)}) \subset L_{(k,e)}$ for all $(k,e) \in \calB(\underline{S})$.
	\item $\displaystyle \lim_{s \to \pm \infty} u_{k_i}(\epsilon_{k_i,e_i}(s,t)) = x_{(k_i,e_i)}^\pm$ for all $\underline{e} = \{(k_i,e_i)\}_{i = 1}^{n_{\underline{e}}} \in \calE_\pm(\underline{S})$.
	\item $(u_{k_\sigma}, u_{k_\sigma}^\prime \circ \varphi_\sigma)(I_{k_\sigma,e_\sigma}) \subset L_{(k_\sigma,e_\sigma),(k_\sigma^\prime,e_\sigma^\prime)}$ for all $\sigma \in \calS(\underline{S})$.
\end{itemize}
Again, for generic almost complex structures on $(M_1, \omega_1), \dots, (M_m, \omega_m)$, $\calM_{\underline{S}}(\bfx^-, \bfx^+)$ is a smooth, oriented manifold whose zero-dimensional component $\calM_{\underline{S}}(\bfx^-, \bfx^+)_0$ is a finite set of points. Hence the quilted surface with strip-like ends $\underline{S}$ determines a {\bf relative invariant} $\Phi_{\underline{S}}$ in quilted Floer homology defined on the chain level by
\[
	C\Phi_{\underline{S}}: \bigotimes_{\underline{e} \in \calE_-(\underline{S})} \CF(\underline{L}_{\underline{e}}) \longrightarrow \bigotimes_{\underline{e} \in \calE_+(\underline{S})} \CF(\underline{L}_{\underline{e}}),
\]
\[
	C\Phi_{\underline{S}}\left(\bigotimes_{\underline{e} \in \calE_-(\underline{S})} \underline{x}_{\underline{e}}^-\right) = \sum_{\bfx^+ \in \calI_+(\mathbf{L})} \#\calM_{\underline{S}}(\bfx^-,\bfx^+)_0 \bigotimes_{\underline{e} \in \calE_+(\underline{S})} \underline{x}_{\underline{e}}^+.
\]

\subsection{Relative Invariants and Geometric Composition}

Something that will be important for us is the behavior of relative invariants $\Phi_{\underline{S}}$ under geometric composition of Lagrangian correspondences on adjacent seams. More precisely, let $\underline{S}$ be a quilted surface with strip-like ends such that some patch $S_k$ is diffeomorphic to a strip $\R \times [0,1]$. Let the seams of the patch $S_k$ be denoted $\{(l_-,f_-),(k,e_-)\}$ and $\{(k,e_+),(l_+,f_+)\}$; one of these may possibly be a true boundary component. Suppose $\mathbf{L}$ is a collection of Lagrangian boundary conditions for $\underline{S}$, and suppose that the correspondences
\[
	L_- = L_{(l_-,f_-),(k,e_-)} \subset M_{l_-}^- \times M_k, \quad\quad L_+ = L_{(k,e_+),(l_+,f_+)} \subset M_k^- \times M_{l_+}
\]
have embedded geometric composition $L_- \circ L_+$. Write $\underline{S}'$ for the quilted surface with strip-like ends obtained from $\underline{S}$ by removing the strip $S_k$ and replacing it with the seam $\{(l_-,f_-),(l_+,f_+)\}$. Take Lagrangian boundary conditions $\mathbf{L}'$ for $\underline{S}'$ which are identical to $\mathbf{L}$ for the seams and boundary components that $\underline{S}$ and $\underline{S}'$ have in common, and $L_{(l_-,f_-),(l_+,f_+)} = L_- \circ L_+$. The process of obtaining $(\underline{S}', \mathbf{L}')$ from $(\underline{S}, \mathbf{L})$ is referred to as {\bf strip shrinking}.

\begin{thm}
\textup{(Strip Shrinking for Quilted Surfaces)} For quilted surfaces $\underline{S}$ and $\underline{S}'$ as described above, there is a $\delta > 0$ such that if the width of the strip $S_k$ is less than $\delta$ (with respect to the conformal structure on the domain), then there is an identification $\calM(\underline{S})_0 = \calM(\underline{S}')_0$ via strip shrinking, and the induced isomorphisms $\Psi_{\underline{e}}: \CF(\underline{L}_{\underline{e}}) \longrightarrow \CF(\underline{L}_{\underline{e}}^\prime)$ intertwine the relative invariants of the quilted surfaces, up to a degree shift:
\[
	\Phi_{\underline{S}^\prime} \circ \left( \bigotimes_{\underline{e} \in \mathcal{E}_-(\underline{S})} \Psi_{\underline{e}} \right) = \left( \bigotimes_{\underline{e} \in \mathcal{E}_+(\underline{S}')} \Psi_{\underline{e}} \right) \circ \Phi_{\underline{S}}[d_k n_k].
\]
Here $d_k$ is the number of incoming ends of $S_k$ minus the number of outgoing ends of $S_k$, and $n_k$ is half the dimension of the symplectic manifold $M_k$ associated to $S_k$.
\label{thm:stripshrinking}
\end{thm}

\begin{figure}[h]
	\centering
	\includegraphics[scale=.125]{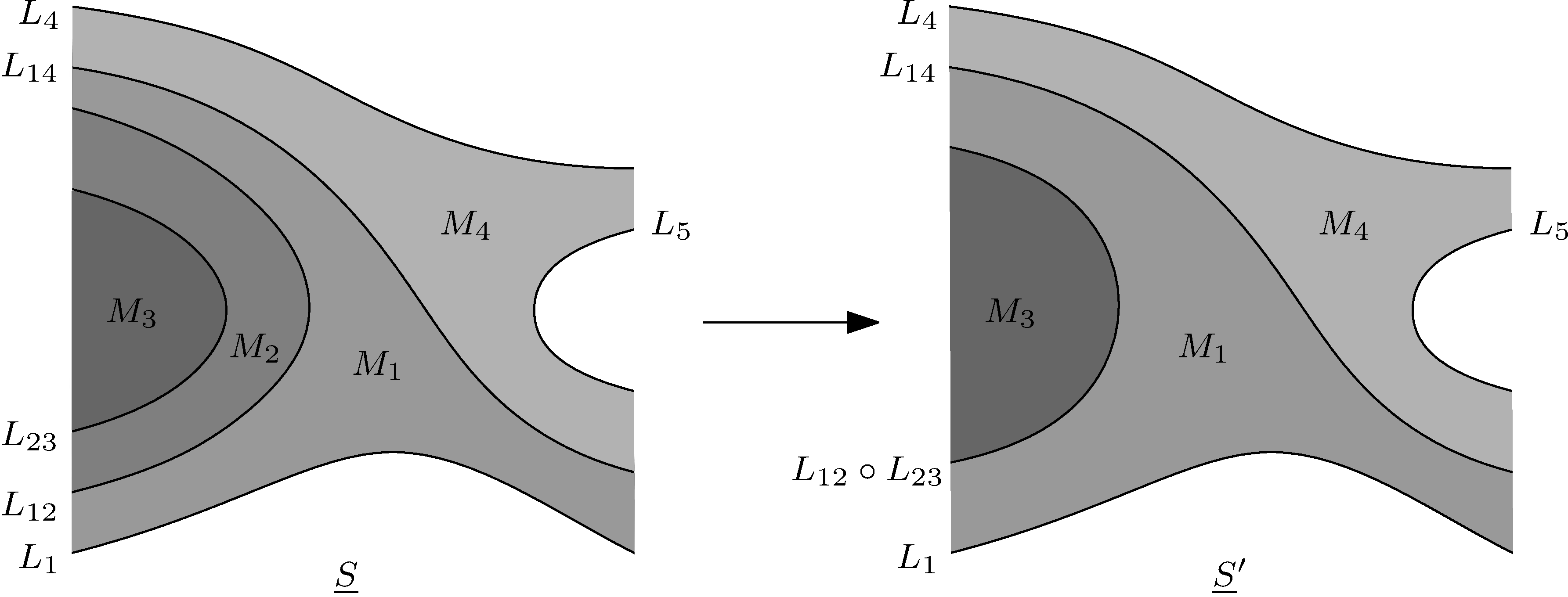}
	\caption{The process of strip shrinking in a quilted surface with strip-like ends.}
	\label{fig:stripshrinking}
\end{figure}

\subsection{Connected Sums Revisited}


We first make a general definition. Suppose we have two sequences of Lagrangian correspondences
\[
	M_0 \xrightarrow{~L_0~} M_{01} \xrightarrow{~L_1~} M_1, \quad\quad M_0 \xrightarrow{~L_0^\prime~} M_{01}^\prime \xrightarrow{~L_1^\prime~} M_1,
\]
each with embedded composition, satisfying
\begin{equation}
	\label{eqn:equal-comp}
	L_0 \circ L_1 = L_0^\prime \circ L_1^\prime.
\end{equation}
We then have a strip-shrinking map
\[
	\Phi_{L_0, L_1}: \CF(L_0, L_1) \longrightarrow \CF(L_0 \circ L_1)
\]
and a ``reverse'' strip-shrinking map
\[
	\Psi_{L_0^\prime, L_1^\prime}: \CF(L_0^\prime \circ L_1^\prime) \longrightarrow \CF(L_0^\prime, L_1^\prime).
\]
In light of the condition (\ref{eqn:equal-comp}), these two maps are composable, and we define the {\bf seam swap} map $\mathfrak{S}^{L_0,L_1}_{L_0^\prime,L_1^\prime}$ by 
\[
	\mathfrak{S}^{L_0,L_1}_{L_0^\prime,L_1^\prime} = \Psi_{L_0^\prime,L_1^\prime} \circ \Phi_{L_0,L_1}: \CF(L_0, L_1) \longrightarrow \CF(L_0^\prime, L_1^\prime).
\]
By Theorem \ref{thm:stripshrinking}, the seam swap map is a chain isomorphism that respects relative invariants of quilted surfaces.

The proof that $\CSI(Y \# Y') \cong \CSI(Y) \otimes \CSI(Y')$ in \cite[Theorem 9.1]{horton1} can in fact be rephrased in terms of these seam swap maps. Given Heegaard diagrams $(\Sigma_g, \bfalpha, \bfbeta)$ for $Y$ and $(\Sigma_{g^\prime},\bfalpha',\bfbeta')$ for $Y'$, we have a Heegaard diagram $(\Sigma_{g+g^\prime}, \bfalpha \cup \bfalpha', \bfbeta \cup \bfbeta')$ for $Y \# Y'$. Consider the Lagrangian correspondences
\[
	L_\alpha = \left\{ [\rho] \in \scrR_{g,3} \left| \begin{array}{ll} \rho(\alpha_k) = I, & k = 1, \dots, g \\ \rho(\mu_1\mu_2\mu_3) = I & \end{array} \right\}\right.,
\]
\[
	L_{\alpha^\prime} = \left\{ [\rho] \in \scrR_{g^\prime,3} \left| \begin{array}{ll} \rho(\alpha_k^\prime) = I, & k = 1, \dots, g^\prime \\ \rho(\mu_1\mu_2\mu_3) = I & \end{array} \right\}\right.,
\]
\[
	L_{\alpha^\prime}^\prime = \left\{ ([\rho_1], [\rho_2]) \in \scrR_{g,3} \times \scrR_{g+g^\prime,3} \left| \begin{array}{ll} \rho_1(\alpha_k) = \rho_2(\alpha_k), & k = 1, \dots, g \\ \rho_1(\beta_k) = \rho_2(\beta_k), & k = 1, \dots, g \\ \rho_1(\mu_k) = \rho_2(\mu_k), & k = 1, 2, 3 \\ \rho_2(\alpha^\prime_k) = I, & k = 1, \dots, g' \end{array} \right\} \right.,
\]
\[
	L_{\alpha}^\prime = \left\{ ([\rho_1], [\rho_2]) \in \scrR_{g^\prime,3} \times \scrR_{g+g^\prime,3} \left| \begin{array}{ll} \rho_1(\alpha_k^\prime) = \rho_2(\alpha_k^\prime), & k = 1, \dots, g^\prime \\ \rho_1(\beta_k^\prime) = \rho_2(\beta_k^\prime), & k = 1, \dots, g^\prime \\ \rho_1(\mu_k) = \rho_2(\mu_k), & k = 1, 2, 3 \\ \rho_2(\alpha_k) = I, & k = 1, \dots, g \end{array} \right\} \right.,
\]
and similarly define $L_\beta$, $L_{\beta^\prime}$, $L_{\beta^\prime}^\prime$, and $L_{\beta}^\prime$; these just correspond to attaching handles in different orders. The key observation is that
\[
	L_\alpha \circ L_{\alpha^\prime}^\prime = L_{\alpha \cup \alpha^\prime} = L_{\alpha^\prime} \circ L_{\alpha}^\prime,
\]
\[
	L_\beta \circ L_{\beta^\prime}^\prime = L_{\beta \cup \beta^\prime} = L_{\beta^\prime} \circ L_{\beta}^\prime,
\]
and
\[
	L_{\alpha^\prime}^\prime \circ L_\beta^\prime = L_\beta \circ L_{\alpha^\prime}.
\]
Therefore we may use seam swap maps to see that
\begin{align*}
	\CF(L_{\alpha\cup\alpha^\prime},L_{\beta\cup\beta^\prime}) & \cong \CF(L_\alpha, L_{\alpha^\prime}^\prime, L_{\beta}^\prime, L_{\beta^\prime}) \\
	& \cong \CF(L_\alpha, L_{\beta}, L_{\alpha^\prime}, L_{\beta^\prime}) \\
	& \cong \CF(L_\alpha, L_\beta) \otimes \CF(L_{\alpha^\prime}, L_{\beta^\prime}),
\end{align*}
where the last identification is due to the fact that $\scrR_{0,3} = \text{pt}$. This same use of seam swap maps will appear in the proof of the K\"unneth principle.


\subsection{On $(\#^n S^3) \# (\#^{g-n} S^2 \times S^1)$}

\label{sect:S3-S2xS1}

Let $\calH = (\Sigma_g, \bfbeta, \bfgamma)$ denote the standard Heegaard diagram for $(\#^n S^3) \# (\#^{g-n} S^2 \times S^1)$, with the first $n$ $\beta$- and $\gamma$-curves corresponding to the $S^3$ summands. Write $L_\beta^\prime$ (resp. $L_\gamma^\prime$) for the Lagrangian in $\scrR_{n,3}$ corresponding to the first $n$ $\beta$- (resp. $\gamma$-) handle attachments. Let $\underline{L}_{\beta\gamma}$ denote the Lagrangian correspondence $\scrR_{n, 3} \longrightarrow \scrR_{g,3}$ corresponding to attaching the remaining $g - n$ handles (which are the same for $\beta$ and $\gamma$).

For the Heegaard diagram $\calH$ above, we have $L_\beta \cap L_{\gamma} \cong (S^3)^{g-n}$ and this intersection is clean. There is a Hamiltonian isotopy $\varphi$ of $\scrR_{g,3}$ taking $L_\gamma$ to another Lagrangian $\tilde{L}_\gamma$ such that $L_\beta \cap \tilde{L}_\gamma$ consists of $2^{g-n}$ points, and in fact this isotopy can be chosen so that
\[
	L_\beta \cap \tilde{L}_\gamma = \{[I, \dots, I, I, (-1)^{\epsilon_1}I, \dots, (-1)^{\epsilon_{g-n-1}} I, (-1)^{\epsilon_{g-n}} I] \in \scrR_{g,3} \mid (\epsilon_1, \dots, \epsilon_{g-n}) \in \{0,1\}^{g-n}\}.
\]
Under the isomorphism of unital algebras $\CF(L_\beta, \tilde{L}_\gamma) \cong \SI(\#^{g-n} S^2 \times S^1) \cong H^{3 - \ast}(S^3)^{\otimes (g - n)}$, the element $\Theta_{\beta\gamma} \in \CF(L_\beta, \tilde{L}_\gamma)$ corresponding to the intersection point with $(\epsilon_1, \dots, \epsilon_{g-n}) = (1, \dots, 1)$ maps to the unit of $H^{3 - \ast}(S^3)^{\otimes (g - n)}$.

\begin{figure}
	\centering
	\includegraphics[scale=1.25]{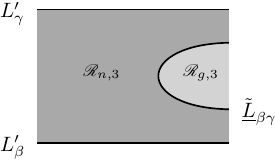}
	\caption{The quilt defining the map $\CF(L_\beta^\prime, L_\gamma^\prime) \longrightarrow \CF(L_\beta^\prime, \underline{L}_{\beta\gamma}^T, \underline{L}_{\beta\gamma}, L_\gamma^\prime): \theta \mapsto \theta \times \Theta_{\beta\gamma}$.}
	\label{fig:S3-S2xS1}
\end{figure}

Now, write $\tilde{\underline{L}}_{\beta\gamma}$ for the image of the Lagrangian correspondence $\underline{L}_{\beta\gamma}$ under the Hamiltonian isotopy $\id \times \varphi$ of $\scrR_{n,3} \times \scrR_{g,3}$ (with $\varphi$ as in the previous paragraph), and consider the quilt map pictured in Figure \ref{fig:S3-S2xS1}. Because $\Theta_{\beta\gamma}$ is the unit of $\CF(L_\beta, \tilde{L}_\gamma) \cong \CF(L_\beta^\prime, \underline{L}_{\beta\gamma}^T, \underline{L}_{\beta\gamma}, L_\gamma^\prime)$, it is easy to see that the relative invariant defined by this quilt is simply the map $\theta \mapsto \theta \times \Theta_{\beta\gamma}$, where $\theta$ is the generator of $\CF(L_\beta^\prime, L_\gamma^\prime) \cong \Z$.

More generally, if $L_\beta^\prime \cap L_\gamma^\prime = \{[\rho_0]\}$ for some representation $\rho_0$ (not necessarily the trivial representation), then the quilt map of Figure \ref{fig:S3-S2xS1} is given by $\rho_0 \mapsto \rho_0 \times \Theta_{\rho_0}$, where in this case we define $\Theta_{\rho_0}$ similarly to $\Theta_{\beta\gamma}$, except that on $\Sigma_n \setminus \text{pt} \subset \Sigma_g$, it should agree with $\rho_0$ instead of the trivial representation.

\subsection{The K\"unneth Principle}

We are now in a position to precisely state and prove the so-called K\"unneth principle for triangles.

Let $L_\beta$, $\tilde{L}_\gamma$, $L_\beta^\prime$, $L_\gamma^\prime$, and $\tilde{\underline{L}}_{\beta\gamma}$ be as in the previous subsection, and suppose we have another set of attaching curves in $\Sigma_g$, $\bfalpha$, with associated Lagrangian $L_\alpha$. We furthermore assume that $\bfalpha = (\alpha_1, \dots, \alpha_g)$ is ordered such that each of $\alpha_1, \dots, \alpha_n$ lie in the first $n$ handles of $\Sigma_g$, and each of $\alpha_{n+1}, \dots, \alpha_g$ lie in the last $g-n$ handles of $\Sigma_g$, so that the $\alpha$-curves naturally respect the direct sum decomposition $\Sigma_g = \Sigma_n \# \Sigma_{g-n}$.

We may define several more Lagrangian correspondences from our various sets of attaching curves, depending on what order we make the handle attachments in:
\[
	L_{\alpha;g-n}^\prime: \scrR_{0,3} \longrightarrow \scrR_{g-n,3},
\]
\[
	L_{\alpha;n}^\prime: \scrR_{0,3} \longrightarrow \scrR_{n,3},
\]
\[
	L_{\alpha;n}^{\prime\prime}: \scrR_{g-n,3} \longrightarrow \scrR_{g,3},
\]
\[
	\tilde{\underline{L}}_{\beta\gamma}^\prime: \scrR_{g-n,3} \longrightarrow \scrR_{0,3},
\]

\begin{thm}
\label{thm:kunneth-triangle}
\textup{(K\"unneth Principle for Triangle Maps)} The triangle map
\[
	\mu_2^{\alpha\beta\gamma}(\cdot, \Theta_{\beta\gamma}): \CF(L_\alpha, L_\beta) \longrightarrow \CF(L_\alpha, \tilde{L}_\gamma)
\]
corrresponds exactly to the map
\[
	\id \otimes \mu_2^{\alpha\beta\gamma}(\cdot, \theta): \CF(L_{\alpha;g-n}^{\prime}, \tilde{L}_{\beta\gamma}^{\prime}) \otimes \CF(L_{\alpha;n}^\prime, L_\beta^\prime) \longrightarrow \CF(L_{\alpha;g-n}^{\prime}, \tilde{L}_{\beta\gamma}^{\prime}) \otimes \CF(L_{\alpha;n}^\prime, L_\gamma^\prime).
\]
\end{thm}

\begin{proof}
We start by manipulating the triangles counted by $\mu_2^{\alpha\beta\gamma}(\cdot, \Theta_{\beta\gamma})$ as suggested by Figure \ref{fig:kunneth-principle}.

\begin{figure}
	\centering
	\includegraphics{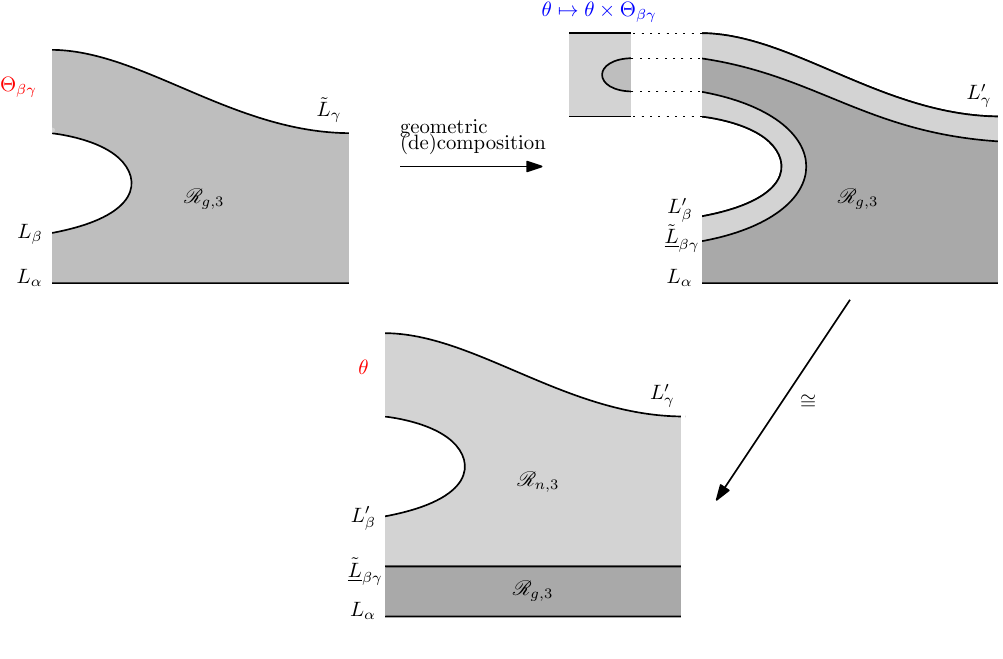}
	\caption{Quilt manipulations used to derive the K\"unneth principle for ``local'' computation of triangle maps.}
	\label{fig:kunneth-principle}
\end{figure}

Then perform a reverse strip-shrinking to convert $L_\alpha = L_{\alpha;g-n}^\prime \circ L_{\alpha;n}^{\prime\prime}$ into $(L_{\alpha;g-n}^\prime, L_{\alpha;n}^{\prime\prime})$. It is easy to see that $L_{\alpha;n}^{\prime\prime} \circ \tilde{\underline{L}}_{\beta\gamma} = \tilde{\underline{L}}_{\beta\gamma}^\prime \circ L_{\alpha;n}^\prime$, so that we may further apply a seam swap $\mathfrak{S}^{L_{\alpha;n}^{\prime\prime},\tilde{\underline{L}}_{\beta\gamma}}_{\tilde{\underline{L}}_{\beta\gamma}^\prime, L_{\alpha;n}^\prime}$. Since (reverse) strip-shrinking and seam swaps identify relative invariants of quilted surfaces, we see that the triangles counted by $\mu_2^{\alpha\beta\gamma}(\cdot, \Theta_{\beta\gamma})$ correspond to the relative invariant of the surface in Figure \ref{fig:kunneth-principle2}. Since $\scrR_{0,3} = \text{pt}$, this relative invariant is clearly $\id \otimes \mu_2^{\alpha^\prime \beta^\prime \gamma^\prime}(\cdot, \theta)$, so we are done.
\end{proof}

\begin{figure}
	\centering
	\includegraphics{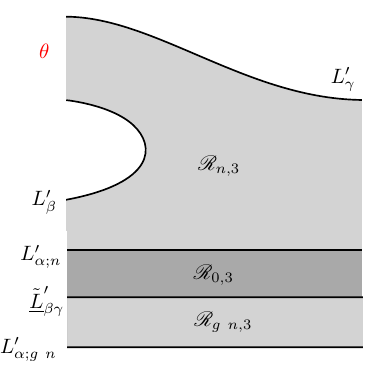}
	\caption{Result of reverse strip-shrinking and seam swapping.}
	\label{fig:kunneth-principle2}
\end{figure}

There is also a straightforward generalization of the K\"unneth Principle to certain polygon maps, which we state later as Theorem \ref{thm:kunneth-polygon} when the correct context has been established.
\section{Cobordisms and Functoriality}

\label{sect:cobordisms}

It is well-known that $3$- and $4$-manifold invariants which are gauge-theoretic in nature should fit into the framework of a topological quantum field theory. In this chapter, we explore part of that philosophy by defining maps between symplectic instanton groups induced by compact, connected cobordisms between closed, connected, oriented $3$-manifolds. We follow the approach of Ozsv\'ath-Szab\'o \cite{oz-sz-4mfd} and define the cobordism maps via handle decompositions and triangle maps. In this chapter, all homologies are taken with $\F_2$ coefficients.

\subsection{$1$-Handle Cobordisms}

Let $W_1: Y \longrightarrow Y'$ be a $4$-dimensional oriented cobordism corresponding to attaching a $1$-handle to $Y$. Then $Y' \cong Y \# (S^2 \times S^1)$. Fix a Heegaard diagram $\calH$ for $Y$ and consider the Heegaard diagram $\calH'$ for $Y' \cong Y \# (S^2 \times S^1)$ given by taking a connected sum of the diagram $\calH$ with the standard genus $1$ diagram $\calH_0$ for $S^2 \times S^1$. Then
\[
	\CSI(\calH') \cong \CSI(\calH) \otimes \CSI(\calH_0) \cong \CSI(\calH) \otimes H^\ast(S^3),
\]
by the K\"unneth principle and \cite[Proposition 9.5]{horton1}. Let $\Theta \in \CSI(\calH_0)$ be the intersection point corresponding to the generator of $H^0(S^3)$ (according to the isomorphism of \cite[Proposition 9.5]{horton1}); in particular, $\Theta$ is the unit for the triangle product on $\CSI(\calH_0)$. Then we define the chain map $\CSI(W_1)$ induced by the $1$-handle cobordism $W_1$ by
\[
	\CSI(W_1): \CSI(\calH) \longrightarrow \CSI(\calH'),
\]
\[
	\CSI(W_1)(\xi) = \xi \otimes \Theta.
\]
Write $\SI(W_1): \SI(Y) \longrightarrow \SI(Y')$ for the induced map in homology. $\SI(W_1)$ can equivalently be described as the relative invariant of the surface with strip-like ends in Figure \ref{fig:1-handle}.

\begin{figure}[h]
\begin{center}
	\includegraphics[scale=1.2]{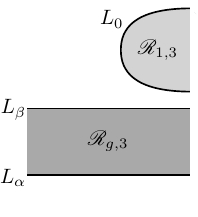}
	\caption{Surface with strip-like ends whose relative invariant is the $1$-handle map $\SI(W_1): \SI(Y) \longrightarrow \SI(Y) \otimes \SI(S^2 \times S^1)$.}
	\label{fig:1-handle}
\end{center}
\end{figure}

The first main result of this section is as follows:

\begin{thm}
\label{thm:1handlemap}
The $1$-handle attachment map $\SI(W_1)$ depends only on $Y$ in the following sense. If $\calH_1$ and $\calH_2$ are two pointed Heegaard diagrams for $Y$ differing by a Heegaard move, then there is a commutative diagram
\[
\xymatrix@C=2cm{
	\SI(\calH_1) \ar[r]^{\SI(W_1)} \ar[d]_{\Psi} & \SI(\calH_1^\prime) \ar[d]^{\Psi'} \\
	\SI(\calH_2) \ar[r]_{\SI(W_1)} & \SI(\calH_2^\prime)
}
\]
where $\Psi$ (respectively $\Psi'$) are the isomorphisms of symplectic instanton homologies induced by the Heegaard move (as in \cite[Section 5]{horton1}).
\end{thm}

\begin{proof}
Since isotopies of attaching curves and handleslides induce the identity on symplectic instanton homology, we need only check the result for when $\calH_2$ is obtained from $\calH_1$ via stabilization. In terms of quilted Floer homology, the relevant diagram we want to commute is
\[
\xymatrix{
	\HF(L_{\alpha_1}, L_{\beta_1}) \ar[d]_{\Psi} \ar[r] & \HF(L_{\alpha_1^\prime}, L_{\beta_2^\prime}) \ar[d]^{\Psi} \\
	\HF(L_{\alpha_1}, L_{\alpha_1\alpha_2}, L_{\beta_2\beta_1}, L_{\beta_1}) \ar[r] & \HF(L_{\alpha_1^\prime}, L_{\alpha_1^\prime\alpha_2^\prime}, L_{\beta_2^\prime\beta_2^\prime}, L_{\beta_2^\prime})
}
\]
where the vertical maps are inverses of strip shrinking maps and the horizontal maps are relative invariants of (quilted) triangles. By Theorem \ref{thm:stripshrinking}, this square commutes (without any grading shift since the strip we shrink has one incoming and one outgoing end).
\end{proof}

This verifies that a cobordism consisting of a single $1$-handle induces a map on symplectic instanton homology independent of the choice of Heegaard diagram. When $W_1: Y \longrightarrow Y'$ consists of $n$ $1$-handles $H_1, \dots, H_n$, we have that $Y' \cong Y \# n(S^2 \times S^1)$. We may then take $\calH' = \calH \# \calH_0 \# \cdots \# \calH_0$ as a Heegaard diagram for $Y'$, and identify
\[
	\CSI_\ast(\calH') \cong \CSI_\ast(\calH) \otimes H^{3-\ast}(S^3)^{\otimes n}.
\]
In this case, the chain map induced by the $1$-handle cobordism is
\[
	\CSI(W_1): \CSI(\calH) \longrightarrow \CSI(\calH'),
\]
\[
	\CSI(W_1)(\xi) = \xi \otimes \Theta^{\otimes n},
\]
and we again denote the induced map on homology by $\SI(W_1): \SI(Y) \longrightarrow \SI(Y')$. We should think of $\SI(W_1)$ as a composition of maps for adding a single $1$-handle; as such, we should check that it does not depend on the order in which the handles are added. Furthermore, we would like to verify that $\SI(W_1)$ is actually an invariant of $W_1$, so we should make sure it is also invariant under handleslides of the $1$-handles.

\begin{thm}
The $1$-handle cobordism map $\SI(W_1): \SI(Y) \longrightarrow \SI(Y')$ is invariant under the reordering of the $1$-handles of $W$ and handleslides among them.
\end{thm}

\begin{proof}
Invariance of the ordering is clear, since $\calH'$ is the same Heegaard diagram no matter what order the handles are added in. Handleslides of the $1$-handles in $W_1$ do not affect $\calH'$ either, so $\SI(W_1)$ is left unchanged.
\end{proof}

\subsection{$3$-Handle Cobordisms}

If $W_3: Y' \longrightarrow Y$ is a cobordism induced by attaching a single $3$-handle along some non-separating $2$-sphere in $Y'$, then $Y' \cong Y \# (S^2 \times S^1)$. There is a compatible Heegaard diagram for $Y'$ induced by the attaching $2$-sphere:

\begin{prop}
\label{prop:3handlediagram}
\textup{(Lemma 4.11 of \cite{oz-sz-4mfd})} A non-separating $2$-sphere in a $3$-manifold $Y'$ induces a split Heegaard diagram $\calH' = \calH \# \calH_0$ for $Y'$, where $\calH$ is a Heegaard diagram for the result of surgery along the $2$-sphere and $\calH_0$ is the standard diagram for $S^2 \times S^1$.
\end{prop}

Using such a diagram, we may define on the chain level a map induced by addition of a single $3$-handle:
\[
	\CSI(W_3): \CSI(\calH \# \calH_0) \longrightarrow \CSI(\calH),
\]
\[
	\CSI(W_3)(\xi \otimes \eta) = \begin{cases} \xi, & \text{if $\eta$ is the non-unit generator $\theta \in \CSI_\ast(\calH_0) \cong H^{3-\ast}(S^3)$,} \\ 0, & \text{otherwise.} \end{cases}
\]
$\SI(W_3): \SI(Y') \longrightarrow \SI(Y)$ will denote the induced map on homology. $\SI(W_3)$ can equivalently be described as the relative invariant of the surface with strip-like ends in Figure \ref{fig:3-handle}.

\begin{figure}[h]
\begin{center}
	\includegraphics[scale=1.2]{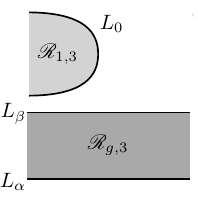}
	\caption{Surface with strip-like ends whose relative invariant is the $3$-handle map $\SI(W_3): \SI(Y) \otimes \SI(S^2 \times S^1) \longrightarrow \SI(Y)$.}
	\label{fig:3-handle}
\end{center}
\end{figure}

By an argument similar to that of Theorem \ref{thm:1handlemap}, we have the following:

\begin{thm}
The $3$-handle attachment map $\SI(W_3)$ depends only on $Y$ in the following sense. If $\calH_1$ and $\calH_2$ are two pointed Heegaard diagrams for $Y$ differing by a Heegaard move, then there is a commutative diagram
\[
\xymatrix@C=2cm{
	\SI(\calH_1 \# \calH_0) \ar[r]^{~\SI(W_3)~} \ar[d]_{\Psi} & \SI(\calH_1) \ar[d]^{\Psi'} \\
	\SI(\calH_2 \# \calH_0) \ar[r]_{~\SI(W_3)~} & \SI(\calH_2)
}
\]
where $\Psi$ (respectively $\Psi'$) are the isomorphisms of symplectic instanton homologies induced by the Heegaard move (as in \cite[Section 5]{horton1}).
\end{thm}

When the cobordism $W_3: Y' \longrightarrow Y$ consists of $n$ $3$-handles, $Y' \cong Y \# n(S^2 \times S^1)$ we can apply Proposition \ref{prop:3handlediagram} iteratively to get a Heegaard diagram $\calH \# \calH_0 \# \cdots \# \calH_0$ for $Y'$, from which we can define the cobordism map
\[
	\CSI(W_3): \CSI(\calH \# n\calH_0) \longrightarrow \CSI(\calH),
\]
\[
	\CSI(W_3)(\xi \otimes \eta) = \begin{cases} \xi, & \text{if $\eta$ is the generator $\theta^{\otimes n}$ of $\CSI_\ast(n\calH_0) \cong H^{3-\ast}(S^3)^{\otimes n}$,} \\ 0, & \text{otherwise.} \end{cases}
\]
Similar to the case of $1$-handles, the induced map on homology, $\SI(W_3): \SI(Y') \longrightarrow \SI(Y)$ is actually an invariant of the cobordism:

\begin{thm}
The $3$-handle cobordism map $\SI(W_3): \SI(Y') \longrightarrow \SI(Y)$ is invariant under the reordering of the $3$-handles of $W$ and handleslides among them.
\end{thm}

\subsection{$2$-Handle Cobordisms}

\label{sect:2-handles}

The situation for $2$-handle cobordisms is much more interesting. Recall that $4$-dimensional $2$-handles are attached along framed links $\bbL = \amalg_{i = 1}^n (L_i, \lambda_i)$, where the $L_i$ are the connected components of the link and $\lambda_i$ is a choice of longitude for $L_i$ (a homology class $\lambda_i \in H_1(\partial \mathrm{nbd}(L_i))$ with $\mu_i \cdot \lambda_i = 1$, where $\mu_i \in H_1(\partial \mathrm{nbd}(L_i))$ is the meridian of $L_i$).

For a framed link $\bbL \subset Y$, let $Y(\bbL)$ denote the result of surgery on $\bbL$ and $W(\bbL): Y \longrightarrow Y(\bbL)$ denote the trace of this surgery. One would like to have a Heegaard diagram relating $Y$ and $Y(\bbL)$; to do this, certain choices must be made.

\begin{defn}
A {\bf bouquet} for the framed link $\bbL \subset Y$ is a $1$-complex $B(\bbL)$ embedded in $Y$ with
\begin{itemize}
	\item $n + 1$ $0$-cells given by a basepoint $y_0 \in Y \setminus \amalg L_i$ and basepoints $y_i \in L_i$.
	\item $2n$ $1$-cells given by the $L_i$ and $n$ paths $\delta_i \subset Y$ satisfying $\delta_i(0) = y_0$, $\delta_i(1) = y_i$, and $\delta_i([0,1)) \cap \amalg L_j = \varnothing$.
\end{itemize}
\end{defn}

Clearly a regular neighborhood of a bouquet $B(\bbL)$ is a genus $n$-handlebody and $\amalg L_i$ is unknotted inside this handlebody. This handlebody may not give a Heegaard splitting of $Y$, but there will be some genus $g \geq n$ Heegaard splitting of $Y$ with one of the handlebodies containing this regular neighborhood. Hence we introduce the following definition.

\begin{defn}
A Heegaard triple $(\Sigma_g, \bfalpha, \bfbeta, \bfgamma, z)$ is said to be {\bf subordinate to the bouquet} $B(\bbL)$ if the following conditions are satisfied:
\begin{itemize}
	\item Attaching $2$-handles along $\{\alpha_i\}_{i = 1}^g$ and $\{\beta_i\}_{i = n+1}^g$ gives the complement of $B(\bbL)$ in $Y$.
	\item $\gamma_i = \beta_i$ for $i = n+1, \dots, g$.
	\item After surgering out $\beta_{n+1}, \dots, \beta_g$, both $\beta_i$ and $\gamma_i$ lie in the obvious punctured torus $T_i \subset \Sigma_g$ corresponding to $L_i$ for $i = 1, \dots, n$.
	\item For $i = 1, \dots, n$ the $\beta_i$ are meridians for $L_i$ and the $\gamma_i$ are the longitudes of $L_i$ specified by $\lambda_i$.
\end{itemize}
\end{defn}

Note that for such a Heegaard triple, $\calH_{\alpha\beta} = (\Sigma_g, \bfalpha, \bfbeta, z)$ is a Heegaard diagram for $Y$, $\calH_{\alpha\gamma} = (\Sigma_g, \bfalpha, \bfgamma, z)$ is a Heegaard diagram for $Y(\bbL)$, and $\calH_{\beta\gamma} = (\Sigma_g, \bfbeta, \bfgamma, z)$ is a Heegaard diagram for $\#^{g-n} (S^2 \times S^1)$. More specifically, we have the following.

\begin{prop}
\textup{(Proposition 4.3 of \cite{oz-sz-4mfd})} The $4$-manifold $X_{\alpha\beta\gamma}$ described by a Heegaard triple $(\Sigma_g, \bfalpha, \bfbeta, \bfgamma, z)$ subordinate to a bouquet $B(\bbL)$ has boundary $-Y \amalg \#^{g - n} (S^2 \times S^1) \amalg Y(\bbL)$. Filling in the $\#^{g-n} (S^2 \times S^1)$ boundary component gives the $2$-handle cobordism $W(\bbL)$:
\[
	W(\bbL) \cong X_{\alpha\beta\gamma} \cup \natural^{g-n} (D^3 \times S^1).
\]
\end{prop}

The above proposition suggests that we may use a triangle map associated to $(\Sigma_g, \bfalpha, \bfbeta, \bfgamma, z)$ in order to define a $2$-handle cobordism map associated to $W(\bbL)$. We define
\[
	f_{B(\bbL)}: \CSI(\calH_{\alpha\beta}) \longrightarrow \CSI(\calH_{\alpha\gamma}),
\]
\[
	f_{B(\bbL)}(\xi) = \mu_2^{\alpha\beta\gamma}(\xi, \Theta^{\otimes (g-n)}),
\]
where as usual $\Theta^{\otimes(g-n)}$ is the element of top degree in $\SI(\#^{g-n} (S^2 \times S^1)) \cong H^{3-\ast}(S^3)^{\otimes(g-n)}$. Since $\Theta^{\otimes(g-n)}$ is a cycle, $g_{\bbL}$ is a chain map and we get a map $F_{\bbL}: \SI(Y) \longrightarrow \SI(Y(\bbL))$ on homology.

We must justify the notation $F_{\bbL}$ by showing that this map depends only on the framed link $\bbL$. There are two levels of choices in the construction: first we pick a bouquet $B(\bbL)$ for $\bbL$, and then we choose a Heegaard triple subordinate to $B(\bbL)$. As a first step, we describe the difference between two Heegaard triples subordinate to the same bouquet.

\begin{lem}
\label{lem:bouquetdiagram}
\textup{(Lemma 4.5 of \cite{oz-sz-4mfd})} Let $\bbL$ be a framed link in a closed, oriented $3$-manifold $Y$. For a fixed bouquet $B(\bbL)$, any two Heegaard triples subordinate to $B(\bbL)$ are related by a sequence of the following moves:
\begin{itemize}
	\item Isotopies and handleslides amongst the $\bfalpha$-curves.
	\item Simultaneous isotopies and handleslides amongst the curves $\beta_{n+1}, \dots, \beta_g, \gamma_{n+1}, \dots, \gamma_g$.
	\item Isotopies and handleslides of the $\beta_i$, $1 \leq i \leq n$, over the $\beta_j$, $n + 1 \leq j \leq g$.
	\item Isotopies and handleslides of the $\gamma_i$, $1 \leq i \leq n$, over the $\gamma_j$, $n + 1 \leq j \leq g$.
	\item ``Stabilizations'' introducing the usual stabilization curves $\alpha_{g+1}$ and $\beta_{g+1}$ along with a $\bfgamma$-curve $\gamma_{g+1} = \beta_{g+1}$.
\end{itemize}
\end{lem}

With the above in place, we can prove independence of the subordinate triple.

\begin{lem}
\label{lem:indep-subdiag}
For a fixed bouquet $B(\bbL)$, the $2$-handle cobordism map $F_{\bbL}: \SI(Y) \longrightarrow \SI(Y(\bbL))$ is independent of the choice of Heegaard triple subordinate to $B(\bbL)$ in the following sense: If $\calH = (\Sigma_g, \bfalpha, \bfbeta, \bfgamma, z)$ and $\calH' = (\Sigma_{g^\prime}, \bfalpha', \bfbeta', \bfgamma', z')$ are two Heegaard triples subordinate to $B(\bbL)$, then there is a commutative diagram
\[
\xymatrix{
	\SI(\calH_{\alpha\beta}) \ar[r]^{F_{\bbL}} \ar[d]_{\Psi_1} & \SI(\calH_{\alpha\gamma}) \ar[d]^{\Psi_2} \\
	\SI(\calH_{\alpha^\prime \beta^\prime}^\prime) \ar[r]_{F_{\bbL}} & \SI(\calH_{\alpha^\prime \gamma^\prime}^\prime)
}
\]
where $\Psi_1$ and $\Psi_2$ are the isomorphisms induced by the Heegaard moves relating the respective pairs of Heegaard diagrams.
\end{lem}

\begin{proof}
We only need to check commutativity of the diagram for the moves listed in Lemma \ref{lem:bouquetdiagram}. The only such move where there is something to prove is the ``stabilization'' move. The argument is similar to that of the proof of Theorem \ref{thm:1handlemap}. In terms of quilted Floer homology, the square we are interested in is
\[
\xymatrix{
	\HF(L_\alpha, L_\beta) \ar[r] \ar[d]_{\Psi_1} & \HF(L_\alpha, L_\gamma) \ar[d]^{\Psi_2} \\
	\HF(L_\alpha, L_{\alpha\alpha^\prime}, L_{\beta^\prime\beta}, L_\beta) \ar[r] & \HF(L_\alpha, L_{\alpha\alpha^\prime}, L_{\gamma^\prime\gamma}, L_\gamma)
}
\]
where the vertical maps are inverses of strip shrinking maps and the horizontal maps are relative invariants of (quilted) triangles. By Theorem \ref{thm:stripshrinking}, this square commutes.
\end{proof}

\begin{lem}
\label{lem:indep-bouquet}
The $2$-handle cobordism map $F_{\bbL}: \SI(Y) \longrightarrow \SI(Y(\bbL))$ is independent of the choice of bouquet.
\end{lem}

\begin{proof}
It suffices to prove the independence in the case where $B(\bbL)$ and $B'(\bbL)$ are two bouquets differing only in the choice of the arc $\delta_1$ (as well as its terminal point $y_1$). But in this case, one can construct two Heegaard triples, $(\Sigma, \bfalpha, \bfbeta, \bfgamma, z)$ subordinate to $B(\bbL)$ and $(\Sigma', \bfalpha', \bfbeta', \bfgamma', z')$ subordinate to $B'(\bbL)$, such that $\bfalpha = \bfalpha'$, $\bfbeta'$ can be obtained from $\bfbeta$ via handleslides amongst the $\beta$-curves, and $\bfgamma'$ can be obtained from $\bfgamma$ via handleslides amongst the $\gamma$-curves (see the proof of Lemma 4.8 in \cite{oz-sz-4mfd}). Handleslides do not change the Lagrangians at all, so $F_{\bbL}$ is therefore independent of the choice of bouquet.
\end{proof}

Lemmas \ref{lem:indep-subdiag} and \ref{lem:indep-bouquet} immediately imply the following.

\begin{thm}
For any framed link $\bbL$ in a closed, oriented $3$-manifold $Y$, the $2$-handle cobordism map $F_{\bbL}: \SI(Y) \longrightarrow \SI(Y(\bbL))$ is independent of the choices of bouquet and subordinate Heegaard diagram used to define it.
\end{thm}

To show that $F_{\bbL}$ is actually an invariant of the $4$-manifold $W(\bbL)$, we need to check that it is invariant under handleslides and also that it is independent of the order in which we attach the $2$-handles. Invariance under handleslides is established as follows.

\begin{thm}
\label{lem:indep-2handleslide}
Let $\bbL'$ be a framed link obtained from a given framed link $\bbL$ by performing handleslides amongst the components. Then the $2$-handle cobordism maps $F_{\bbL}: \SI(Y) \longrightarrow \SI(Y(\bbL))$ and $F_{\bbL'}: \SI(Y) \longrightarrow \SI(Y(\bbL')) = \SI(Y(\bbL))$ are equal.
\end{thm}

\begin{proof}
It suffices to consider the case where $\bbL$ consists of two components $L_1, L_2$ and $\bbL' = L_1^\prime \cup L_2$ is the framed link resulting from a handleslide of $L_1$ over $L_2$. Let $\sigma: [0, 1] \longrightarrow Y$ be the arc used to define the handleslide, with $\sigma(0) \in L_1$. There is an obvious arc $\sigma': [0, 1] \longrightarrow Y$ with $\sigma'(0) = \sigma(0)$ and $\sigma'(1) \in L_1^\prime$. Now we have a bouquet $B(\bbL)$ for $\bbL$ formed from $L_1 \cup \sigma \cup L_2$ by considering $\sigma$ as a $1$-complex with $0$-simplices at $\sigma(0)$, $\sigma(\tfrac{1}{2})$, and $\sigma(1)$. We also have a bouquet $B(\bbL')$ for $\bbL'$ formed from $L_1^\prime \cup \sigma' \cup \sigma \cup L_2$. Writing $\calH = (\Sigma_g, \bfalpha, \bfbeta, \bfgamma, z)$ for the Heegaard triple subordinate to $B(\bbL)$ and $\calH' = (\Sigma_g, \bfalpha', \bfbeta', \bfgamma', z)$ for the Heegaard triple subordinate to $B(\bbL')$, it is easy to see that $\calH$ and $\calH'$ are identical aside from $\beta_1^\prime$ being a handleslide of $\beta_1$ over $\beta_2$ and $\gamma_1^\prime$ being a handleslide of $\gamma_1$ over $\gamma_2$. Therefore the moduli spaces and Lagrangians associated to $\calH$ and $\calH'$ are identical, from which it follows that $F_{\bbL}$ and $F_{\bbL'}$ are identical.
\end{proof}

To establish invariance under the ordering of the $2$-handles, we prove a composition property for the framed link invariants.

\begin{thm}
\label{lem:indep-2handleorder}
If a framed link $\bbL \subset Y$ can be decomposed as $\bbL_1 \amalg \bbL_2$, then we have that
\[
	F_{\bbL} = F_{Y(\bbL_1), \bbL_2} \circ F_{Y, \bbL_1}: \SI(Y) \longrightarrow \SI(Y(\bbL_1)) \longrightarrow \SI(Y(\bbL_1 \amalg \bbL_2)) = \SI(Y(\bbL)).
\]
\end{thm}

\begin{proof}
Fix a bouquet $B(\bbL)$ for $\bbL$ and let $\calH = (\Sigma_g, \bfalpha, \bfbeta, \bfgamma, z)$ be a Heegaard triple subordinate to $B(\bbL)$ such that $\gamma_1, \dots, \gamma_n$ are the framings for the $n$ components of $\bbL_1$. Define a fourth set of attaching curves $\bfdelta$ by setting
\[
	\delta_i = \begin{cases} \gamma_i, & i = 1, \dots, n, \\ \beta_i, & i = n+1, \dots, g. \end{cases}
\]
Note that $\calH_1 = (\Sigma_g, \bfalpha, \bfbeta, \bfdelta, z)$ is a Heegaard triple subordinate to the bouquet $B(\bbL_1)$ obtained from $B(\bbL)$ by throwing out the parts of the $1$-complex having to do with $\bbL_2$. There is a similar bouquet $B(\bbL_2)$ which can be considered as lying in $Y(\bbL_1)$, and the Heegaard triple $\calH_2 = (\Sigma_g, \bfalpha, \bfdelta, \bfgamma, z)$ is subordinate to this bouquet.

Using the bouquets $B(\bbL_1) \subset Y$ and $B(\bbL_2) \subset Y(\bbL_1)$, we have that
\[
	F_{Y,\bbL_1}(\xi) = \mu_2^{\alpha\beta\delta}(\xi \otimes \Theta_{\beta\delta}),
\]
\[
	F_{Y(\bbL_1), \bbL_2}(\xi) = \mu_2^{\alpha\delta\gamma}(\xi \otimes \Theta_{\delta\gamma}),
\]
from which we may compute
\begin{align*}
	F_{Y(\bbL_1), \bbL_2}(F_{Y, \bbL_1}(\xi)) & = \mu_2^{\alpha\delta\gamma}(\mu_2^{\alpha\beta\delta}(\xi \otimes \Theta_{\beta\delta}) \otimes \Theta_{\delta\gamma}) \\
	& = \mu_2^{\alpha\beta\gamma}(\xi \otimes \mu_2^{\beta\delta\gamma}(\Theta_{\beta\delta} \otimes \Theta_{\delta\gamma})) \\
	& = \mu_2^{\alpha\beta\gamma}(\xi \otimes \Theta_{\beta\gamma}) \\
	& = F_{\bbL}(\xi),
\end{align*}
where $\mu_2^{\beta\delta\gamma}(\Theta_{\beta\delta} \otimes \Theta_{\delta\gamma}) = \Theta_{\beta\gamma}$ by \cite[Proposition 9.5]{horton1}.
\end{proof}

\subsection{General Cobordisms and Invariance}

So far, we have only defined cobordism maps for cobordisms consisting entirely of handles of equal index. We wish to make a definition for an arbitrary cobordism. Let $W: Y \longrightarrow Y'$ be an arbitrary compact, connected cobordism of closed, connected, oriented $3$-manifolds and choose a handle decomposition of $W$ such that the handles are attached in order of increasing index. Hence we get a factorization of $W$,
\[
	W: Y \xrightarrow{~W_1~} Y_1 \xrightarrow{~W_2~} Y_2 \xrightarrow{~W_3~} Y',
\]
where each $W_k$ is a cobordism consisting entirely of $k$-handles. We wish to define the cobordism map $\SI(W): \SI(Y) \longrightarrow \SI(Y')$ via this decomposition of $W$ by using our previously defined cobordism maps:
\[
	\SI(W) = \SI(W_3) \circ \SI(W_2) \circ \SI(W_1).
\]
The goal of this section is to show that the map $\SI(W)$ is well-defined, and that it is actually a topological invariant of the $4$-manifold $W$.

Previously, we showed that $\SI(W_1)$, $\SI(W_2)$, and $\SI(W_3)$ are topological invariants of $W_1$, $W_2$, and $W_3$, respectively. Hence it only remains to inspect the behavior of these maps under Kirby moves involving handles of different indices, \emph{i.e.} cancellation of pairs of handles.

\begin{lem}
\label{lem:indep-cancel}
Let $W_1$ be a cobordism obtained by attaching a single $1$-handle to the closed, oriented $3$-manifold $Y$, and let $W_2$ be a cobordism obtained by attaching a $2$-handle to $Y \# (S^2 \times S^1)$ along a framed knot $\bbK$ such that the $2$-handle cancels the $1$-handle. Then $\SI(W_2) \circ \SI(W_1)$ is the identity.
\end{lem}

\begin{proof}
Given a Heegaard diagram $\calH = (\Sigma_g, \bfalpha, \bfbeta, z)$ for $Y$, we have
\[
	\calH_0 = (\Sigma_g, \bfalpha, \bfbeta, \bfbeta, z) \# (\Sigma_1, \alpha_0, \alpha_0, \gamma_0, z_0),
\]	
where $\alpha_0$ is the meridian of $\Sigma_1$ and $\gamma_0$ is the standard longitude of $\Sigma_1$, is a Heegaard triple representing a $2$-handle attachment along a framed knot $\bbK_0$ in $Y \# (S^2 \times S^1)$ that cancels $1$-handle addition. Note that the triangle count represented by $\calH^\prime$ represents the nearest point map $\SI(Y) \otimes \SI(S^2 \times S^1) \xrightarrow{~\cong~} \SI(Y)$, while the triangle count from $\calH_0$ represents the nearest point map $\SI(S^2 \times S^1) \otimes \SI(S^3) \xrightarrow{~\cong~} \SI(S^3)$. We may therefore compute
\[
	(\SI(W_2) \circ \SI(W_1))(\xi) = \SI(W_2)(\xi \otimes \Theta) = \xi \otimes \theta \in \SI(Y) \otimes \SI(S^3),
\]
where $\theta$ is the trivial representation in $\SI(S^3)$. This shows that $\SI(W_2) \circ \SI(W_1)$ is the map induced by stabilization of the original Heegaard diagram $\calH$.

The proof is not finished, because $\bbK_0$ is not the only framed knot such that $2$-handle attachment along it cancels the $1$-handle addition. Let $\bbK$ be an arbitrary such knot, and let $\calH' = (\Sigma_g, \bfalpha, \bfbeta, \bfbeta, z) \# (\Sigma_1, \alpha_0, \alpha_0, \delta_0, z_0)$ be the Heegaard triple corresponding to a $2$-handle addition along $\bbK$. The only difference between $\bbK$ and $\bbK_0$ is in the framings $\gamma_0$ and $\delta_0$. These two framing curves differ by a power of a Dehn twist about $\alpha_0$. The action of the Dehn twist $\tau_{\alpha_0}$ on $\Sigma_1$ induces a symplectic Dehn twist on $\scrR_{1,3}$ about the Lagrangian sphere
\[
	C_{\alpha_0} = \{[\rho] \in \scrR_{1,3} : \rho(\alpha) = -I\}
\]
(see \emph{e.g.} \cite[Theorem 3.8(b)]{fiberedtriangle}). Since $C_{\alpha_0} \cap L_{\alpha_0} = \varnothing$, the triangle count from $(\Sigma_1, \alpha_0, \alpha_0, \delta_0,z)$ is still the same in Floer homology; only the large area triangles in the count may change, but there are an even number of them. Hence this triple diagram represents the same closest point map $\SI(S^2 \times S^1) \otimes \SI(S^3) \xrightarrow{~\cong~} \SI(S^3)$ as $\calH_0$ does, so that
\[
	(\SI(W_2) \circ \SI(W_1))(\xi) = \xi \otimes \theta,
\]
just as before.
\end{proof}

A dual argument establishes the corresponding result for cancelling $2$- and $3$-handles:

\begin{lem}
Let $W_2$ be a cobordism obtained by attaching a single $2$-handle to the closed, connected, oriented $3$-manifold $Y$ along a framed knot $\bbK$, and let $W_2$ be a cobordism obtained by attaching a $3$-handle to $Y(\bbK)$ along some $2$-sphere such that the $3$-handle cancels the $2$-handle. Then $\SI(W_3) \circ \SI(W_2)$ is the identity.
\end{lem}

By combining the various lemmas throughout this section, we finally obtain the following.

\begin{thm}
\label{thm:cobordism-maps}
For any compact, connected cobordism $W: Y \longrightarrow Y'$ of closed, connected, oriented $3$-manifolds $Y$, there is a well-defined map $\SI(W): \SI(Y) \longrightarrow \SI(Y')$ between their symplectic instanton homologies which is a topological invariant of the $4$-manifold $W$.
\end{thm}

It is also useful to know that the cobordism maps are well-behaved under composition.

\begin{thm}
If $W: Y \longrightarrow Y'$ and $W': Y' \longrightarrow Y''$ are two cobordisms between connected, closed, oriented $3$-manifolds $Y$, $Y'$, and $Y''$, then
\[
	\SI(W \cup_{Y^\prime} W^\prime) = \SI(W') \circ \SI(W): \SI(Y) \longrightarrow \SI(Y'').
\]
\end{thm}

\begin{proof}
Since $\SI(W)$ and $\SI(W')$ are defined in terms of handle decompositions of $W$ and $W'$, to prove the composition law it suffices to check that the relevant maps induced by handle additions commute. We already showed that $2$-handle maps commute with each other in Theorem \ref{lem:indep-2handleorder}, and certainly we can commute $1$-handle maps with $3$-handle maps defined by nonseparating attaching spheres. Therefore it remains to show that $2$-handle maps commute with both $1$- and $3$-handle maps.

Let $\calH_0 = (\Sigma_1, \alpha_0, \beta_0, \gamma_0, z)$ be the genus $1$ Heegaard triple with $\alpha_0, \beta_0,$ and $\gamma_0$ all the meridian of $\Sigma_1$. Given a framed link $\bbL \subset Y$, a bouquet $B(\bbL)$ for $\bbL$, and a Heegaard triple $\calH$ subordinate to $B(\bbL)$, then $\calH \# \calH_0$ is a Heegaard triple subordinate to the bouquet induced by $B(\bbL)$ in $Y'$, the $3$-manifold obtained by adding a $1$-handle to $Y$. Let $W_1: Y \longrightarrow Y'$ and $W_1(\bbL): Y(\bbL) \longrightarrow Y'(\bbL)$ denote the associated $1$-handle cobordisms, and $W_2: Y \longrightarrow Y(\bbL)$, $W_2^\prime: Y' \longrightarrow Y'(\bbL)$ denote the associated $2$-handle cobordisms. A combination of Theorem \ref{thm:kunneth-triangle} and \cite[Proposition 9.5]{horton1} gives
\begin{align*}
	\SI(W_2^\prime) \circ \SI(W_1)(\xi) & = \mu_2^{\alpha\cup\alpha_0,\beta\cup\beta_0,\gamma\cup\gamma_0}(\xi \otimes \Theta_{\alpha_0\beta_0},\Theta_{\beta\gamma} \otimes \Theta_{\beta_0\gamma_0}) \\
	 & = \mu_2^{\alpha\beta\gamma}(\xi, \Theta_{\beta\gamma}) \otimes \mu_2^{\alpha_0\beta_0\gamma_0}(\Theta_{\alpha_0\beta_0},\Theta_{\beta_0\gamma_0}) \\
	 & = \SI(W_2)(\xi) \otimes \Theta_{\alpha_0\gamma_0},
\end{align*}
while on the other hand
\[
	\SI(W_1(\bbL)) \circ \SI(W_2)(\xi) = \SI(W_2)(\xi) \otimes \Theta_{\alpha_0\gamma_0}.
\]
Therefore cobordism maps for $1$- and $2$-handles commute.

To show that cobordism maps for $2$- and $3$-handles commute, we use an argument dual to the one of the previous paragraph. With $\calH_0$ as above, suppose $Y'$ is the result of adding a $3$-handle to some non-separating $2$-sphere in $Y$ that does not intersect the framed link $\bbL \subset Y$. Write $W_3: Y \longrightarrow Y'$ and $W_3(\bbL): Y(\bbL) \longrightarrow Y'(\bbL)$ for the associated $3$-handle cobordisms. Given a bouquet $B(\bbL)$ for $\bbL$ in $Y'$, there is a subordinate Heegaard triple $\calH$ such that $\calH \# \calH_0$ is a Heegaard triple subordinate to the bouquet for $\bbL$ in $Y$ induced by $B(\bbL)$. Again, Theorem \ref{thm:kunneth-triangle} and \cite[Proposition 9.5]{horton1} imply that
\begin{align*}
	\SI(W_3(\bbL)) \circ \SI(W_2)(\xi \otimes \Theta_{\alpha_0\gamma_0}) & = \SI(W_3(\bbL))(\mu_2^{\alpha\cup\alpha_0,\beta\cup\beta_0,\gamma\cup\gamma_0}(\xi \otimes \Theta_{\alpha_0\beta_0},\Theta_{\beta\gamma} \otimes \Theta_{\beta_0\gamma_0})) \\
	 & = \SI(W_3(\bbL))(\mu_2^{\alpha\beta\gamma}(\xi, \Theta_{\beta\gamma}) \otimes \mu_2^{\alpha_0\beta_0\gamma_0}(\Theta_{\alpha_0\beta_0},\Theta_{\beta_0\gamma_0})) \\
	 & = \SI(W_3(\bbL))(\SI(W_2^\prime)(\xi) \otimes \Theta_{\alpha_0\gamma_0}) \\
	 & = \SI(W_2^\prime)(\xi),
\end{align*}
while on the other hand
\[
	\SI(W_2^\prime) \circ \SI(W_3)(\xi \otimes \Theta_{\alpha_0\gamma_0}) = \SI(W_2^\prime)(\xi),
\]
which establishes the desired commutativity.
\end{proof}

\begin{rem}
As noted before, $\SI(Y)$ is really an invariant of the \emph{pointed} $3$-manifold $(Y,x)$. Therefore one should consider cobordisms $(W, \gamma): (Y,x) \longrightarrow (Y',x')$ of \emph{pointed} $3$-manifolds when discussing functoriality. Here $\gamma: [0,1] \longrightarrow W$ is a properly embedded path with $\gamma(0) = x$, $\gamma(1) = x'$. The cobordism maps constructed here should be thought of as only using the simplest possible paths $\gamma$, \emph{i.e.} for each handle attachment we think of $W = (Y \times [0,1]) \cup (k\text{-handle})$ with $\gamma(t) = (x,t)$.
\end{rem}

\subsection{Blowups}

In this section, we inspect the behavior of cobordism maps under blowups (\emph{i.e.} interior connected sums with $\overline{\C P}^2$).

In terms of framed links, blowing up corresponds to adding a $2$-handle to a $-1$-framed unknot. Let us consider the simplest case of the $-1$-framed unknot $\bbK$ in the $3$-sphere. It is clear that there is a genus $1$ Heegaard triple $\calH = (\Sigma_1, \alpha, \beta, \gamma, z)$ subordinate to the simplest possible bouquet $B(\bbK)$ for $\bbK$ with $\alpha$ and $\beta$ the standard meridian and longitude generators for $\pi_1(\Sigma_1)$, and $\gamma = \beta - \alpha$ (see Figure \ref{fig:blowup}). The corresponding Lagrangians $L_\alpha$, $L_\beta$, and $L_\gamma$ all pairwise intersect at only the trivial representation $\theta = [I,I,\bfi, \bfj, -\bfk] \in \scrR_{1,3}$.

\begin{figure}[h]
	\centering
	\includegraphics[scale=1.2]{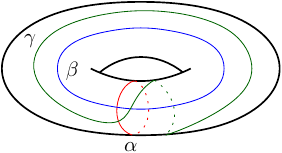}
	\caption{The genus $1$ Heegaard diagram subordinate to $-1$-surgery on the unknot in $S^3$.}
	\label{fig:blowup}
\end{figure}

\begin{lem}
For $\bbK$ the $-1$-framed unknot in $S^3$ as above and $W_2: S^3 \longrightarrow S^3$ is the associated $2$-handle cobordism, $\SI(W_2): \SI(S^3) \longrightarrow \SI(S^3)$ is the zero map.
\label{lem:S3blowup}
\end{lem}

\begin{proof}
Recall \hide{(Theorem \ref{thm:gradings}) }that there is a $\Z/2$-grading on symplectic instanton homology, and the trivial representation may be taken to sit in grading zero in $\SI(S^3)$. In general, given $x \in L_\alpha \cap L_\beta$, $y \in L_\beta \cap L_\gamma$, and $w \in L_\gamma \cap L_\alpha$, the expected dimension of the moduli space of pseudoholomorphic triangles in $\scrR_{1,3}$ through $x$, $y$, and $w$ mod $2$ is given in terms of the absolute grading by
\[
	\dim \mathcal{M}(x,y,w) = \gr(x) + \gr(y) - \gr(w) - 3 \pmod 2, \tag{$\ast$}
\]
where the $3 = \tfrac{1}{2} \dim \scrR_{1,3}$ appears since we are using Floer homology rather than Floer cohomology.

By applying a small Hamiltonian isotopy to $L_\alpha$, we can resolve the triple intersection point $\theta$ into three intersection points $\theta_1 \in L_\alpha \cap L_\beta$, $\theta_2 \in L_\beta \cap L_\gamma$, and $\theta_3 \in L_\gamma \cap L_\alpha$. The $\Z/2$-grading is preserved under this Hamiltonian isotopy, so we still have that $\gr(\theta_i) = 0$ for $i = 1, 2, 3$. Hence by $(\ast)$ we have that $\dim \mathcal{M}(\theta_1, \theta_2, \theta_3) \equiv 1\hide{-3} \pmod 2$, so the count of rigid pseudoholomorphic triangles coming from $\calH$ is necessarily zero. It follows that $\SI(W_2) \equiv 0$.
\end{proof}

\begin{thm}
If $\bbK$ is the $-1$-framed unknot in any closed, oriented $3$-manifold $Y$ and $W_2: Y \longrightarrow Y$ is the associated $2$-handle cobordism, then $\SI(W_2): \SI(Y) \longrightarrow \SI(Y)$ is the zero map.
\end{thm}

\begin{proof}
This follows from Lemma \ref{lem:S3blowup}, Theorem \ref{thm:kunneth-triangle}, and the fact that we can choose a Heegaard triple subordinate to a bouquet for $\bbK$ such that it contains the Heegaard triple from Figure \ref{fig:blowup} as a connect summand.
\end{proof}


\subsection{Functoriality for Nontrivial Bundles}

Now we turn to the topic of functoriality of symplectic instanton homology with respect to cobordisms in the presence of nontrivial $\SO(3)$-bundles, \emph{i.e.} assigning to each \emph{bundle} cobordism $(W, P): (Y_0, P_0) \longrightarrow (Y_1, P_1)$ a homomorphism between the symplectic instanton homologies of the boundary components.

By a classic theorem of Dold and Whitney \cite{dold-whitney}, $\SO(3)$-bundles on a compact $4$-manifold $W$ are classified by pairs $(w_2, p_1) \in H_2(W; \Z/2) \times H_4(W; \Z)$ (the Stiefel-Whitney and Pontryagin classes of the bundle) such that the Pontryagin square of $w_2$ is the mod $4$ reduction of $p_1$. Since $H^4(W) = 0$ for $\partial W \neq \varnothing$, we therefore see that $\SO(3)$-bundles over a cobordism between non-empty $3$-manifolds are simply classified by their second Stiefel-Whitney class. By Poincar\'e-Lefschetz duality, we may then think of such a bundle $P \longrightarrow W$ as corresponding to a relative mod $2$ homology class $\Omega \in H_2(W, \partial W; \F_2)$. By naturality of Poincar\'e-Lefschetz duality, we furthermore have that if $\partial \Omega = \omega_0 \oplus \omega_1 \in H_1(Y_0; \F_2) \oplus H_1(Y_1; \F_2) \cong H_1(\partial W; \F_2)$ (where $\partial$ is the connecting homomorphism in the long exact sequence of the pair $(W, \partial W)$), then $\omega_i$ is the mod $2$ homology class representing the $\SO(3)$-bundle $P_i = i_{Y_i}^\ast P$. In this way, we eliminate direct reference to $\SO(3)$-bundles in this section.

Let $W: Y \longrightarrow Y'$ be a compact, connected, oriented cobordism of closed, connected, oriented $3$-manifolds. Given a mod $2$ relative homology class $\Omega \in H_2(W, \partial W; \F_2)$, write $\omega \oplus \omega' \in H_1(Y; \F_2) \oplus H_1(Y'; \F_2) \cong H_1(\partial W; \F_2)$ for its image under the boundary map in the long exact sequence for the pair $(W, \partial W)$. Let use write $(W, \Omega): (Y, \omega) \longrightarrow (Y', \omega')$ for such a combination of a cobordism with a mod $2$ homology class. We claim the following:

\begin{thm}
To any compact, connected, oriented cobordism with homology class $(W, \Omega): (Y, \omega) \longrightarrow (Y', \omega')$ we may associate a well-defined homomorphism
\[
	\SI(W, \Omega): \SI(Y, \omega) \longrightarrow \SI(Y', \omega')
\]	
that is functorial in the following sense:
\begin{itemize}
	\item[(1)] For any closed, connected, oriented $3$-manifold $Y$, $\SI(Y \times [0,1], \omega \times [0,1]) = \id_{\SI(Y,\omega)}$.
	\item[(2)] For any other compact, connected, oriented cobordism with homology class $(W', \Omega'): (Y', \omega') \allowbreak \longrightarrow (Y'', \omega'')$, we have
	\[
		\SI(W \cup_{Y'} W', \Omega \cup_{Y'} \Omega') = \SI(W', \Omega') \circ \SI(W, \Omega).
	\]
\end{itemize}
\end{thm}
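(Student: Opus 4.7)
I would follow the template established in \cite{horton1} for the $\Omega = 0$ case, modifying each step to accommodate the relative homology class. The first task is to extend the notion of a Cerf decomposition to the 4-dimensional cobordism setting with homology class: a Cerf decomposition of $(W, \Omega): (Y, \omega) \longrightarrow (Y', \omega')$ is a factorization
\[
	W = W_1 \cup_{Y_1} W_2 \cup_{Y_2} \cdots \cup_{Y_{m-1}} W_m
\]
into elementary cobordisms (each consisting of a single 1-, 2-, or 3-handle attachment), together with relative classes $\Omega_k \in H_2(W_k, \partial W_k; \F_2)$ whose glued sum equals $\Omega$. Standard 4-dimensional Cerf theory, together with the arguments used in Section 3 for 3-manifold decompositions, shows that any two such decompositions are related by a finite sequence of moves analogous to (a)--(e): handle cancellation, handle switch, cylinder creation/cancellation, and a homology class swap that permits the transfer of $\Omega_k$ between adjacent pieces when one of them is cylindrical.

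Next, to each elementary piece $(W_k, \Omega_k)$ I would associate a map $\SI(W_k, \Omega_k): \SI(Y_{k-1}, \omega_{k-1}) \longrightarrow \SI(Y_k, \omega_k)$ (where $\omega_j$ denotes the restriction of $\Omega$ to the intermediate 3-manifold $Y_j$) via a relative quilt invariant. Concretely, choose compatible Heegaard splittings of $Y_{k-1}$ and $Y_k$, and attach to $W_k^\theta$ (the cobordism with the trivial $\theta$-graph cobordism removed) a properly embedded surface $\Omega_k$ whose boundary realizes the homology classes $\omega_{k-1}$ and $\omega_k$ in a collar. One then obtains a Lagrangian correspondence
\[
	L(W_k^\theta, \Omega_k) \subset \M_{g(Y_{k-1}),3}^- \times \M_{g(Y_k),3}
\]
of traceless $\SU(2)$-representations on $W_k^\theta \setminus \Omega_k$ that send meridians of the tangle to traceless matrices and meridians of $\Omega_k$ to $-I$, exactly in parallel with Proposition \ref{thm:lagr-omega} and Lemma \ref{lem:lagr-omega}. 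The relative invariant obtained by counting pseudoholomorphic quilted strips with seams given by these correspondences defines $\SI(W_k, \Omega_k)$, and the composite $\SI(W_m, \Omega_m) \circ \cdots \circ \SI(W_1, \Omega_1)$ is the candidate for $\SI(W, \Omega)$.

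To prove independence of the chosen Cerf decomposition, I would verify invariance under each of the Cerf moves. For moves (a)--(d) the arguments of \cite{horton1} carry over essentially verbatim since the only difference from the $\Omega = 0$ case is an adjustment of the holonomy conditions along $\Omega_k$, which is preserved by each local modification. For the new homology class swap move, the key identity
\[
	L(W_k^\theta, \Omega_k) \circ L(W_{k+1}^\theta, \Omega_{k+1}) \cong L(W_k^\theta \cup W_{k+1}^\theta, \Omega_k + \Omega_{k+1})
\]
(valid whenever one of the two pieces is cylindrical) lets the relative invariants be reshuffled without changing the overall composite, mirroring the argument used to establish Theorem \ref{thm:omega-cerf-invariance}. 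The functorial properties (1) and (2) then follow almost for free: the trivial cylinder $(Y \times [0,1], \omega \times [0,1])$ admits a trivial Cerf decomposition whose relative invariant is the identity quilt, and composing cobordisms amounts to concatenating Cerf decompositions with the glued-sum homology class $\Omega \cup_{Y'} \Omega'$.

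\textbf{Main obstacle.} The principal technical difficulty I anticipate is verifying invariance under the 4-dimensional handle-slide move (part of the Cerf switch) in the presence of a nonzero $\Omega_k$. One must arrange a geometric representative of $\Omega_k$ that is simultaneously transverse to the attaching regions of the two handles being slid past one another and remains disjoint from the slide region, and then show that the resulting Lagrangian correspondences before and after the slide are related by a Hamiltonian isotopy compatible with the holonomy condition along $\Omega_k$. This should reduce to a careful bookkeeping exercise using the surjectivity statements for $H_1$ invoked in Section 3 (allowing $\Omega_k$ to be pushed into a collar of the boundary away from any attaching region), but it is the step where the twisted case genuinely requires more care than the $\Omega = 0$ argument of \cite{horton1}.
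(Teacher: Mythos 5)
Your proposal takes a genuinely different route from the paper, and the route has a conceptual gap.

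The paper does not build $\SI(W,\Omega)$ out of a $4$-dimensional Cerf decomposition with Lagrangian correspondences. Instead it decomposes $W$ into sub-cobordisms $W_1 \cup W_2 \cup W_3$ consisting entirely of $1$-, $2$-, and $3$-handles respectively, and writes $\Omega = (i_1)_\ast\Omega_1 + (i_2)_\ast\Omega_2 + (i_3)_\ast\Omega_3$. The $1$-handle and $3$-handle maps are the tensor-with-unit and project-out-unit maps on $\CSI(S^2\times S^1,0)$ (the Mayer--Vietoris computation $H_2(W_1,\partial W_1;\F_2)\cong H_1(Y;\F_2)$ forces $\Omega_1$ to be a product, so there is essentially nothing new for $\Omega\neq 0$). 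The $2$-handle map is a Heegaard triple-diagram triangle map $\xi\mapsto \mu_2^{\alpha\beta\gamma}(\xi,\Theta_{\beta\gamma})$, with the homology class absorbed into the $\alpha$-Lagrangian $L_\alpha^\omega$. Well-definedness is then a matter of Kirby calculus (handleslides, reorderings, $1$-$2$ and $2$-$3$ cancellations), and a key observation is that the decomposition of $\Omega$ among $\Omega_1,\Omega_2,\Omega_3$ is in fact unique, so no ``homology class swap'' move is needed. Functoriality is reduced to commuting $1$- with $2$-handle maps and $2$- with $3$-handle maps, which follows from the $\Omega=0$ case by the same bookkeeping.

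The gap in your approach is the claim that restriction of flat connections on $W_k^\theta \setminus \Omega_k$ produces a Lagrangian correspondence
\[
	L(W_k^\theta,\Omega_k)\subset \M_{g(Y_{k-1}),3}^{-}\times \M_{g(Y_k),3}.
\]
The moduli spaces $\M_{g,3}$ in this theory are attached to thrice-punctured \emph{surfaces}, not to $3$-manifolds, and the boundary of a piece of a $4$-dimensional Cerf decomposition is a $3$-manifold. There is no restriction map from $W_k^\theta$ to a point of $\M_{g,3}^-\times\M_{g,3}$; what a $4$-dimensional elementary cobordism produces in the Wehrheim--Woodward/Manolescu--Woodward Floer field theory framework is a \emph{$2$-morphism} (a relative quilt invariant, i.e.\ an element of a quilted Floer group), not a Lagrangian correspondence. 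This is a category error: Lagrangian correspondences are what $3$-dimensional bordisms between surfaces supply. Your phrase ``counting pseudoholomorphic quilted strips with seams given by these correspondences'' is closer to the truth, but it is inconsistent with the preceding claim that the pieces $W_k^\theta$ themselves give seam conditions, and you would still need to prove that the resulting chain-level map is well defined independent of all auxiliary choices --- which is precisely what the paper achieves by the more concrete handle-by-handle construction. You also identify the handle-slide move as the main obstacle, but in the paper's approach that move is already handled at the level of a fixed Heegaard triple by the $\Omega=0$ argument (the only change for $\Omega\neq 0$ is which holonomy condition $L_\alpha^\omega$ imposes), whereas the genuinely new point is the uniqueness of the $\Omega$-decomposition and the Lemma \ref{lem:12handlecancel}-type cancellation results.
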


The homomorphism $\SI(W, \Omega)$ is constructed similarly to the $\Omega = 0$ case treated, defined first for individual handle attachments and then extended to arbitrary $(W, \Omega)$ by Kirby calculus.

First, suppose $(W_1, \Omega): (Y, \omega) \longrightarrow (Y', \omega')$ consists of a single $1$-handle attachment. Then we necessarily have that $Y' \cong Y \# (S^2 \times S^1)$. A Mayer-Vietoris argument shows that in this case, $H^2(W_1; \F_2) \cong H^2(Y; \F_2)$, and therefore $H_2(W_1, \partial W_1; \F_2) \cong H_1(Y; \F_2)$. It follows that for $1$-handle attachments, we necessarily have $\omega' = \omega \cup 0 \in H_1(Y; \F_2) \oplus H_1(S^2 \times S^1; \F_2 ) \cong H_1(Y'; \F_2)$, and the handle attachment map must be of the form
\[
	\CSI(W_1, \Omega): \CSI(Y, \omega) \longrightarrow \CSI(Y, \omega) \otimes \CSI(S^2 \times S^1,0).
\]
Recall that $\CSI(S^2 \times S^1, 0) \cong H^{3 - \ast}(S^3)$ as a unital algebra; let $\Theta$ denote its unit. Then we define the $1$-handle attachment map in the same way as the $\Omega = 0$ case:
\[
	\CSI(W_1, \Omega)(\xi) = \xi \otimes \Theta.
\]

If $W_1$ consists of $m$ $1$-handle attachments, we may decompose it as $W_1 = W_{1,1} \cup \cdots \cup W_{1,m}$ and define
\[
	\CSI(W_1, \Omega) = \CSI(W_{1,m}, \Omega) \circ \cdots \circ \CSI(W_{1,1}, \Omega),
\]
where the homology classes $\Omega$ are all the same by the above discussion. We may then proceed exactly as in the $\Omega = 0$ case to obtain the following:

\begin{thm}
The map $\SI(W_1, \Omega): \SI(Y, \omega) \longrightarrow \SI(Y', \omega')$ is invariant under the ordering of the $1$-handles of $W_1$ and handleslides amongst them, and therefore is an invariant of the pair $(W_1, \Omega)$.
\end{thm}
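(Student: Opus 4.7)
The plan is to reduce the statement to the corresponding result for trivial bundles \cite[Theorem 8.2]{horton1} by exploiting the fact that $\Omega$ restricts trivially to every $S^2 \times S^1$ summand created by the $1$-handles. The Mayer--Vietoris argument recalled just before the theorem forces the $H_1(Y'; \F_2)$-component of $\omega'$ to be $\omega \cup 0$; by naturality of Poincar\'e--Lefschetz duality, this means a geometric representative of $\Omega$ may be chosen as a surface whose boundary lies entirely in $Y$ and is disjoint from the cocores of all the $1$-handles. Pulling this representative to a collar of $Y \subset W_1$, we see that the homology class is essentially a spectator to the $1$-handle attachments.

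First I would handle the ordering claim. By construction each individual $1$-handle map is $\xi \mapsto \xi \otimes \Theta$, where $\Theta$ is the unit of the unital algebra $\CSI(S^2 \times S^1, 0) \cong H^{3-\ast}(S^3)$ (Proposition \ref{prop:S1xS2}). Since tensoring with the unit on the right commutes with tensoring with the unit on the left up to the canonical symmetry of $\otimes$, and since the above discussion shows that the homology class on each $1$-handle subcobordism is of the form $\omega \cup 0$ independent of the ordering, the composite $\SI(W_{1,m}, \Omega) \circ \cdots \circ \SI(W_{1,1}, \Omega)$ is invariant under any permutation of the $1$-handles.

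For the handleslide invariance I would transcribe the argument of \cite[Theorem 8.2]{horton1}: a handleslide between two $1$-handles can be realized by an isotopy of the co-attaching regions through a region $R \subset W_1$, and the corresponding equality of chain maps on the quilted Floer complexes comes from an invariance of geometric composition of Lagrangian correspondences under such isotopies. Using the Heegaard-diagrammatic simplification established in Proposition \ref{thm:lagr-omega} together with the surjectivity of $H_1(\Sigma_g; \F_2) \to H_1(H_\alpha; \F_2) \to H_1(Y; \F_2)$, we may further isotope the representative of $\omega$ into a parallel copy of the Heegaard surface lying outside $R$, so that the Lagrangians $L_\alpha^\omega$ and $L_\beta^\omega$ differ from their $\omega = 0$ counterparts only by holonomy conditions on curves away from the handleslide region. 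The untwisted argument then applies verbatim.

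The main obstacle I anticipate is bookkeeping: one must verify in each step (choice of Morse datum, choice of representative of $\Omega$, reduction from generalized to genuine Lagrangians) that the chosen representative of the homology class can simultaneously be kept disjoint from the handle attaching/sliding regions and still realize the required class in $H_1(Y; \F_2)$. Once this geometric positioning is carried out, every diagram used in the $\Omega = 0$ case commutes with the introduction of the holonomy condition $\rho(\mu_\omega) = -I$, and the theorem follows.
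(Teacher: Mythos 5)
Your proposal is correct and takes essentially the same route as the paper: the paper observes that the Mayer--Vietoris computation forces the homology class to be $\omega \cup 0$ on each $1$-handle piece, so the map $\xi \mapsto \xi \otimes \Theta$ is literally identical to the $\Omega = 0$ case, and it then simply cites the untwisted invariance result \cite[Theorem 8.2]{horton1}. You fill in somewhat more detail (the explicit symmetry-of-tensor argument for the ordering and a sketch of where the handleslide argument needs the representative of $\omega$ positioned away from the slide region), but the underlying reduction to the trivial-bundle case is the same.
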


The situation for $3$-handles is dual to the $1$-handle case. If $(W_3,\Omega): (Y, \omega) \longrightarrow (Y', \omega')$ consists of a single $3$-handle attachment, then $Y \cong Y' \# (S^2 \times S^1)$ and $H_2(W, \partial W; \F_2) \cong H_1(Y'; \F_2)$. Hence $\omega = \omega' \cup 0 \in H_1(Y'; \F_2) \oplus H_1(S^2 \times S^1; \F_2)$, and the handle attachment map is of the form
\[
	\CSI(W_3, \Omega): \CSI(Y', \omega') \otimes \CSI(S^2 \times S^1, 0) \longrightarrow \CSI(Y', \omega').
\]
We may then define the $3$-handle attachment map in the same way as the $\Omega = 0$ case:
\[
	\CSI(W_3, \Omega)(\xi \otimes \eta) = \begin{cases} \xi, & \text{if } \eta = \Theta, \\ 0, & \text{if } \eta \neq \Theta. \end{cases}
\]

If $W_3$ consists of $m$ $3$-handle attachments, we may decompose it as $W_3 = W_{3,1} \cup \cdots \cup W_{3,m}$ and define
\[
	\CSI(W_3, \Omega) = \CSI(W_{3,m}, \Omega) \circ \cdots \circ \CSI(W_{3,1}, \Omega),
\]
where the homology classes $\Omega$ are all the same by the above discussion. We may then proceed exactly as in the $\Omega = 0$ case to obtain the following:

\begin{thm}
The map $\SI(W_3, \Omega): \SI(Y, \omega) \longrightarrow \SI(Y', \omega')$ is invariant under the ordering of the $3$-handles of $W_3$ and handleslides amongst them, and therefore is an invariant of the pair $(W_3, \Omega)$.
\end{thm}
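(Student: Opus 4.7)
The plan is to mirror the $\Omega = 0$ argument of \cite[Theorem 8.5]{horton1}, exploiting the rigidity of the homology data observed just before the statement. Iterating the Mayer--Vietoris computation $H_2(W_{3,k}, \partial W_{3,k}; \F_2) \cong H_1(Y_k; \F_2)$ along the decomposition $W_3 = W_{3,1} \cup \cdots \cup W_{3,m}$ shows that at each stage the intermediate splitting $Y_{k-1} \cong Y_k \# (S^2 \times S^1)$ carries the class $\omega_{k-1} = \omega_k \cup 0$. In particular, every $S^2 \times S^1$ summand being cancelled by a 3-handle carries the trivial $\SO(3)$-bundle, so each elementary map $\CSI(W_{3,k}, \Omega)$ is literally equal to its $\Omega = 0$ counterpart and the nontrivial bundle data is merely transported along passively.

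For independence of ordering, I would reduce to the case of swapping two adjacent 3-handles $W_{3,k}$ and $W_{3,k+1}$. Composing the elementary maps gives
\[
    \xi \otimes \eta_1 \otimes \eta_2 \;\longmapsto\; \begin{cases} \xi, & \eta_1 = \eta_2 = \Theta, \\ 0, & \text{otherwise,} \end{cases}
\]
which is manifestly symmetric in the two $S^2 \times S^1$ tensor factors, so transposing the order of the two handle attachments yields the same homomorphism. Because the homology class is $\omega_{k+1} \cup 0 \cup 0$ on both orderings by the Mayer--Vietoris observation above, the $\Omega$-decoration causes no additional complication.

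For invariance under handleslides amongst the 3-handles, I would turn $W_3$ upside down and identify 3-handle handleslides with 1-handle handleslides on the dual cobordism. The argument in the preceding theorem (the 1-handle analogue), combined with the fact that the $\Theta$ element is the unit of the algebra $\CSI(S^2 \times S^1, 0) \cong H^{3-\ast}(S^3)$ and is therefore preserved under the pair-of-pants map implementing the handleslide, carries over unchanged since the bundle on every relevant $S^2 \times S^1$ summand is trivial. The main obstacle is precisely this handleslide step: one must check that the quilt associated to the handleslide indeed realizes the algebra product and that introducing the homology class on the remaining $Y_{k+1}$-side commutes with this product; this follows from Theorem \ref{thm:omega-cerf-invariance}, since the handleslide takes place in a region disjoint from any geometric representative of $\Omega$, so the computation localizes to the same $\Omega = 0$ picture analyzed in \cite{horton1}.
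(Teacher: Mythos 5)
Your proposal is correct and follows essentially the same route as the paper, which simply notes that by the Mayer--Vietoris computation the bundle data on every $S^2\times S^1$ summand is forced to be trivial (so $\Omega$ is uniquely determined), and then appeals directly to the $\Omega = 0$ argument of \cite[Theorem 8.5]{horton1}. Your spelled-out check that the composite of two adjacent $3$-handle maps is symmetric in the two $S^2\times S^1$ factors, and your observation that handleslides reduce via duality to the already-established $1$-handle case with trivial bundle, are exactly the ingredients the paper is implicitly invoking.
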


Finally, consider the case where $(W_2, \Omega): (Y, \omega) \longrightarrow (Y', \omega')$ consists of a single $2$-handle attachment. A $2$-handle is attached to $Y \times \{1\} \subset Y \times [0,1]$ along a framed link $\bbL = (L, \lambda)$ to obtain $Y' \cong Y(\bbL)$. $H_1(Y; \F_2)$ and $H_1(Y';\F_2)$ differ only possibly in the homology class represented by the meridian of the link $L$; write $\omega_L$ for the mod $2$ homology class of $L$ in $Y$ and write $\omega_L^\prime$ for the mod $2$ homology class of $L$ in $Y'$. It is easy to see that the image of any $\Omega \in H_2(W_2, \partial W_2; \F_2)$ in $H_1(Y; \F_2) \oplus H_1(Y'; \F_2)$ under the boundary map in the long exact sequence for the pair $(W_2, \partial W_2)$ must have the form
\[
	(i_\ast \omega + \epsilon \omega_L, i_\ast^\prime \omega + \epsilon' \omega_L^\prime) \in H_1(Y; \F_2) \oplus H_1(Y'; \F_2),
\]
where $\epsilon, \epsilon' \in \F_2$, $\omega$ is a mod $2$ homology class in $Y \setminus L$, and $i: Y \setminus L \hookrightarrow Y$ and $i^\prime: Y \setminus L \hookrightarrow Y'$ are the inclusion maps.

\hide{
In order to define the $2$-handle maps, we follow constructions of Ozsv\'ath and Szab\'o and use Heegaard splittings which are subordinate to the framed link $\bbL$ in a suitable sense.

\begin{defn}
A {\bf bouquet} for the framed link $\bbL \subset Y$ of $n$ components is a $1$-complex $B(\bbL)$ embedded in $Y$ with
\begin{itemize}
	\item $n + 1$ $0$-cells given by a basepoint $y_0 \in Y \setminus L$ and basepoints $y_i \in L_i$.
	\item $2n$ $1$-cells given by the $L_i$ and $n$ paths $\delta_i \subset Y$ satisfying $\delta_i(0) = y_0$, $\delta_i(1) = y_i$, and $\delta_i([0,1)) \cap L = \varnothing$.
\end{itemize}
\end{defn}

Clearly a regular neighborhood of a bouquet $B(\bbL)$ is a genus $n$-handlebody and $L$ is unknotted inside this handlebody. This handlebody may not give a Heegaard splitting of $Y$, but there will be some genus $g \geq n$ Heegaard splitting of $Y$ with one of the handlebodies containing this regular neighborhood. Hence we introduce the following definition.

\begin{defn}
A Heegaard triple $(\Sigma_g, \bfalpha, \bfbeta, \bfgamma, z)$ is said to be {\bf subordinate to the bouquet} $B(\bbL)$ if the following conditions are satisfied:
\begin{itemize}
	\item Attaching $2$-handles along $\{\alpha_i\}_{i = 1}^g$ and $\{\beta_i\}_{i = n+1}^g$ gives the complement of $B(\bbL)$ in $Y$.
	\item $\gamma_i = \beta_i$ for $i = n+1, \dots, g$.
	\item After surgering out $\beta_{n+1}, \dots, \beta_g$, both $\beta_i$ and $\gamma_i$ lie in the obvious punctured torus $T_i \subset \Sigma_g$ corresponding to $L_i$ for $i = 1, \dots, n$.
	\item For $i = 1, \dots, n$ the $\beta_i$ are meridians for $L_i$ and the $\gamma_i$ are the longitudes of $L_i$ specified by $\lambda_i$.
\end{itemize}
\end{defn}

Note that for such a Heegaard triple, $\calH_{\alpha\beta} = (\Sigma_g, \bfalpha, \bfbeta, z)$ is a Heegaard diagram for $Y$, $\calH_{\alpha\gamma} = (\Sigma_g, \bfalpha, \bfgamma, z)$ is a Heegaard diagram for $Y(\bbL)$, and $\calH_{\beta\gamma} = (\Sigma_g, \bfbeta, \bfgamma, z)$ is a Heegaard diagram for $\#^{g-n} (S^2 \times S^1)$. 


We may now proceed to define $2$-handle cobordism maps. 
}

Let $Y$ be a closed, connected, oriented $3$-manifold and $\bbL$ a framed link in $Y$, write $W_2$ for the cobordism corresponding to attaching a $2$-handle to $Y$ along $\bbL$, and let $\Omega \in H_2(W_2, \partial W_2; \F_2)$ be any homology class. Recall from the above that
\[
	\partial \Omega = (i_\ast \omega + \epsilon \omega_L, i_\ast^\prime \omega + \epsilon' \omega_L^\prime) \in H_1(Y; \F_2) \oplus H_1(Y'; \F_2).
\]
Now, fixing a bouquet $B(\bbL)$ for $\bbL$ and a Heegaard triple $\calH = (\Sigma_g, \bfalpha, \bfbeta, \bfgamma, z)$ subordinate to $B(\bbL)$, we can represent the mod $2$ homology classes $i_\ast \omega + \epsilon \omega_L$ and $i_\ast^\prime \omega + \epsilon' \omega_L^\prime$ by a knot lying in the $\alpha$-handlebody of $Y$ and $Y(\bbL)$. It is clear that $L_\beta \cap L_\gamma \cong (S^3)^{g-n}$ is a clean intersection, and therefore
\[
	\HF(L_\beta, L_\gamma) = \SI(S^3, 0)^{\otimes n} \otimes \SI(S^2 \times S^1, 0)^{\otimes (g - n)} \cong H^{3-\ast}(S^3)^{\otimes(g - n)}
\]
as a unital algebra; write $\Theta_{\beta\gamma}$ for its unit. We then define the $2$-handle attachment map as a triangle map for the Lagrangians $L_\alpha^{i_\ast \omega + \epsilon \omega_L} = L_\alpha^{i_\ast^\prime \omega + \epsilon' \omega_L^\prime}$, $L_\beta$, and $L_\gamma$ (see Figure \ref{fig:2-handlemap}):
\[
	\CSI(W_2, \Omega): \CSI(Y, i_\ast\omega + \epsilon \omega_L) \longrightarrow \CSI(Y(\bbL), i_\ast^\prime\omega + \epsilon' \omega_L^\prime),
\]
\[
	\xi \mapsto \mu_2^{\alpha\beta\gamma}(\xi, \Theta_{\beta\gamma}).
\]

\begin{figure}
	\centering
	\includegraphics[scale=1]{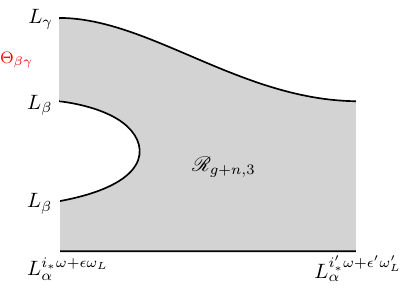}
	\caption{Triangles counted by the $2$-handle map $\CSI(W_2, \Omega)$.}
	\label{fig:2-handlemap}
\end{figure}

With the effect of $\Omega$ relegated to the Lagrangian for the $\alpha$-handlebody, proving that $\SI(W_2, \Omega)$ is independent of the choice of bouquet for $\bbL$, the ordering of the components of $L$, and handleslides among the components of $L$ proceeds exactly as usual, as the proofs for the $\Omega = 0$ case only used the algebra structure of $\HF(L_\beta, L_\gamma)$. Hence we conclude the following.

\begin{thm}
The map $\SI(W_2, \Omega): \SI(Y, \omega) \longrightarrow \SI(Y', \omega')$ induced by adding $2$-handles to a framed link $\bbL$ in $Y$ is independent of the choice of bouquet $B(\bbL)$ used to define it. Furthermore, if $\bbL_1$ and $\bbL_2$ are two framed links in $Y$ and $\Omega_k \in H_2(W(\bbL_k), \partial W(\bbL_k); \F_2)$ $(k = 1, 2)$ are mod $2$ homology classes with $\Omega_1|_{Y(\bbL_1)} = \Omega_2|_{Y(\bbL_1)}$, then
\[
	\SI(W(\bbL_1 \cup \bbL_2), \Omega_1 \cup \Omega_2) = \SI(W(\bbL_2), \Omega_2) \circ \SI(W(\bbL_1), \Omega_1)
\]
and if $\bbL'$ differs from $\bbL$ by handleslides amongst the components of $L$, then $\SI(W(\bbL'), \Omega) = \SI(W(\bbL), \Omega)$.
\end{thm}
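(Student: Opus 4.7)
The plan is to follow the structure of the proof in the $\Omega=0$ case from \cite{horton1} essentially verbatim, leveraging the observation already emphasized by the authors: the homology class $\Omega$ has been pushed entirely onto the $\alpha$-side Lagrangian $L_\alpha^{i_\ast\omega + \epsilon\omega_L}$, so every step that manipulates the $\beta$- and $\gamma$-curves only ever touches the Heegaard Floer unit $\Theta_{\beta\gamma}$, whose algebraic behavior is independent of $\Omega$. The core task is therefore to verify that the three invariance statements reduce to properties of the triangle product $\mu_2^{\alpha\beta\gamma}$ that depend only on the algebra structure on $\HF(L_\beta, L_\gamma)$ and on isotopy/handleslide invariance of individual Lagrangians, and then to quote the $\Omega = 0$ arguments.

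First, for independence of the bouquet $B(\bbL)$, I would observe that any two bouquets and subordinate Heegaard triples can be connected by a sequence of stabilizations of the Heegaard surface, isotopies, and handleslides among the $\alpha$-curves and among the ``free'' $\beta$- and $\gamma$-curves $\{\beta_i, \gamma_i\}_{i>n}$ (without ever touching the meridian/longitude pairs $\{\beta_i, \gamma_i\}_{i\le n}$ specifying $\bbL$). Each such move induces a quasi-isomorphism on $\CSI$ that intertwines $\mu_2^{\alpha\beta\gamma}$; invariance under the $\alpha$-moves uses only that $L_\alpha^{i_\ast\omega + \epsilon\omega_L}$ is a Lagrangian isotopic to itself after the move, which by Theorem \ref{thm:omega-cerf-invariance} holds exactly as in the $\omega=0$ case, and invariance under the $\beta\gamma$-moves uses only that these moves preserve the unit $\Theta_{\beta\gamma}$ in $\HF(L_\beta, L_\gamma)$.

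Next, for handleslides among the components of $L$, I would use the standard model of the associated elementary cobordism as a triangle map, exactly as in \cite{horton1}. A handleslide replaces $\bbL$ by a link $\bbL'$ and the Heegaard triple $(\Sigma_g, \bfalpha, \bfbeta, \bfgamma)$ by $(\Sigma_g, \bfalpha, \bfbeta, \bfgamma')$ where $\bfgamma'$ differs from $\bfgamma$ by a handleslide in the punctured torus region corresponding to the relevant component. The diffeomorphism $W(\bbL) \cong W(\bbL')$ reduces the claim to an identity between two triangle maps that differ by composing with the continuation map $\HF(L_\gamma, L_{\gamma'}) \to \HF(L_\gamma, L_\gamma)$ sending the $\Theta$-element to the unit — a statement proven purely on the $\beta\gamma$-side and therefore unaffected by the presence of $\Omega$.

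For the composition formula, I would use the standard associativity/quartet argument. Pick bouquets subordinate to $\bbL_1$ in $Y$ and $\bbL_2$ in $Y(\bbL_1)$ and build a Heegaard quadruple $(\Sigma_g, \bfalpha, \bfbeta, \bfgamma, \bfdelta, z)$ in which $\calH_{\alpha\beta}$ presents $Y$, $\calH_{\alpha\gamma}$ presents $Y(\bbL_1)$, $\calH_{\alpha\delta}$ presents $Y(\bbL_1 \cup \bbL_2)$, and each of $\calH_{\beta\gamma}$, $\calH_{\gamma\delta}$, $\calH_{\beta\delta}$ presents a connect sum of copies of $S^2 \times S^1$ and $S^3$. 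The $A_\infty$-associativity relation for $\mu_3^{\alpha\beta\gamma\delta}$ gives a chain homotopy between $\mu_2^{\alpha\gamma\delta}(\mu_2^{\alpha\beta\gamma}(\cdot,\Theta_{\beta\gamma}),\Theta_{\gamma\delta})$ and $\mu_2^{\alpha\beta\delta}(\cdot,\mu_2^{\beta\gamma\delta}(\Theta_{\beta\gamma},\Theta_{\gamma\delta}))$; the discussion in Section \ref{sect:S3-S2xS1} identifies $\mu_2^{\beta\gamma\delta}(\Theta_{\beta\gamma},\Theta_{\gamma\delta})$ with $\Theta_{\beta\delta}$, yielding the desired composition formula. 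The compatibility hypothesis $\Omega_1|_{Y(\bbL_1)} = \Omega_2|_{Y(\bbL_1)}$ is precisely what guarantees that the same Lagrangian $L_\alpha^{(\cdot)}$ is used for all three triangle maps in the quadruple, so that the associativity relation is syntactically well-formed.

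The main obstacle I anticipate is bookkeeping for the $\omega$-class through the Heegaard quadruple: one must check that, once $\omega$ is represented by a curve in a collar of the Heegaard surface pushed into the $\alpha$-handlebody, the classes $i_\ast\omega + \epsilon_j\omega_{L_j}$ assigned to each face of the quadruple glue to give $\Omega_1 \cup \Omega_2$ under the boundary map for $(W(\bbL_1 \cup \bbL_2), \partial W(\bbL_1 \cup \bbL_2))$. This is a Mayer--Vietoris calculation in mod $2$ homology, and once it is settled, the rest of the proof is a direct translation of \cite[Theorem 8.7 and its corollaries]{horton1}.
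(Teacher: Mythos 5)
Your proposal is correct and takes essentially the same route as the paper: the paper's proof is precisely the observation that, once $\Omega$ is absorbed into $L_\alpha^{i_\ast\omega + \epsilon\omega_L}$, the $\Omega = 0$ arguments from \cite{horton1} carry over verbatim because they only use the algebra structure of $\HF(L_\beta, L_\gamma)$. You have simply unpacked what that reduction entails (stabilization/isotopy/handleslide moves on the Heegaard triple, the $A_\infty$-associativity quartet argument, and the $\Theta$-composition identity from Section \ref{sect:S3-S2xS1}), and flagged the one place the paper leaves implicit — that the compatibility hypothesis $\Omega_1|_{Y(\bbL_1)} = \Omega_2|_{Y(\bbL_1)}$ is what makes the $\alpha$-side Lagrangian consistent across the three faces of the quadruple.
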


Now, given an arbitrary $4$-dimensional cobordism $W: Y \longrightarrow Y'$, we may represent it as a relative handlebody built on $Y$, with handles of index $1$, $2$, and $3$ added in order of increasing index. Hence we may write $W = W_1 \cup W_2 \cup W_3$, where each $W_k$ is a cobordism consisting entirely of $k$-handles. Furthermore, given a homology class $\Omega \in H_2(W, \partial W; \F_2)$, there exist homology classes $\Omega_k \in H_2(W_k, \partial W_k; \F_2)$ such that
\begin{align}
	\label{eqn:Omega-decomp}
	\Omega & = (i_1)_\ast \Omega_1 + (i_2)_\ast \Omega_2 + (i_3)_\ast \Omega_3.
\end{align}
We then define
\[
	\SI(W, \Omega) = \SI(W_3, \Omega_3) \circ \SI(W_2, \Omega_2) \circ \SI(W_1, \Omega_1).
\]
To prove that $\SI(W, \Omega): \SI(Y, \omega) \longrightarrow \SI(Y', \omega')$ as defined above is an invariant of $(W, \Omega)$, we must show that Kirby moves on the handles of $W$ as well as different choices of the $\Omega_k$ satisfying Equation (\ref{eqn:Omega-decomp}) result in the same homomorphism.

First we show independence of the decomposition of $\Omega$ as in Equation (\ref{eqn:Omega-decomp}). Recall from the discussion of $1$- and $3$-handle maps that we necessarily have $\Omega_1 = \omega \times [0,1]$ for $\omega = \Omega|_Y$ and $\Omega_3 = \omega' \times [0,1]$ for $\omega' = \Omega|_{Y^\prime}$. This forces the decomposition of Equation (\ref{eqn:Omega-decomp}) to be unique, so that it causes no trouble for the well-definedness of $\SI(W, \Omega)$.

As for invariance under Kirby moves, we have already checked those Kirby moves which involve handles all of the same index, and it only remains to show that cancelling pairs of handles does not change $\SI(W, \Omega)$. The proofs of the corresponding results when $\Omega = 0$ apply word-for-word here, so we omit them.

\begin{lem}
\label{lem:12handlecancel}
Let $W_1: Y \longrightarrow Y'$ be a cobordism corresponding to a single $1$-handle attachment, and let $W_2: Y' \longrightarrow Y''$ be a cobordism corresponding to a $2$-handle attachment along a framed knot $\bbK$ in $Y'$ that cancels the $1$-handle from $W_1$. Then for any $\Omega \in H_2(W_1 \cup W_2, \partial(W_1 \cup W_2); \F_2)$, $\SI(W_1 \cup W_2, \Omega)$ is the identity map.
\end{lem}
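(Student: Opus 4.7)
The plan is to reduce Lemma \ref{lem:12handlecancel} to the untwisted case treated as \cite[Lemma 8.15]{horton1} by isolating the homology-class data from the cancellation computation. Write $\omega = \partial\Omega|_Y$ and $\omega'' = \partial\Omega|_{Y''}$; since $W_1 \cup W_2$ is diffeomorphic to a trivial cobordism $Y \times [0,1]$, the identification $Y'' \cong Y$ carries $\omega''$ to $\omega$. Moreover, the forced decomposition $\Omega_1 = \omega \times [0,1]$ pointed out just above the statement means that the only nontrivial bundle data lives in $\Omega_2$, and a representative for $\omega$ may be chosen disjoint from the cancelling knot $\bbK$.

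First I would fix a Heegaard diagram $(\Sigma_g,\bfalpha,\bfbeta,z)$ for $Y$ and stabilize once at the basepoint to obtain a diagram for $Y' = Y\#(S^2\times S^1)$ whose new genus-one summand carries parallel meridian curves $\alpha_{g+1}$ and $\beta_{g+1}$. Then I would pick a bouquet for $\bbK$ and a subordinate Heegaard triple $(\Sigma_{g+1},\bfalpha,\bfbeta,\bfgamma,z)$ in which $\gamma_i = \beta_i$ for $i \leq g$ and $\gamma_{g+1}$ is the longitude of the stabilization torus dual to $\beta_{g+1}$. By Proposition \ref{thm:lagr-omega}, $\omega$ can be represented by a curve lying entirely in the $Y$-portion of $\Sigma_{g+1}$, so the twisted Lagrangian splits as $L_\alpha^\omega \cong L_\alpha^{\omega,Y} \times L_{\alpha_{g+1}}$, and correspondingly the unit factors as $\Theta_{\beta\gamma} = \Theta_{\beta\gamma}^{Y}\otimes\Theta_{g+1}$ under the K\"unneth splitting of $\CF(L_\beta,L_\gamma)$ induced by the stabilization. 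Invoking a K\"unneth-type decomposition of the triangle count $\mu_2^{\alpha\beta\gamma}(-,\Theta_{\beta\gamma})$ across the connect-sum, in the spirit of Theorem \ref{thm:connectsum}, then separates the $2$-handle map into a trivial triangle count on the $Y$-factor (where $\bfbeta = \bfgamma$ reduces the computation to the identity via small-triangle counts on $L_\alpha^{\omega,Y}\cap L_\beta$) and a triangle count on the stabilization torus which is exactly the $\Omega = 0$ cancellation verified in \cite[Lemma 8.15]{horton1}.

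The main obstacle, and the step I would dwell on, is rigorously justifying this product decomposition in the presence of the $\omega$-twisting, in particular verifying that under a suitably stretched almost complex structure no pseudoholomorphic triangle crosses the connect-sum neck. Since a geometric representative of $\omega$ can be arranged disjoint from the neck region, the twist affects only the $Y$-factor of the splitting, and the neck-stretching argument used for Theorem \ref{thm:connectsum} applies essentially verbatim; no new sign or moduli shift is introduced because the $\omega$-curve does not meet any attaching cycle on the stabilization torus. Once this splitting is in place, the composition $\SI(W_2,\Omega_2)\circ\SI(W_1,\Omega_1)$ factors as the identity on the $Y$-factor times the standard $1$-$2$ handle cancellation identity on the stabilization factor, which equals $\id_{\SI(Y,\omega)}$, completing the reduction.
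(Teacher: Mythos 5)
Your proposal is correct and matches the paper's approach, which reduces the lemma to the untwisted cancellation of \cite[Lemma 8.15]{horton1}. The paper simply asserts that the $\Omega = 0$ proof applies word-for-word; your K\"unneth-style splitting makes the reason explicit, namely that a geometric representative of $\omega$ can be taken to lie entirely on the $Y$-side of the stabilization neck (away from the cancelling knot and its torus), so the twist modifies only $L_\alpha^\omega$ and cannot affect the neck-stretched triangle count that carries out the cancellation.
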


\begin{lem}
Let $W_2: Y \longrightarrow Y'$ be a cobordism corresponding to attaching a $2$-handle along a framed knot $\bbK$ in $Y$, and let $W_3: Y' \longrightarrow Y''$ be a cobordism corresponding to attaching a $3$-handle to $Y'$ along some $2$-sphere such that the $3$-handle cancels the $2$-handle from $W_2$. Then for any $\Omega \in H_2(W_2 \cup W_3, \partial(W_2 \cup W_3); \F_2)$, $\SI(W_2 \cup W_3, \Omega)$ is the identity.
\end{lem}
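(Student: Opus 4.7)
The plan is to reduce the twisted assertion to the untwisted case \cite[Lemma 8.16]{horton1} by localizing the effect of $\Omega$ entirely onto the Lagrangian for the $\alpha$-handlebody. Since $W_2 \cup W_3$ is a cancelling pair, $W_2 \cup W_3 \cong Y \times [0,1]$ rel boundary, so any $\Omega \in H_2(W_2 \cup W_3, \partial; \F_2)$ is classified by $\omega = \Omega|_Y = \Omega|_{Y''} \in H_1(Y; \F_2)$ under the identification $Y'' \cong Y$. The uniqueness of the decomposition (\ref{eqn:Omega-decomp}) established earlier then forces $\Omega_2$ and $\Omega_3$ to have the boundary restrictions that the untwisted computation expects.

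Next I would set up a Heegaard triple $(\Sigma_g, \bfalpha, \bfbeta, \bfgamma, z)$ subordinate to a bouquet for $\bbK$, with the $\bfalpha\bfbeta$-diagram giving $Y$ and the $\bfalpha\bfgamma$-diagram giving $Y'$, and extend it to the Heegaard quadruple featuring a new family of $\bfdelta$-curves encoding the cancelling $3$-handle, exactly as in the proof of \cite[Lemma 8.16]{horton1}. By Proposition \ref{thm:lagr-omega} and the discussion preceding it, I may represent $\omega$ by a loop in a collar neighborhood of $\Sigma_g$, arranged to be disjoint from the attaching regions of all handles. Then the entire $\omega$-dependence is absorbed into the single replacement $L_\alpha \rightsquigarrow L_\alpha^\omega$, while the Lagrangians $L_\beta$, $L_\gamma$, $L_\delta$ and the $\Theta$-units $\Theta_{\beta\gamma}$ and $\Theta_{\gamma\delta}$ of the unital algebras $\HF(L_\beta, L_\gamma)$ and $\HF(L_\gamma, L_\delta)$ associated to the relevant $\#^k(S^2 \times S^1)$-summands are all unchanged.

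The composition $\SI(W_3,\Omega_3)\circ \SI(W_2,\Omega_2)$ is then computed by the same higher-polygon count as in \cite[Lemma 8.16]{horton1}, with $L_\alpha^\omega$ in place of $L_\alpha$ throughout. The algebraic simplification performed there---applying $A_\infty$ associativity to collapse the polygon map against the unital $\Theta$-elements and identifying the result with a continuation map---uses only the unital algebra structure of $\HF(L_\beta, L_\gamma)$ and $\HF(L_\gamma, L_\delta)$, together with Hamiltonian isotopies supported near the $\bfgamma$- and $\bfdelta$-curves. All of this structure is independent of $\omega$, so the same chain of manipulations yields the identity map of $\SI(Y,\omega)$.

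The main obstacle I anticipate is verifying that the Hamiltonian isotopies needed in the untwisted proof can be chosen to leave the collar neighborhood containing $\omega$ untouched. This follows from the homology class swap Cerf move (e) combined with Lemma \ref{lem:lagr-omega}, which together show that the precise placement of the representative of $\omega$ in the $\alpha$-handlebody is irrelevant up to Hamiltonian isotopy: given any prescribed isotopy appearing in the untwisted proof, we may first push $\omega$ off the support of that isotopy, then apply it, and then push $\omega$ back. With that flexibility, the word-for-word adaptation claimed in the text goes through.
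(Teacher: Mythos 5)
Your proof is correct and takes essentially the same route as the paper, which simply remarks that the untwisted proof of \cite[Lemma 8.16]{horton1} applies word-for-word and omits further detail. You have usefully spelled out the reason that transfer works---the entire $\omega$-dependence is absorbed into the replacement $L_\alpha \rightsquigarrow L_\alpha^\omega$ while the $\Theta$-units and the unital algebra structure of $\HF(L_\beta,L_\gamma)$ and $\HF(L_\gamma,L_\delta)$ are untouched---which is exactly the justification the paper's remark takes for granted.
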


As a result of the work done thus far in this section, we conclude the following.

\begin{thm}
For any compact, connected, oriented cobordism $W: Y \longrightarrow Y'$ of closed, connected, oriented $3$-manifolds $Y$ and $Y'$ and any homology class $\Omega \in H_2(W, \partial W; \F_2)$, there is a well-defined homomorphism $\SI(W, \Omega): \SI(Y, \Omega|_{Y}) \longrightarrow \SI(Y', \Omega|_{Y^\prime})$ that is an invariant of the pair $(W, \Omega)$.
\end{thm}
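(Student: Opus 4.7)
The plan is to assemble the theorem from the invariance results already established for each handle index together with the two cancellation lemmas. Recall that $\SI(W, \Omega)$ was defined by splitting $W$ into a composition $W_1 \cup W_2 \cup W_3$ of index-monotone handle cobordisms, choosing a decomposition $\Omega = (i_1)_\ast \Omega_1 + (i_2)_\ast \Omega_2 + (i_3)_\ast \Omega_3$, and composing the three handle maps. It therefore suffices to check that this composite is independent of (a) the choice of relative handle decomposition of $W$, and (b) the choice of decomposition of $\Omega$.

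First I would handle (b). The Mayer--Vietoris calculation from the $1$-handle subsection gives $H_2(W_1, \partial W_1; \F_2) \cong H_1(Y; \F_2)$, forcing $\Omega_1 = \omega \times [0,1]$ with $\omega = \Omega|_Y$; dually $\Omega_3 = \omega' \times [0,1]$ with $\omega' = \Omega|_{Y'}$. This pins down $\Omega_2 \in H_2(W_2, \partial W_2; \F_2)$ up to classes in the kernel of $(i_2)_\ast$. Since the $2$-handle triangle map depends on $\Omega_2$ only through the Lagrangian $L_\alpha^{i_\ast \omega + \epsilon\omega_L}$, which is determined by the boundary restriction $\partial \Omega_2$, such ambiguity does not affect $\SI(W_2, \Omega_2)$.

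Next I would handle (a) by invoking Cerf theory in dimension four: any two monotone relative handle decompositions of $W$ are connected by a finite sequence of (i) reorderings of handles of the same index together with isotopies of attaching regions, (ii) handleslides within a single index, and (iii) creations or cancellations of adjacent cancelling handle pairs. Moves (i) and (ii) preserve each $\SI(W_k, \Omega_k)$ by the within-index invariance theorems already established for $k = 1, 2, 3$. Move (iii), which is the only kind of move mixing different indices, is covered exactly by Lemma~\ref{lem:12handlecancel} together with its $2$-$3$ analogue.

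I expect the chief bookkeeping burden to be ensuring that after each Cerf move the induced decomposition of $\Omega$ transforms consistently with the rearrangement of handles. Because $\Omega_1$ and $\Omega_3$ are canonical (determined by $\omega$ and $\omega'$) and $\Omega_2$ is the ``remainder,'' this consistency follows from naturality of the long exact sequence of the pair $(W, \partial W)$ with respect to refinements of the handle decomposition, together with the fact that the cancellation lemmas hold for arbitrary $\Omega$ and not merely for $\Omega$ in some normal form. Chaining these ingredients together, $\SI(W_3, \Omega_3) \circ \SI(W_2, \Omega_2) \circ \SI(W_1, \Omega_1)$ is unchanged by every elementary modification, proving that $\SI(W, \Omega)$ is an invariant of $(W, \Omega)$.
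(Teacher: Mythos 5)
Your proposal follows essentially the same route as the paper: decompose $W$ into index-monotone handle cobordisms, invoke the within-index invariance theorems for reorderings and handleslides, and appeal to the two cancellation lemmas for creations and cancellations of $1$-$2$ and $2$-$3$ pairs. One small difference worth noting in your treatment of the choice of $\Omega$-decomposition: the paper simply asserts that pinning down $\Omega_1 = \omega \times [0,1]$ and $\Omega_3 = \omega' \times [0,1]$ ``forces the decomposition to be unique,'' whereas you allow for the possibility that $\Omega_2$ is determined only modulo $\ker (i_2)_\ast$ and argue that the ambiguity is invisible to $\SI(W_2, \Omega_2)$ because the triangle map factors through the boundary class. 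Your version is a bit more careful; just note that the reason $\partial\Omega_2$ is forced in the first place is the compatibility condition $\Omega_2|_{\partial_\pm W_2} = \Omega_1|_{\partial_+ W_1}$, $\Omega_3|_{\partial_- W_3}$ needed to compose the three handle maps, and that condition together with the determinacy of $\Omega_1, \Omega_3$ is what pins down $\partial\Omega_2$.
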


For our final effort of this section, we show that the assignment $(W, \Omega) \mapsto \SI(W, \Omega)$ is functorial, in the following sense:

\begin{thm}
Suppose $W: Y \longrightarrow Y'$ and $W': Y' \longrightarrow Y''$ are two compact, connected, oriented cobordisms of closed, connected, oriented $3$-manifolds and fix homology classes $\Omega \in H_2(W, \partial W; \F_2)$ and $\Omega' \in H_2(W', \partial W'; \F_2)$ such that $\Omega|_{Y^\prime} = \Omega'|_{Y^\prime}$. Then
\[
	\SI(W \cup W', \Omega \cup \Omega') = \SI(W', \Omega') \circ \SI(W, \Omega).
\]
\end{thm}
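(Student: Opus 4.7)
The plan is to realize both sides of the equation as the composition of handle-attachment maps for a common handle decomposition of $W \cup W'$, and then argue that reordering handles into increasing index order does not change the induced map. Starting from handle decompositions $W = W_1 \cup W_2 \cup W_3$ and $W' = W_1' \cup W_2' \cup W_3'$ in index order, with compatible decompositions of $\Omega$ and $\Omega'$, the right-hand side $\SI(W', \Omega') \circ \SI(W, \Omega)$ is by construction the composition of six handle-attachment maps corresponding to the concatenated (but not index-ordered) decomposition $W_1 \cup W_2 \cup W_3 \cup W_1' \cup W_2' \cup W_3'$ of $W \cup W'$. The task is to rearrange this into the index-ordered decomposition $(W_1 \cup W_1') \cup (W_2 \cup W_2') \cup (W_3 \cup W_3')$ required by the definition of $\SI(W \cup W', \Omega \cup \Omega')$.

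The reordering requires three types of elementary swap: a $3$-handle of $W$ past a $1$- or $2$-handle of $W'$, and a $2$-handle of $W$ past a $1$-handle of $W'$. Each swap is geometrically permissible, since in the intermediate $3$-dimensional boundary the attaching sphere of the later, lower-index handle can be isotoped off the belt sphere of the earlier, higher-index handle by a generic position argument. At the level of $\SI$-maps, the commutations involving $1$-handles or $3$-handles are straightforward because the $1$-handle map acts by $\xi \mapsto \xi \otimes \Theta$ and the $3$-handle map by $\xi \otimes \eta \mapsto \xi \delta_{\eta, \Theta}$ for the canonical element $\Theta \in \SI(S^2 \times S^1, 0)$; these operations manifestly commute with any handle-attachment map that acts only on the other tensor factor, as encoded by the K\"unneth principle of Theorem \ref{thm:connectsum}.

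The main obstacle is the swap of a $3$-handle of $W$ past a $2$-handle of $W'$. Here the $2$-handle map is defined as a triangle product $\mu_2^{\alpha\beta\gamma}(-, \Theta_{\beta\gamma})$ for a Heegaard triple subordinate to the $2$-handle attaching link, while the $3$-handle map contracts against $\Theta$ coming from an $S^2 \times S^1$ connect summand. By choosing the Heegaard triple so that the attaching $S^2$ of the $3$-handle lies in a region disjoint from the attaching curves of the triple (possible after an isotopy), Theorem \ref{thm:connectsum} together with the analysis of Section \ref{sect:S3-S2xS1} shows that the triangle map decomposes as a tensor product along the connect-sum splitting, so that contraction against $\Theta$ commutes with the triangle product on the remaining factor. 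Once this commutation is verified, the reordered composition corresponds to the index-ordered handle decomposition of $W \cup W'$ and hence equals $\SI(W \cup W', \Omega \cup \Omega')$ by definition. The compatibility of the relative homology class decomposition is automatic: the pieces $\Omega_1$ and $\Omega_3$ are forced by their boundary restrictions as observed just before Lemma \ref{lem:12handlecancel}, so the decomposition of $\Omega \cup \Omega'$ on each handle group is coherent across the reordering.
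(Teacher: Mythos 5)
Your proposal is correct and follows essentially the same strategy as the paper: reduce to commutation of handle-attachment maps of different indices when reordering the concatenated decomposition $W_1 \cup W_2 \cup W_3 \cup W_1' \cup W_2' \cup W_3'$ into index order. The paper simply cites \cite[Theorem 8.18]{horton1} for this (stating that the $1$--$2$ and $2$--$3$ commutations suffice), whereas you spell out the three needed swaps including the trivial $3$-past-$1$ case and sketch the general-position and K\"unneth arguments; the substance is the same.
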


\begin{proof}
To prove the theorem, we must show that $1$- and $2$-handle maps commute with one another, and that $2$- and $3$-handle maps commute with one another. Once again, the proof for the $\Omega = 0$ case carries over directly.
\end{proof}
\section{Cobordism Maps in the Surgery Exact Triangle}

In this section, we show that the maps in the surgery exact triangle of \cite{horton1} are in fact $2$-handle cobordism maps.

\subsection{Review of the Exact Triangle}

The surgery exact triangle is induced on the chain level by the sequence of maps
\[
	\CF(L_0, \underline{L}, V) \otimes \CF(V^T, L_1) \xrightarrow{~C\Phi_0~} \CF(L_0, \underline{L}, L_1) \xrightarrow{~C\Phi_1~} \CF(L_0, \underline{L}, \tau_V L_1).
\]
$C\Phi_0$ is defined by a count of quilted triangles as in Figure \ref{fig:CPhi0}, and $C\Phi_1$ is defined by a count of pseudoholomorphic sections of a quilted Lefschetz fibration as in Figure \ref{fig:CPhi1}.

\begin{figure}[h]
\centering
\begin{minipage}{.45\textwidth}
	\includegraphics[scale=.95]{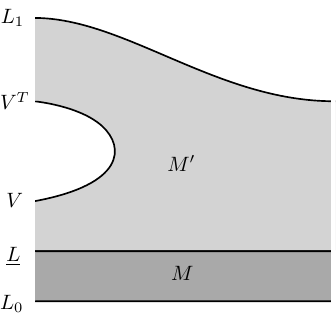}
	\caption{Quilts counted by the map $C\Phi_0$.}
	\label{fig:CPhi0}
\end{minipage}
\begin{minipage}{.45\textwidth}
	\includegraphics[scale=.95]{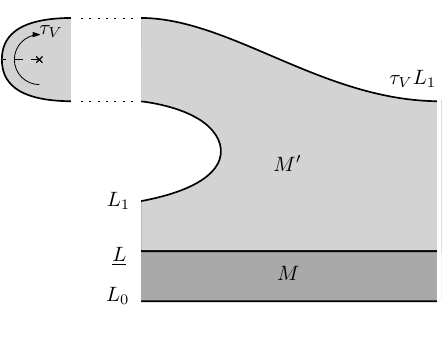}
	\caption{Quilted Lefschetz fibration defining $C\Phi_1$.}
	\label{fig:CPhi1}
\end{minipage}
\end{figure}

To establish the exact triangle, we need a chain nullhomotopy of the composite $C\Phi_1 \circ C\Phi_0$. In our case, this explicit nullhomotopy is the map
\[
	h: \CF(L_0, \underline{L}, V) \otimes \CF(V^T, L_1) \longrightarrow \CF(L_0, \underline{L}, \tau_V L_1),
\]
\[
	h(x \otimes y) = \tilde{\mu}_2(x, k(y)) + \tilde{\mu}_3(x,y, c),
\]
where $\tilde{\mu}_2$ (respectively $\tilde{\mu}_3$) is the quilted triangle (respectively quilted rectangle) map pictured in Figure \ref{fig:mu2tilde} (resp. Figure \ref{fig:mu3tilde}), and $k$ is defined by counting pseudoholomorphic sections of the one-parameter family of Lefschetz fibrations interpolating between the two Lefschetz fibrations pictured in Figure \ref{fig:nullhomotopy-k}. Although in this section we only work with maps on the homology-level exact triangle, we bring up the chain nullhomotopy $h$ as it will appear when working with the link surgeries spectral sequence in Section \ref{sect:linksurgeries}.

\begin{figure}[h]
	\centering
\begin{minipage}{.45\textwidth}
	\hspace{.15in}
	\includegraphics{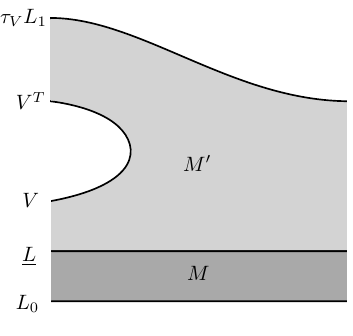}
	\caption{Quilted triangle map $\tilde{\mu}_2$.}
	\label{fig:mu2tilde}
\end{minipage}
\begin{minipage}{.45\textwidth}
	\hspace{.25in}
	\includegraphics{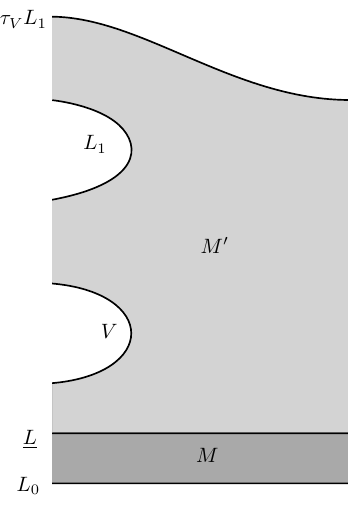}
	\caption{Quilted rectangle map $\tilde{\mu}_3$.}
	\label{fig:mu3tilde}
\end{minipage}
\end{figure}

\begin{figure}[h]
	\centering
	\includegraphics[scale=.9]{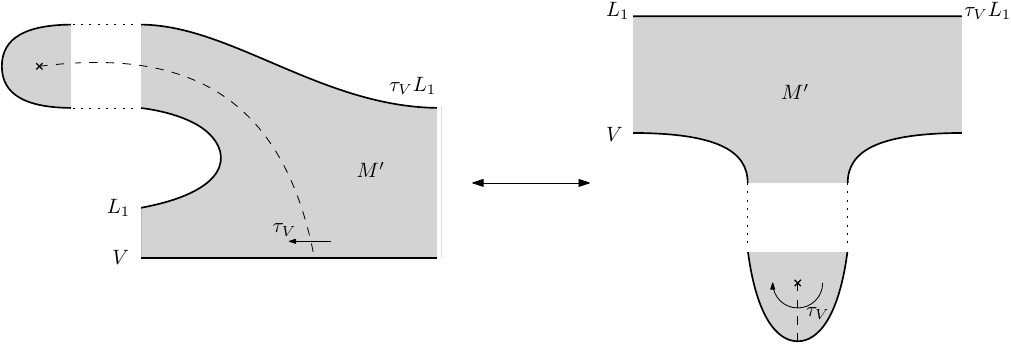}
	\caption{The $1$-parameter family of Lefschetz fibrations defining the chain nullhomotopy $k$.}
	\label{fig:nullhomotopy-k}
\end{figure}

We now relate the abstract setup of the exact triangle to the topology of Dehn surgery. Let $Y$ be a closed, oriented $3$-manifold with a framed knot $\bbK = (K, \lambda)$. Choose a Heegaard triple $(\Sigma_{g+1}, \bfalpha, \bfbeta, \bfgamma, z)$ subordinate to a bouquet for $\bbK$, and another triple $(\Sigma_{g+1}, \bfalpha, \bfbeta, \bfdelta, z)$ subordinate to $(K, \lambda + \mu)$. We then have a quadruple diagram $\calH = (\Sigma_{g+1}, \bfalpha, \bfbeta, \bfgamma, \bfdelta, z)$ satisfying the following:
\begin{itemize}
	\item $\beta_{g+1}$ is a meridian for $K$.
	\item $\gamma_{g+1}$ represents the framing $\lambda$ of $K$ and $\delta_{g+1}$ represents the framing $\lambda + \mu$ of $K$.
	\item $(\Sigma_{g+1}, \{\alpha_1, \dots, \alpha_{g+1}\}, \{\beta_1, \dots, \beta_{g}\}, z)$ represents the complement of $K$ in $Y$.
	\item $\gamma_k = \beta_k$ and $\delta_k = \beta_k$ for $k \neq g+1$.
	\item $\calH_{\alpha\beta}$ represents $Y$, $\calH_{\alpha\gamma}$ represents $Y_\lambda$, and $\calH_{\alpha\delta}$ represents $Y_{\lambda + \mu}$.
\end{itemize}

Let $\underline{L}$ be the Lagrangian correspondence coming from the first $g$ $\beta$- (equivalently, $\gamma$- or $\delta$-) handle attachments, $L_{\beta_{g+1}}$, $L_{\gamma_{g+1}}$, and $L_{\delta_{g+1}}$ be the Lagrangians in $\scrR_{1,3}$ corresponding to attaching the final handle of $\calH_{\alpha\beta}$, $\calH_{\alpha\gamma}$, or $\calH_{\alpha\delta}$, and let
\[
	V = \{[\rho] \in \scrR_{1,3} \mid \rho(\beta_{g+1}) = -I\}.
\]
Then on the level of Lagrangians $L_{\delta_{g+1}} = \tau_V L_{\gamma_{g+1}}$ and the exact triangle translates to
\[
	\xymatrix{\SI(Y, \omega_K) \ar[rr]^{\Phi_0} & & \SI(Y_\lambda(K)) \ar[dl]^{\Phi_1} \\
	 & \SI(Y_{\lambda + \mu}(K)) \ar[ul] & }
\]
where $\omega_K$ is the mod $2$ homology class of $K$ in $Y$. The goal of this section is to identify each map in this triangle with the $2$-handle cobordism map corresponding to each surgery.

\subsection{From Pseudoholomorphic Sections to Pseudoholomorphic Polygons}

In this Section, we identify the maps in the surgery exact triangle with the homomorphisms induced by the relevant $2$-handle attachments. Since one of the maps involves counting pseudoholomorphic sections of a Lefschetz fibration, we will need to show that this is equal to the count of pseudoholomorphic triangles defining our cobordism maps. Furthermore, it will also be useful for us later to interpret the chain homotopy $h$ appearing in the proof of the exact triangle as a count of certain pseudoholomorphic rectangles.

We will use the setup from the previous subsection where all Lagrangians are defined using the Heegaard quadruple $\calH = (\Sigma_{g+1}, \bfalpha, \bfbeta, \bfgamma, \bfdelta, z)$, although some of the results here apply in more general situations.

\begin{figure}[h]
	\centering
	\includegraphics[scale=1.25]{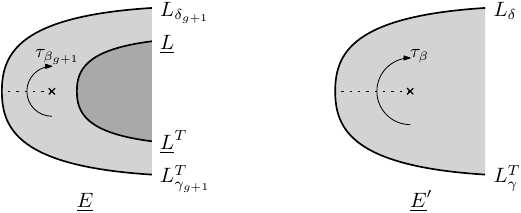}
	\caption{Two quilted Lefschetz-Bott fibrations.}
	\label{fig:lefschetz-unit}
\end{figure}

All results in this section rely on the following technical lemma:

\begin{lem}
\label{lem:lefschetz-unit}
Let $\underline{E}$ and $\underline{E}'$ be the two quilted Lefschetz-Bott fibrations pictured in Figure \ref{fig:lefschetz-unit}. Then the relative invariant $\Phi_{\underline{E}}$ is identified with the relative invariant $\Phi_{\underline{E}^\prime}$ under the isomorphism $\HF(L_{\gamma_{g+1}}^T, \underline{L}^T, \underline{L}, L_{\delta_{g+1}}) \cong \HF(L_{\gamma_{g+1}}^T \circ \underline{L}^T, \underline{L} \circ L_{\delta_{g+1}}) = \HF(L_\gamma^T, L_\delta)$. In other words, the following diagram commutes:
\[
	\xymatrix{ & \HF(L_{\gamma_{g+1}}^T, \underline{L}^T, \underline{L}, L_{\delta_{g+1}}) \ar[dd]^{~\substack{\mathrm{geometric} \\ \mathrm{composition}}} \\ \HF(\mathrm{pt}) \ar[ur]^{\Phi_{\underline{E}}} \ar[dr]_{\Phi_{\underline{E}'}} & \\ & \HF(L_\gamma^T, L_\delta) }
\]
\end{lem}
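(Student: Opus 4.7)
The plan is to deduce this from the Wehrheim--Woodward strip-shrinking theorem, which compares relative invariants of quilted fibrations with those of their ``strip-shrunk'' counterparts whenever the relevant geometric compositions of Lagrangian correspondences are embedded. By inspection of Figure \ref{fig:lefschetz-unit}, the fibration $\underline{E}'$ should be obtained from $\underline{E}$ by shrinking to zero width the interior seams labeled by $\underline{L}$, thereby replacing the multi-component quilted Lefschetz-Bott fibration with a single (unquilted) Lefschetz-Bott fibration with fibers $\M_{g+1,3}$ and Lagrangian boundary conditions $L_\gamma^T, L_\delta$ determined by the compositions $L_\gamma = \underline{L} \circ L_{\gamma_{g+1}}$ and $L_\delta = \underline{L} \circ L_{\delta_{g+1}}$.

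The first step is to verify embeddedness of these geometric compositions, which follows immediately from the analogue of Theorem \ref{thm:omega-cerf-invariance} together with the proof in Section 4 of \cite{horton1} applied to the full Heegaard diagrams $\calH_{\alpha\gamma}$ and $\calH_{\alpha\delta}$: the Lagrangians $L_\gamma$ and $L_\delta$ arise as embedded copies of $(S^3)^{g+1}$ inside $\M_{g+1,3}$, so in particular $\underline{L} \circ L_{\gamma_{g+1}}$ and $\underline{L} \circ L_{\delta_{g+1}}$ are embedded. The second step is to interpolate between $\underline{E}$ and $\underline{E}'$ by a one-parameter family of quilted Lefschetz-Bott fibrations $\underline{E}_r$, $r \in (0,1]$, in which the width of the seams labeled by $\underline{L}$ is $r$, so that $\underline{E}_1 = \underline{E}$ and the limit as $r \to 0^+$ is $\underline{E}'$. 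The strip-shrinking machinery then yields, for sufficiently small $r$, an energy- and orientation-preserving bijection (via a figure-eight bubbling analysis localized near the collapsed seams) between isolated pseudoholomorphic sections of $\underline{E}_r$ and isolated pseudoholomorphic sections of $\underline{E}'$. Counting these gives $\Phi_{\underline{E}_r} = \Phi_{\underline{E}'}$ on the chain level, and invariance of relative invariants under deformations of the quilted structure gives $\Phi_{\underline{E}} = \Phi_{\underline{E}_r}$.

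The main technical obstacle is that the classical Wehrheim--Woodward strip-shrinking theorem is formulated for pseudoholomorphic quilts into fibrations with constant symplectic fibers, rather than Lefschetz-Bott fibrations. Fortunately, in our geometry the singular fiber sits strictly in the interior of a single component of the fibration, well away from the seams labeled by $\underline{L}$, so the strip-shrinking analysis is entirely local near those seams and decouples from the standard analysis of pseudoholomorphic sections near the singular fiber. The required extension of strip-shrinking to this setting is essentially the one used implicitly in \cite{fiberedtriangle} (and already invoked in Section \ref{subsect:triangle} to set up Seidel's exact triangle); what remains is to check that the hypotheses on monotonicity and embedded geometric composition are met, both of which have been established in the preceding sections. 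Once this compatibility is in place, the commutativity of the diagram in the statement is automatic.
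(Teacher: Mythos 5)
Your proposal takes a genuinely different route from the paper's proof, and while the idea is sound in spirit, it leaves the load-bearing technical step as a promissory note. You try to prove the identification abstractly, by arguing that strip-shrinking (in the sense of Wehrheim--Woodward) is compatible with the relative invariants of quilted Lefschetz-Bott fibrations; the paper instead sidesteps any such general compatibility statement by simply \emph{computing} both relative invariants and observing that the answers match. Concretely, the paper shows $\Phi_{\underline{E}'}(\mathrm{pt}) = \Theta_{\gamma\delta}$ directly: equip $\underline{E}'$ with a horizontal almost complex structure, note that the only index-zero horizontal section corresponds to $\Theta_{\gamma\delta}$, and use monotonicity to exclude non-horizontal contributions. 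Then it computes $\Phi_{\underline{E}}(\mathrm{pt}) = \theta \times \Theta_{\gamma\delta}$ by cutting $\underline{E}$ into two simpler quilted pieces $\underline{E}_1$, $\underline{E}_2$ (Figure \ref{fig:lefschetz-unit-proof}) and composing: $\Phi_{\underline{E}_1}(\mathrm{pt}) = \theta$ by the same horizontal-section argument, and $\Phi_{\underline{E}_2}(\theta) = \theta \times \Theta_{\gamma\delta}$ by the discussion in Section \ref{sect:S3-S2xS1} on quilt maps for $(\#^n S^3)\#(\#^{g-n} S^2 \times S^1)$. Finally it observes that $\theta \times \Theta_{\gamma\delta}$ maps to $\Theta_{\gamma\delta}$ under geometric composition.

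The gap in your approach is that you invoke an extension of strip-shrinking to quilted Lefschetz-Bott fibrations as though it were available off the shelf, and argue that because the singular fiber lies away from the collapsing seam the analysis ``decouples.'' That heuristic is plausible, but it is exactly the part that would require careful analysis: one needs uniform $C^0$ and energy control near the seam as $r \to 0^+$ \emph{simultaneously} with the compactness analysis near the critical fiber, a ruling-out of figure-eight bubbles in the limit, and a gluing/surjectivity statement matching moduli spaces of sections for small $r$ with those of the composed fibration. None of this is automatic, and \cite{fiberedtriangle} does not state such a compatibility in the generality you would need. Your approach would give a more conceptually satisfying proof if that machinery were developed, but as written it substitutes a hard analytic lemma for the paper's elementary explicit computation. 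If you want to keep the strip-shrinking strategy, you should at minimum reduce it to the case of a fibration with no critical values over a neighborhood of the seams (by cutting as the paper does) so that the relevant bubbling analysis really is the standard Wehrheim--Woodward one; at that point, however, you are very close to reproducing the paper's decomposition argument anyway.
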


\begin{proof}
First, we determine $\Phi_{\underline{E}^\prime}(\text{pt})$. Equip $\underline{E}'$ with a horizontal almost complex structure. Of all possible horizontal sections of $\underline{E}'$, only the one corresponding to $\Theta_{\gamma\delta}$ has index zero. By monotonicity, any non-horizontal section of $\underline{E}'$ will have strictly positive index and hence does not contribute to $\Phi_{\underline{E}^\prime}(\text{pt})$. Therefore
\begin{align}
	\Phi_{\underline{E}^\prime}(\text{pt}) = \Theta_{\gamma\delta}.
\end{align}

\begin{figure}[h]
	\centering
	\includegraphics[scale=1.25]{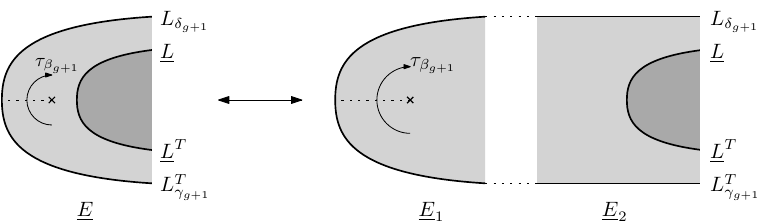}
	\caption{Breaking the quilted Lefschetz fibration $\underline{S}$ into simpler pieces.}
	\label{fig:lefschetz-unit-proof}
\end{figure}

To determine $\Phi_{\underline{E}}(\text{pt})$, we consider $\underline{E}$ as a gluing of two quilted Lefschetz fibrations $\underline{E}_1$ and $\underline{E}_2$ as indicated in Figure \ref{fig:lefschetz-unit-proof}. For sufficiently long gluing lengths, $C\Phi_{\underline{E}}(\text{pt}) = (C\Phi_{\underline{E}_2} \circ C\Phi_{\underline{E}_1})(\text{pt})$ on the chain level, so this decomposition allows us to count pseudoholomorphic sections on each piece of the gluing separately. By reasoning analogous to the computation of $\Phi_{\underline{E}^\prime}(\text{pt})$, we have that $\Phi_{\underline{E}_1}(\text{pt}) = \theta \in \HF(L_{\gamma_{g+1}}^T, L_{\delta_{g+1}})$. On the other hand, by the discussion in Section \ref{sect:S3-S2xS1}, $\Phi_{\underline{E}_2}(\theta) = \theta \times \Theta_{\gamma\delta} \in \HF(L_{\gamma_{g+1}}^T, \underline{L}^T, \underline{L}, L_{\delta_{g+1}})$. Therefore
\begin{align}
	\Phi_{\underline{E}}(\text{pt}) = \theta \times \Theta_{\gamma\delta}.
\end{align}

To complete the proof, simply note that under geometric composition, $\theta \times \Theta_{\gamma\delta}$ maps to $\Theta_{\gamma\delta}$. Hence the desired triangle commutes.
\end{proof}

With the above lemma in place, we can begin interpreting the chain maps $C\Phi_0$ and $C\Phi_1$ in the surgery exact triangle as counts of pseudoholomorphic triangles with certain specified vertices, and we can also show that the chain homotopy $h$ can be considered as a count of pseudoholomorphic rectangles with certain specified vertices. Let us start with $C\Phi_1$. A basic manipulation of quilted surfaces (see Figure \ref{fig:CPhi1-triangle}) shows that there is a commutative diagram
\[
	\xymatrix{\CF(L_\alpha, \underline{L}, L_{\gamma_{g+1}}) \ar[d]_{\cong} \ar[r]^{C\Phi_1} & \CF(L_\alpha, \underline{L}, L_{\delta_{g+1}}) \ar[d]^{\cong} \\
	\CF(L_\alpha, L_\gamma) \ar[r]_{\mu_2(\cdot,c')} & \CF(L_\alpha, L_\delta)}
\]
where the vertical maps are induced by the geometric compositions $\underline{L} \circ L_{\gamma_{g+1}} = L_\gamma$ and $\underline{L} \circ L_{\delta_{g+1}} = L_\delta$. Now $c' \in \CF(L_\gamma, L_\delta)$ is the unit for the algebra $\CF(L_\gamma, L_\delta) \cong H^{3-\ast}(\text{pt}; \Z) \otimes H^{3-\ast}(S^3; \Z)^{\otimes g}$, and is hence identified with $\Theta_{\gamma\delta}$. Furthermore, $\mu_2(\cdot, \Theta_{\gamma\delta})$ is precisely the map induced by the standard $2$-handle cobordism from $Y_\lambda(K)$ to $Y_{\lambda + \mu}(K)$. We conclude the following:

\begin{figure}
	\centering
	\includegraphics[scale=.9]{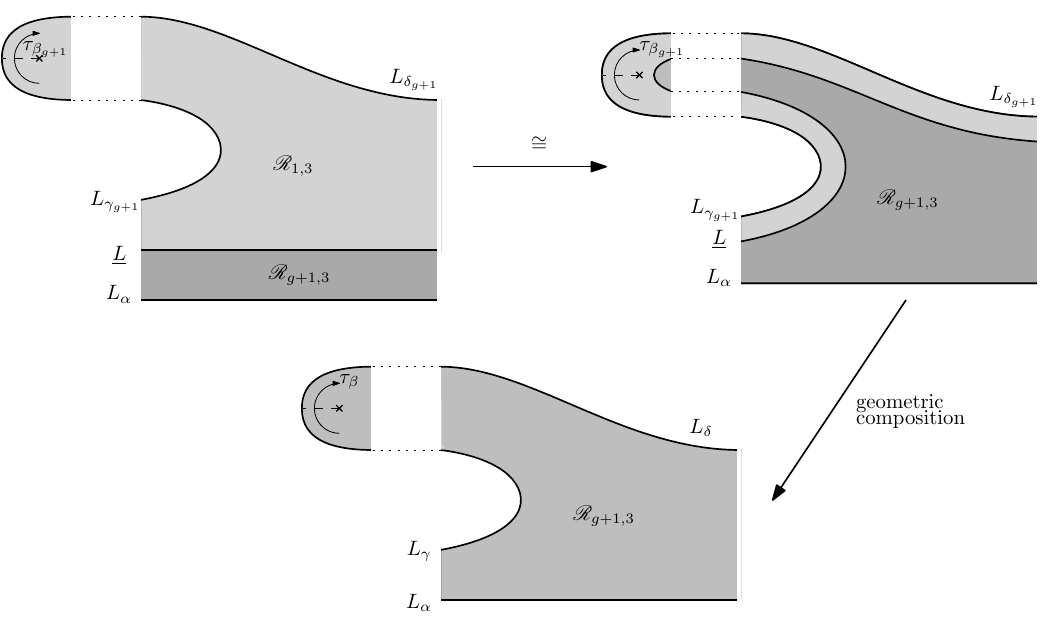}
	\caption{Identifying $C\Phi_1$ with a classical triangle map.}
	\label{fig:CPhi1-triangle}
\end{figure}

\begin{prop}
The map $C\Phi_1$ in the surgery exact triangle may be identified with the cobordism map $F_{W_{\lambda+\mu}}: \CF(L_\alpha, L_\gamma) \longrightarrow \CF(L_\alpha, L_\delta)$, where $W_{\lambda+\mu}$ is the standard $2$-handle cobordism between $Y_\lambda(K)$ and $Y_{\lambda + \mu}(K)$. In other words, there is a commutative diagram
\[
	\xymatrix{\CF(L_\alpha, \underline{L}, L_{\gamma_{g+1}}) \ar[d]_{\cong} \ar[r]^{C\Phi_1} & \CF(L_\alpha, \underline{L}, L_{\delta_{g+1}}) \ar[d]^{\cong} \\
	\CF(L_\alpha, L_\gamma) \ar[r]_{F_{W_{\lambda+\mu}}} & \CF(L_\alpha, L_\delta)}
\]
where the vertical maps are induced by geometric composition.
\label{Phi1Triangle}
\end{prop}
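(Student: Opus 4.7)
The plan is to reduce the quilted Lefschetz fibration defining $C\Phi_1$ to an ordinary (unquilted) Lefschetz fibration over a disk with strip-like ends via geometric composition, and then to identify the resulting relative invariant with the triangle cobordism map. First, I would apply the strip-shrinking/geometric composition theorem of Wehrheim--Woodward to the quilted seam carrying the correspondence $\underline{L}$ on both sides of the fibration in Figure \ref{fig:CPhi1}. Since the geometric compositions $\underline{L}\circ L_{\gamma_{g+1}} = L_\gamma$ and $\underline{L}\circ L_{\delta_{g+1}} = L_\delta$ are embedded (the former is the standard Lagrangian for the Heegaard diagram $\calH_{\alpha\gamma}$ of $Y_\lambda(K)$, and likewise for $\delta$), this shrinking produces a chain-level isomorphism $\CF(L_\alpha,\underline{L},L_{\gamma_{g+1}}) \cong \CF(L_\alpha,L_\gamma)$ and similarly for $\delta$, giving the vertical arrows of the diagram.

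Under these identifications, the quilted Lefschetz fibration defining $C\Phi_1$ degenerates to an ordinary Lefschetz fibration over a disk with three strip-like ends (two with boundary conditions $L_\alpha$ on the ``incoming'' sides, one end with boundary $L_\gamma$, and one with boundary $L_\delta$), with a single singular fiber whose vanishing cycle is the image of $V$. By definition, such a fibration computes $\mu_2(\,\cdot\,, c')$, where $c'\in \CF(L_\gamma,L_\delta)$ is the distinguished Floer chain produced by the standard Lefschetz fibration $E^c\to\mathbb{D}^c$ with moving Lagrangian boundary conditions from $L_\gamma$ to $L_\delta = \tau_V L_\gamma$ (as recalled in Subsection \ref{subsect:triangle}, but transferred from $\M_{1,3}$ to $\M_{g+1,3}$ via the same geometric composition).

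The crucial remaining step is to show $c' = \Theta_{\gamma\delta}$, the unit of the algebra $\CF(L_\gamma, L_\delta)\cong H^{3-\ast}(\mathrm{pt};\Z)\otimes H^{3-\ast}(S^3;\Z)^{\otimes g}$. This is precisely the content available from Lemma \ref{lem:lefschetz-unit}: the quilted fibration $\underline{E}$ there has relative invariant $\theta\times\Theta_{\gamma\delta}$, whose geometric composition is $\Theta_{\gamma\delta}$. The same computation applies verbatim to our situation: equip the relevant Lefschetz fibration with a horizontal almost complex structure, observe that the only index-zero horizontal section is the one whose boundary trace is $\Theta_{\gamma\delta}$, and invoke monotonicity to discard non-horizontal sections, which must have strictly positive energy and hence strictly positive index.

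Finally, $\mu_2(\,\cdot\,,\Theta_{\gamma\delta})$ is by definition the $2$-handle cobordism map $F_{W_{\lambda+\mu}}$ for the Heegaard triple $\calH_{\alpha\gamma\delta}$ subordinate to a bouquet for the framed knot $(K,\lambda+\mu)$, since $\gamma$ and $\delta$ agree outside the final component and $\delta_{g+1}$ represents the framing $\lambda+\mu$. The commutativity of the diagram follows. The main obstacle I expect is the strip-shrinking step: one must verify that the quilted moduli problem and its geometric-composition counterpart have the same counts of pseudoholomorphic sections, uniformly in the Lefschetz singularity, which requires ensuring embeddedness and monotonicity of the geometric compositions are preserved throughout the neck-stretching/strip-shrinking degeneration — standard but delicate in the Lefschetz setting.
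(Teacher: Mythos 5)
Your proposal is correct and reaches the same conclusion via essentially the same ingredients — the identification $c' = \Theta_{\gamma\delta}$ via the computation in Lemma~\ref{lem:lefschetz-unit} and the observation that $\mu_2(\cdot, \Theta_{\gamma\delta})$ is by definition $F_{W_{\lambda+\mu}}$. The one notable difference in strategy is the order of operations: you apply Wehrheim--Woodward strip-shrinking directly to the full quilted Lefschetz fibration (through the singular fiber), whereas the paper first degenerates the fibration along a long neck (Figure~\ref{fig:CPhi1-triangle}) into a quilted triangle and the one-ended quilted Lefschetz fibration $\underline{E}$, and only \emph{then} applies geometric composition separately to each piece. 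This matters because strip-shrinking through a Lefschetz singularity is not literally the Wehrheim--Woodward theorem and requires a bespoke gluing/compactness analysis near the vanishing cycle — which you correctly flag as the delicate step. The paper's decomposition sidesteps this: the standard strip-shrinking result applies to the nonsingular quilted triangle, and the singular piece is handled by Lemma~\ref{lem:lefschetz-unit}, which is itself proven by a direct count of horizontal sections on both sides rather than by an abstract strip-shrinking argument. In short, both routes work, but the paper's ordering replaces the ``strip-shrinking uniformly through the singularity'' claim you'd need to justify with a self-contained computation already isolated in the lemma.
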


Now $C\Phi_0$ is already obviously a (quilted) triangle map, but \emph{a priori} it is not of the same form as one of the classical triangle maps associated to a $2$-handle cobordism and it also seems to lack a fixed vertex. Note that the upper left corner of the triangle in Figure \ref{fig:CPhi0} must be mapped to an intersection point of $L_{\gamma_{g+1}}$ and $V^T$ in $\scrR_{1,3}$. But these Lagrangians intersect only at the representation $[\rho_0] = [-I, \pm I, \bfi, \bfj, -\bfk]$ (where the $\pm$ depends on the framing $\lambda$ of the knot $K \subset Y$). Therefore the given vertex is \emph{forced} to be fixed.

\begin{figure}[t]
	\centering
	\includegraphics{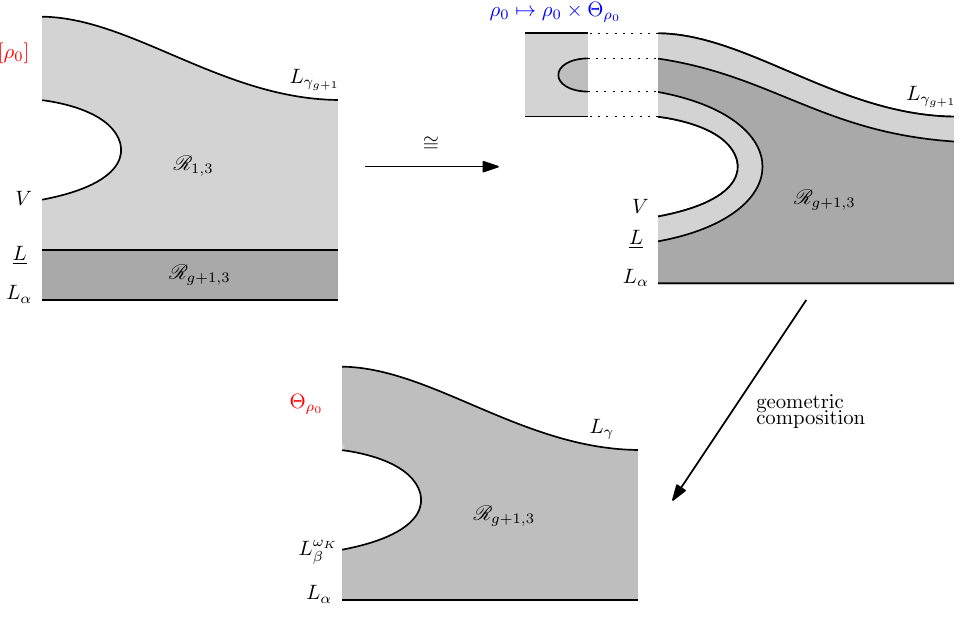}
	\caption{Identifying $C\Phi_0$ with a classical triangle map.}
	\label{fig:CPhi0-triangle}
\end{figure}

We still wish to perform some quilt manipulations similar to those done for $C\Phi_1$ to pass from a quilted triangle to just a triangle mapping to $\scrR_{g+1,3}$. The required manipulations are pictured in Figure \ref{fig:CPhi0-triangle}. The first indicated move sufficiently stretches the quilt so that we may consider it as a composition of the two pictured quilt maps in the upper right corner of the figure. The first quilt in the composition is the map $\rho_0 \mapsto \rho_0 \times \Theta_{\rho_0}$ discussed at the end of Section \ref{sect:S3-S2xS1}. The input for the upper left corner of the second quilt in the composition is forced to be $\rho_0 \times \Theta_{\rho_0}$ since $V^T \cap L_{\gamma_{g+1}} = \{[\rho_0]\}$. Finally, under the geometric composition $\CF(V^T, \underline{L}^T, \underline{L}, L_{\gamma_{g+1}}) \cong \CF(L_\beta^{\omega_K}, L_\gamma)$, $\rho_0 \times \Theta_{\rho_0}$ is mapped to $\Theta_{\rho_0}$. Therefore we can conclude the following:

\begin{prop}
The map $C\Phi_0$ in the surgery exact triangle may be identified with the cobordism map $F_{W_{\lambda}}: \CF(L_\alpha, L_\beta^{\omega_K}) \longrightarrow \CF(L_\alpha, L_\gamma)$, where $W_{\lambda}$ is the standard $2$-handle cobordism between $Y$ and $Y_{\lambda}(K)$. In other words, there is a commutative diagram
\[
	\xymatrix{\CF(L_\alpha, \underline{L}, V) \otimes \CF(V^T, L_{\gamma_{g+1}}) \ar[d]_{\cong} \ar[rr]^{\phantom{aaaaaaa}C\Phi_0} & & \CF(L_\alpha, \underline{L}, L_{\gamma_{g+1}}) \ar[d]^{\cong} \\
	\CF(L_\alpha, L_\beta^{\omega_K}) \ar[rr]_{F_{W_\lambda}} & & \CF(L_\alpha, L_\gamma)}
\]
where the vertical maps are induced by geometric composition.
\label{Phi0Triangle}
\end{prop}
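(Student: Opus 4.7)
The strategy is to mimic the proof of Proposition \ref{Phi1Triangle} but exploit an additional rigidity coming from the fact that $V^T \cap L_{\gamma_{g+1}}$ consists of a single point $[\rho_0] = [-I, \pm I, \bfi, \bfj, -\bfk]$ in $\M_{1,3}$, as already noted in the excerpt. Because the corner of the quilted triangle defining $C\Phi_0$ labelled by the seam between $V^T$ and $L_{\gamma_{g+1}}$ must lie in this intersection, that corner is forced; consequently $C\Phi_0$ reduces to a quilted triangle map with a prescribed input in the $V^T \cap L_{\gamma_{g+1}}$-corner, making the second tensor factor in the domain contribute only a copy of $\Z$.

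The next step is to implement the neck-stretching argument shown in Figure \ref{fig:CPhi0-triangle}: lengthen the neck near the $V^T \cap L_{\gamma_{g+1}}$ corner so that, by the standard gluing theorem for quilted pseudoholomorphic polygons, the relative invariant of the original quilt equals, on the chain level, the composition of the relative invariants of the two resulting pieces (once the gluing parameter is large enough). The left piece is identified, via the computation at the end of Section \ref{sect:S3-S2xS1} (which is the same unit-calculation underlying Lemma \ref{lem:lefschetz-unit}), with the map $\rho_0 \mapsto \rho_0 \times \Theta_{\rho_0}$: a horizontal almost complex structure makes the unique index-zero pseudoholomorphic section the constant section at $\Theta_{\rho_0}$, and monotonicity rules out non-horizontal contributions.

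The right piece is then a quilted triangle whose upper-left vertex is forced to be $\rho_0 \times \Theta_{\rho_0}$. Passing through the geometric composition isomorphism $\CF(V^T, \underline{L}^T, \underline{L}, L_{\gamma_{g+1}}) \cong \CF(L_\beta^{\omega_K}, L_\gamma)$, the input $\rho_0 \times \Theta_{\rho_0}$ is sent to $\Theta_{\rho_0} \in \CF(L_\beta^{\omega_K}, L_\gamma)$, which is precisely the unit arising from the $\SI(S^3)$- and $\SI(S^2 \times S^1)$-factors of the associated Cerf decomposition. What remains of $C\Phi_0$ after these identifications is the classical quilted triangle map $\mu_2^{\alpha\beta\gamma}(\cdot, \Theta_{\rho_0})$, which is by definition the $2$-handle cobordism map $F_{W_\lambda}$ (cf. Section 4.4), yielding the commutative diagram in the statement.

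The main obstacle, as in the proof of Proposition \ref{Phi1Triangle}, lies in making the neck-stretching rigorous: one must check that no sphere bubbles or degenerate strip breakings appear in the limit which could alter the count, and that the gluing theorem applies to quilted pseudoholomorphic triangles with both classical and quilted seams at the long neck. Both points follow from the monotonicity hypotheses and transversality setup already in place from \cite{horton1}, together with the observation that the $\R$-graded count from Section \ref{subsect:triangle} preserves energy under gluing, so contributions from each side compose as expected. Once this gluing step is in place, the identification with $F_{W_\lambda}$ is essentially a tautology given the calculation of $\Theta_{\rho_0}$.
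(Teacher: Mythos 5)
Your argument reproduces the paper's proof essentially step for step: the forced corner at $V^T \cap L_{\gamma_{g+1}} = \{[\rho_0]\}$, the neck-stretching decomposition of the quilt as in Figure \ref{fig:CPhi0-triangle}, the identification of the first piece with $\rho_0 \mapsto \rho_0 \times \Theta_{\rho_0}$ from Section \ref{sect:S3-S2xS1}, and the final passage through geometric composition sending $\rho_0 \times \Theta_{\rho_0}$ to $\Theta_{\rho_0}$. The extra commentary about bubbling and strip breaking is harmless padding; the core route is the same as the paper's.
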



Recall that the map $h: \CF(L_\alpha, \underline{L}, V) \otimes \CF(V^T, L_{\gamma_{g+1}}) \longrightarrow \CF(L_\alpha, \underline{L}, L_{\delta_{g+1}})$ is defined by
\[
	h(x, y) = \tilde{\mu}_2(x, k(y)) + \tilde{\mu}_3(x,y,c),
\]
where $c \in \CF(L_{\gamma_{g+1}}, L_{\delta_{g+1}})$ is the Floer chain defined by the standard Lefschetz fibration over the disk with monodromy $\tau_{\beta_{g+1}}$, and $k: \CF(V^T, L_{\gamma_{g+1}}) \longrightarrow \CF(L_{\beta_{g+1}}, L_{\delta_{g+1}})$ is the chain nullhomotopy of $\mu_2(\cdot, c)$ described in Figure \ref{fig:nullhomotopy-k} above.

The first thing to notice is that the first term, $\tilde{\mu}_2(x, k(y))$ is identically zero in this context. This is because $k$ is a map of degree $1$, but the $\Z/2$-graded complexes $\CF(V^T, L_{\gamma_{g+1}}) \cong \Z \langle \rho_0 \rangle$ and $\CF(L_{\gamma_{g+1}}, L_{\delta_{g+1}}) \cong \Z \langle c \rangle$ are both supported only in degree $1 \text{ mod } 2$. Hence we simply have
\[
	h(x,y) = \tilde{\mu}_3(x,y,c).
\]
The fact that $\CF(V^T, L_{\gamma_{g+1}}) \cong \Z \langle \rho_0 \rangle$ also forces $h$ to depend only on $x \in \CF(L_\alpha, L_\beta^{\omega_K})$, so that
\[
	h(x) = \tilde{\mu}_3(x, \rho_0, c).
\]
At this point, showing that $h$ corresponds to a classical rectangle map follows by a combination of the proofs that $C\Phi_0$ and $C\Phi_1$ correspond to classical triangle maps, as suggested in Figure \ref{fig:h-rectangle}.

\begin{figure}[h]
	\centering
	\includegraphics[scale=.9]{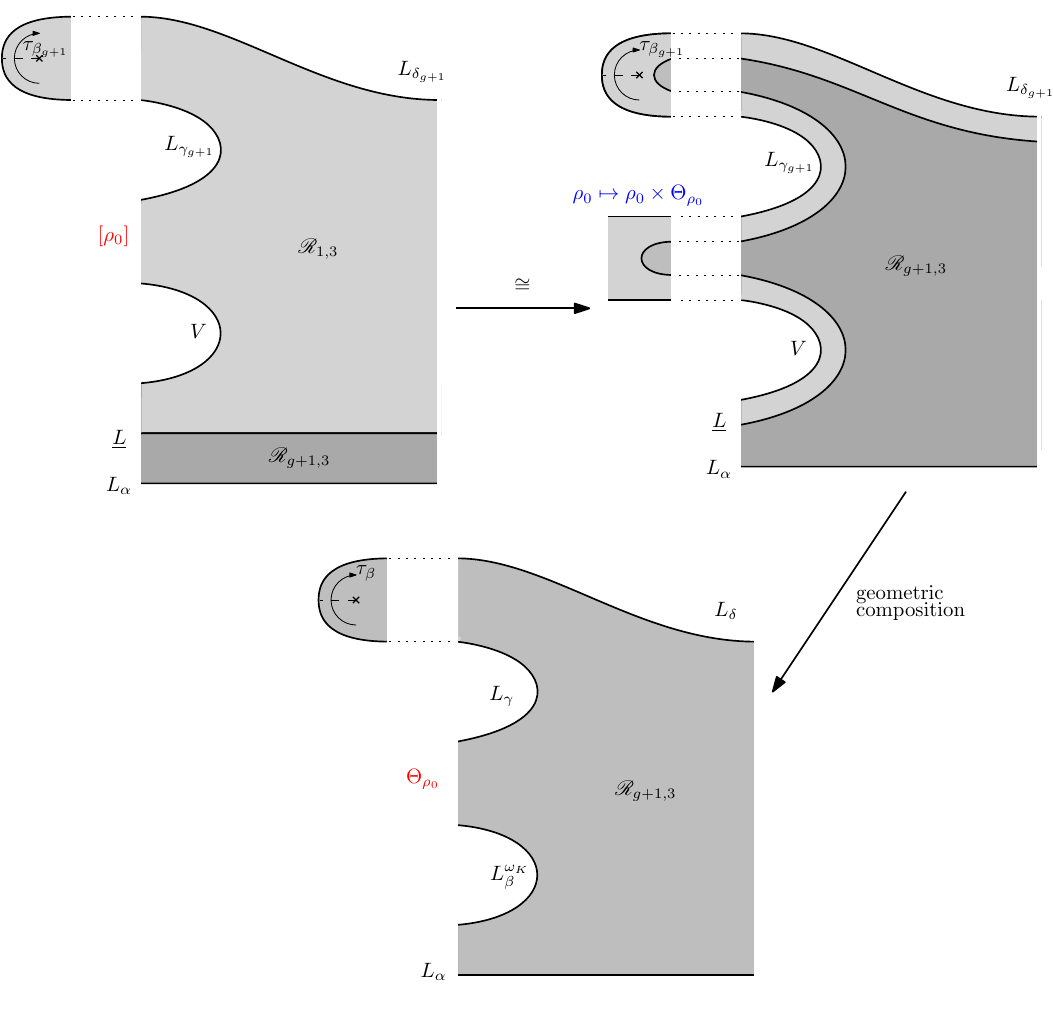}
	\caption{Identifying $h$ with a classical rectangle map.}
	\label{fig:h-rectangle}
\end{figure}

\begin{prop}
The map $h$ used in the proof of the surgery exact triangle may be identified with the rectangle map $\mu_3(\cdot,\Theta_{\beta\gamma}, \Theta_{\gamma\delta}): \CF(L_\alpha, L_\beta^{\omega_K}) \longrightarrow \CF(L_\alpha, L_\delta)$. In other words, there is a commutative diagram
\[
	\xymatrix{\CF(L_\alpha, \underline{L}, V) \ar[d]_{\cong} \ar[rr]^{h} & & \CF(L_\alpha, \underline{L}, L_{\delta_{g+1}}) \ar[d]^{\cong} \\
	\CF(L_\alpha, L_\beta^{\omega_K}) \ar[rr]_{\mu_3(\cdot,\Theta_{\beta\gamma}, \Theta_{\gamma\delta})} & & \CF(L_\alpha, L_\delta)}
\]
where the vertical maps are induced by geometric composition.
\label{hRectangle}
\end{prop}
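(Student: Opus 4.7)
The plan is to combine the two quilt-manipulation arguments from the proofs of Propositions \ref{Phi0Triangle} and \ref{Phi1Triangle}, applied simultaneously to the two ``absorbable'' inputs of the quilted rectangle $\tilde{\mu}_3(x,\rho_0,c)$. The starting point is the text's observation that the first term $\tilde{\mu}_2(x,k(y))$ in $h(x,y)$ vanishes by a $\Z/2$-grading parity count, and that since $V^T \cap L_{\gamma_{g+1}} = \{[\rho_0]\}$ forces $y=\rho_0$, we are reduced to identifying $\tilde{\mu}_3(x,\rho_0,c)$ with the classical rectangle count $\mu_3(\cdot,\Theta_{\beta\gamma},\Theta_{\gamma\delta})$.

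First I would perform a neck-stretch along the seam of the quilted rectangle separating the $c$-input from the rest, exactly as in the proof of Proposition \ref{Phi1Triangle} (Figure \ref{fig:CPhi1-triangle}). By Lemma \ref{lem:lefschetz-unit} applied to the resulting quilted Lefschetz-Bott fibration piece carrying $c$, the relative invariant of this piece agrees under geometric composition $\underline{L}\circ L_{\gamma_{g+1}} = L_\gamma$, $\underline{L}\circ L_{\delta_{g+1}} = L_\delta$ with the insertion of the unit $\Theta_{\gamma\delta}\in \CF(L_\gamma,L_\delta)$ at a corner of an ordinary rectangle in $\M_{g+1,3}$. Consequently the $c$-vertex of the quilted rectangle becomes a $\Theta_{\gamma\delta}$-vertex of an ordinary rectangle after this manipulation.

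Next I would neck-stretch along the seam separating the $\rho_0$-input from the rest, mimicking the proof of Proposition \ref{Phi0Triangle} (Figure \ref{fig:CPhi0-triangle}). The broken quilt decomposes as the composition of (i) the quilt map of Figure \ref{fig:S3-S2xS1} with the generator $\rho_0\in \CF(V^T, L_{\gamma_{g+1}})$, which by the general statement at the end of Section \ref{sect:S3-S2xS1} sends $\rho_0\mapsto \rho_0\times\Theta_{\rho_0}$, and (ii) a second quilt whose input is forced to be $\rho_0\times\Theta_{\rho_0}$. Under geometric composition along $\underline{L}$, the element $\rho_0\times\Theta_{\rho_0}$ is mapped to $\Theta_{\rho_0}\in \CF(L_\beta^{\omega_K}, L_\gamma)$, which is precisely the distinguished Floer chain $\Theta_{\beta\gamma}$ attached to the intersection point $[\rho_0]$ in the present Heegaard quadruple setup. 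Thus the $\rho_0$-vertex becomes a $\Theta_{\beta\gamma}$-vertex of an ordinary rectangle.

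Combining the two steps, the broken quilt for $\tilde{\mu}_3(x,\rho_0,c)$ is identified, via the geometric composition isomorphism $\CF(L_\alpha,\underline{L},V)\cong \CF(L_\alpha, L_\beta^{\omega_K})$ and $\CF(L_\alpha,\underline{L}, L_{\delta_{g+1}})\cong \CF(L_\alpha, L_\delta)$, with a classical pseudoholomorphic rectangle in $\M_{g+1,3}$ having vertices $x$, $\Theta_{\beta\gamma}$, $\Theta_{\gamma\delta}$, and output in $\CF(L_\alpha,L_\delta)$, which is exactly $\mu_3(x,\Theta_{\beta\gamma},\Theta_{\gamma\delta})$. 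The main technical obstacle is verifying that the two neck-stretches can be performed compatibly (large gluing lengths on both seams at once) so that the standard gluing/compactness theorems in quilted Floer theory actually produce the claimed chain-level equality; this is routine provided one orders the stretches and invokes the Gromov-type compactness already used in the proofs of Propositions \ref{Phi0Triangle} and \ref{Phi1Triangle}, together with Lemma \ref{lem:lefschetz-unit}, which was designed precisely to handle the Lefschetz-fibration piece arising from $c$.
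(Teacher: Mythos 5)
Your proposal is correct and follows the same route the paper takes: it reproduces the $\Z/2$-grading argument killing $\tilde{\mu}_2(x,k(y))$, the observation that $y$ is forced to be $\rho_0$, and then unpacks the paper's phrase ``a combination of the proofs that $C\Phi_0$ and $C\Phi_1$ correspond to classical triangle maps'' exactly as intended — applying the Lemma~\ref{lem:lefschetz-unit}/Proposition~\ref{Phi1Triangle} degeneration at the $c$-vertex and the Section~\ref{sect:S3-S2xS1}/Proposition~\ref{Phi0Triangle} degeneration at the $\rho_0$-vertex. The only thing the paper leaves implicit that you made explicit, the compatibility of the two simultaneous neck-stretches, is indeed the content of Figure~\ref{fig:h-rectangle}, so your account matches the paper's.
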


It remains to identify the connecting map $\SI(Y_{\lambda + \mu}(K)) \longrightarrow \SI(Y, \omega_K)$ in the surgery exact triangle as being induced by a triangle map. Since the construction of this map was indirect (by homological algebra methods), instead of trying explicitly determine this map, we will show that it can be \emph{replaced} by a triangle map while still preserving exactness of the surgery triangle. Nevertheless, we still conjecture that the map (before replacement) can be shown to be a triangle map.

\begin{figure}
	\centering
	\includegraphics[scale=.5]{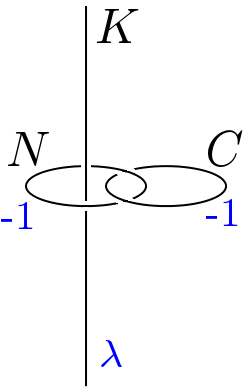}
	\caption{Framed knots inducing cobordisms in a surgery triad.}
	\label{fig:surgery-triad}
\end{figure}

The argument relies on basic Kirby calculus and the cyclic symmetry of surgery triads. Figure \ref{fig:surgery-triad} depicts the framed knots that induce the standard $2$-handle cobordisms between manifolds in the surgery triad $(Y, Y_\lambda(K), Y_{\lambda + \mu}(K))$, \emph{i.e.} attaching a $2$-handle to $Y$ along $K$ with framing $\lambda$ gives $Y_\lambda(K)$, attaching a $2$-handle to $Y_\lambda(K)$ along $N$ with framing $-1$ gives $Y_{\lambda + \mu}(K)$, and attaching a $2$-handle to $Y_{\lambda + \mu}$ along $C$ with framing $-1$ gives $Y$. Here we think of $N$ as the boundary of a normal disk to $K$ (using $\lambda$ to think of the normal bundle to $K$ as $K \times D^2$), given a framing of $-1$ relative to the normal disk. $C$ is a small meridian of $N$ which is also given a framing of $-1$ with respect to the normal disk bounded by $N$.

Since $(Y, Y_\lambda(K), Y_{\lambda + \mu}(K))$ is a surgery triad, so are $(Y_\lambda(K), Y_{\lambda + \mu}(K), Y)$ and $(Y_{\lambda + \mu}(K), Y,\allowbreak Y_\lambda(K))$. Hence we have two exact sequences
\[
	\SI(Y_\lambda(K), \omega_N \cup \omega_N \cup \omega_K) \xrightarrow{~\SI(W_\lambda, \Omega_1)~} \SI(Y_{\lambda + \mu}(K), \omega_N \cup \omega_K) \xrightarrow{~\SI(W_{\lambda + \mu}, \Omega_2)~} \SI(Y, \omega_N \cup \omega_K),
\]
\[
	\SI(Y_{\lambda+\mu}(K), \omega_C \cup \omega_C \cup \omega_K) \xrightarrow{~\SI(W_{\lambda+\mu}, \Omega_3)~} \SI(Y, \omega_C \cup \omega_K) \xrightarrow{~\SI(W, \Omega_4)~} \SI(Y_\mu(K), \omega_C \cup \omega_K),
\]
where we use the nontrivial bundle $\omega_N \cup \omega_K$ (resp. $\omega_C \cup \omega_K$) throughout in the first (resp. second) exact sequence. Note that $\omega_N \cup \omega_N \cup \omega_K = \omega_C = 0 \in H_1(Y_\lambda(K); \F_2)$, $\omega_N \cup \omega_K = \omega_C \cup \omega_C \cup \omega_K = 0 \in H_1(Y_{\lambda + \mu}(K);\F_2)$, and $\omega_N = \omega_C = 0 \in H_1(Y; \F_2)$, so that the two exact sequences above can be simplified to
\begin{align}
	\label{eqn:SES-surgery-N}
	\SI(Y_\lambda(K)) \xrightarrow{~\SI(W_\lambda)~} \SI(Y_{\lambda + \mu}(K)) \xrightarrow{~\SI(W_{\lambda + \mu}, \Omega_K^\prime)~} \SI(Y, \omega_K)
\end{align}
\begin{align}
	\label{eqn:SES-surgery-C}
	\SI(Y_{\lambda+\mu}(K)) \xrightarrow{~\SI(W_{\lambda+\mu}, \Omega_K^\prime)~} \SI(Y, \omega_K) \xrightarrow{~\SI(W, \Omega_K)~} \SI(Y_\mu(K))
\end{align}
Exactness of sequences (\ref{eqn:SES-surgery-N}) and (\ref{eqn:SES-surgery-C}) imply that
\[
	\ker(\SI(W_{\lambda + \mu}, \Omega_K^\prime)) = \im(\SI(W_\lambda)),
\]
\[
	\im(\SI(W_{\lambda + \mu}, \Omega_K^\prime)) = \ker(\SI(W, \Omega_K)),
\]
and therefore the triangle
\[
	\xymatrix{\SI(Y, \omega_K) \ar[rr]^{\SI(W, \Omega_K)} & & \SI(Y_\mu(K)) \ar[dl]^{\phantom{p}\SI(W_\lambda)} \\
	 & \SI(Y_{\lambda + \mu}(K)) \ar[ul]^{\SI(W_{\lambda+\mu}, \Omega_K^\prime)\phantom{pp}}}
\]
is exact.
\section{A Spectral Sequence for Link Surgeries}

\label{sect:linksurgeries}

We now show how to generalize the exact triangle for Dehn surgery on a knot to a spectral sequence for Dehn surgeries on a link. Our approach is heavily inspired by proofs of similar spectral sequences for Heegaard Floer homology \cite{oz-sz-branched}, singular instanton homology \cite{Kh-detector}, monopole Floer homology \cite{bloom-surgery}, and framed instanton homology \cite{scaduto}.

\subsection{The Link Surgeries Complex}

Let $Y$ be a closed, oriented $3$-manifold and $\bbL = (L_1 \cup \cdots \cup L_m, \lambda)$ be an oriented link with an enumeration of its components. An element $v = (v_1, \dots, v_m) \in \{0,1,\infty\}^m$ will be called a {\bf multi-framing} of $\bbL$. $Y_v$ will denote the $3$-manifold obtained from $Y$ by performing $v_k$-surgery on $L_k$ for eack $k = 1, \dots, m$ (with respect to the framing $\lambda$).

Recall that for each multi-framing $v$ of $\bbL$, we can construct a genus $g + m$ Heegaard triple $\calH_v = (\Sigma_{g+m}, \bfalpha, \bfbeta, \bfeta(v), z)$ satisfying the following conditions:
\begin{itemize}
	\item Attaching handles to $\{\alpha_1, \dots, \alpha_{g+m}\}$ and $\{\beta_1, \dots, \beta_g\}$ gives the complement of $L$ in $Y$.
	\item $\beta_{g+k}$ is a meridian for $L_k$.
	\item $\eta(v)_{g+k}$ represents the $v_k$-framing of $L_k$ with respect to the given framing $\lambda$.
	\item $\eta(v)_k = \beta_k$ for all $k = 1, \dots, g$.
	\item $(\Sigma_{g+m}, \bfalpha, \bfeta(v), z)$ is a Heegaard diagram for $Y_v$.
\end{itemize}
The same choice of curves $\bfalpha$ and $\bfbeta$ can be made for all $v \in \{0,1,\infty\}^m$, so that the various triple diagrams $\calH_v$ differ only in the framing curves $\bfeta(v)$. For any two multi-framings $v, w$, the Heegaard diagram $\{\Sigma_{g+m}, \bfeta(v), \bfeta(w), z\}$ represents a connected sum of copies of $S^2 \times S^1$ (the number $n_{vw}$ of which depends on how many components $v$ and $w$ differ in), so that we have as usual a distinguished element of maximal degree $\Theta_{vw} \in \HF(L_{\eta(v)}, L_{\eta(w)}) \cong H^{3-\ast}(S^3)^{\otimes n_{vw}}$ which acts as a unit for the triangle product.

For notational convenience, we will conflate the indices $\infty$ and $-1$ and henceforth consider multi-framings as elements of $\{-1,0,1\}^m$, with the understanding that to obtain $Y_v$, we perform $\infty$-surgery on $L_k$ if $v_k = -1$. With this change in notation, it now makes sense to define the {\bf weight} of $v$ by the formula
\[
	|v| = \sum_{k = 1}^m |v_k|.
\]
In other words, $|v|$ is the number of entries of $v$ not equal to $0$. Note that $\{-1,0,1\}^m$ is a lattice with the natural ordering given by
\[
	v \leq w \iff v_k \leq w_k \text{ for all } k = 1, \dots, m.
\]
When $|w - v| = 1$ and $v \leq w$, we say that $w$ is an {\bf immediate successor} of $v$.

If $w$ is an immediate successor of $v$, then $v$ and $w$ differ in just one of the components' framings, and $Y_w$ is obtained from $Y_v$ by attaching a single $2$-handle. In this case, write $\partial_{vw}: \CSI(Y_v) \longrightarrow \CSI(Y_w)$ for the map induced by this $2$-handle attachment. More generally, when $|w - v| = k + 1$ and $v \leq w$, given a sequence $v < u^1 < \cdots < u^k < w$ of immediate successors, define
\[
	\partial_{v<u^1<\cdots<u^k<w}(\xi) = \mu_{k+2}^{\alpha\eta(v)\eta(u^1)\cdots\eta(u^k)\eta(w)}(\xi \otimes \Theta_{\eta(v)\eta(u^1)} \otimes \cdots \otimes \Theta_{\eta(u^k)\eta(w)}),
\]
a count of pseudoholomorphic $(k+3)$-gons. To compactify the notation, write
\[
	\partial_{vw}: \CSI(Y_v) \longrightarrow \CSI(Y_w),
\]
\[
	\partial_{vw} = \sum_{v < u^1 < \cdots < u^k < w} \partial_{v < u^1 < \cdots < u^k < w},
\]
where the sum is over all ``paths'' in $\{-1, 0, 1\}^m$ from $v$ to $w$ through sequences of immediate successors. For all $v$, take $\partial_{vv}$ to just be the usual Floer differential on $\CSI(Y_v)$, and if $v > w$, define $\partial_{vw} \equiv 0$.

We will shortly need the following generalization of the K\"unneth principle to all polygons:

\begin{thm}
\label{thm:kunneth-polygon}
\textup{(K\"unneth Principle for Polygons)} If $\boldsymbol{\gamma}$ is a set of attaching curves respecting the connected sum decomposition $\Sigma_{g+m} = \Sigma_g \# \Sigma_n$, then the polygon map
\[
	\mu_{k+2}^{\gamma\eta(v)\eta(u^1)\cdots\eta(u^k)\eta(w)}(\xi \otimes \Theta_{\eta(v)\eta(u^1)} \otimes \cdots \otimes \Theta_{\eta(u^k)\eta(w)}): \CF(L_\gamma, L_{\eta(v)}) \longrightarrow \CF(L_\gamma, L_{\eta(w)})
\]
corrresponds exactly to a map of the form
\[
	\id \otimes \mu_{k+2}^{\gamma\eta(v)\eta(u^1)\cdots\eta(u^k)\eta(w)}(\xi' \otimes \Theta_{\eta(v)\eta(u^1)}^\prime \otimes \cdots \otimes \Theta_{\eta(u^k)\eta(w)}^\prime): \CF(L_{\gamma;g}^\prime, L_{\beta;g}^\prime) \otimes \CF(L_{\gamma;m}^\prime, L_{\eta(v)}^\prime)
\]
\[
	\longrightarrow \CF(L_{\gamma;g}^{\prime}, L_{\beta;g}^\prime) \otimes \CF(L_{\gamma;m}^\prime, L_{\eta(w)}^\prime).
\]
\end{thm}

The various Lagrangians appearing in the equivalent map are defined similarly to the case of triangles, based on the order in which we decide to attach handles. The proof is accomplished in exactly the same way as for triangles.

Now, define a chain complex
\[
	\widetilde{\bfC}(Y,L) = \bigoplus_{v \in \{-1,0,1\}^m} \CSI(Y_v), \quad\quad \bfD = \sum_{v \leq w} \partial_{vw}.
\]

\begin{prop}
$\bfD^2 \equiv 0$, so that $(\widetilde{\bfC}(Y,L), \bfD)$ is indeed a chain complex.
\end{prop}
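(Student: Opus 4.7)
The strategy follows the proofs of the analogous link surgery spectral sequences in Heegaard Floer \cite{oz-sz-branched}, monopole Floer \cite{bloom-surgery}, and framed instanton homology \cite{scaduto}: expand $\bfD^2$ and apply the $A_\infty$-relations for quilted polygon maps, reducing the resulting cancellations to standard computations involving the top-graded unit chains $\Theta_{vw}$.

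Concretely, for $v \leq w$ with $n = |w-v|$, the $(v,w)$-component of $\bfD^2$ expands as a sum over immediate-successor paths $v = v^0 < v^1 < \cdots < v^n = w$ and splittings $0 \leq i \leq n$:
$$(\bfD^2)_{vw}(\xi) \;=\; \sum_{\text{path}, \, i} \mu_{n-i+1}\bigl(\mu_{i+1}(\xi, \Theta_{v^0v^1}, \ldots, \Theta_{v^{i-1}v^i}), \, \Theta_{v^iv^{i+1}}, \ldots, \Theta_{v^{n-1}v^n}\bigr).$$
For each fixed path, the quilted $A_\infty$-relation applied to $(\xi, \Theta_{v^0v^1}, \ldots, \Theta_{v^{n-1}v^n})$ with Lagrangians $(L_\alpha, L_{\eta(v^0)}, \ldots, L_{\eta(v^n)})$ identifies the inner sum over splittings with (minus) the sum of the ``Case 2'' terms, in which the inner polygon map acts on a consecutive string of $\Theta$'s not involving $\xi$. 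Thus the task reduces to showing that the total Case 2 contribution, summed over all paths, vanishes.

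I will partition Case 2 contributions by the arity $q$ of the inner polygon map. For $q = 1$, the term vanishes because each $\Theta_{v^kv^{k+1}}$ is the top-graded generator of $\HF(L_{\eta(v^k)}, L_{\eta(v^{k+1})}) \cong H^{3-*}(S^3)^{\otimes n_{v^kv^{k+1}}}$ (via the discussion in Section \ref{sect:S3-S2xS1}) and is therefore automatically a cycle. For $q = 2$, the unit identity $\mu_2(\Theta_{v^kv^{k+1}}, \Theta_{v^{k+1}v^{k+2}}) = \Theta_{v^kv^{k+2}}$ produces a term indexed by a ``reduced'' path $v^0 < \cdots < v^k < v^{k+2} < \cdots < v^n$ skipping vertex $v^{k+1}$; since $v^{k+2} - v^k$ has exactly two nonzero coordinates, precisely two choices of intermediate $v^{k+1}$ yield the same reduced path, and these contributions cancel in pairs. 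For $q \geq 3$, I claim $\mu_q(\Theta_{v^0v^1}, \ldots, \Theta_{v^{q-1}v^q}) = 0$ by the standard vanishing of higher $A_\infty$-products on unit chains.

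The principal obstacle is the verification of this $q \geq 3$ vanishing. I expect to handle it by reducing the relevant polygon count to the Fukaya algebra on $(S^3)^g$ via the explicit holonomy descriptions of the Lagrangians $L_{\eta(v)}$ from \cite[Section 4]{horton1}, where the statement follows from the fact that higher $A_\infty$-products on units in a strictly commutative algebra vanish. Working over $\mathbb{Z}$ will additionally require matching the $A_\infty$ sign conventions with the canonical orientations of the quilted polygon moduli spaces in the $q = 2$ pairwise cancellation; this is a routine but tedious sign analysis analogous to that carried out in \cite{scaduto}. Over $\F_2$ the argument is immediate once the three vanishing statements above are in hand.
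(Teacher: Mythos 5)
Your overall architecture matches the paper's: expand $(\bfD^2)_{vw}$ via $A_\infty$-relations, peel off the terms in which the inner polygon map involves $\xi$, and reduce to showing
\[
\sum_{v < u^1 < \cdots < u^k < w} \mu_{k+1}\bigl(\Theta_{\eta(v)\eta(u^1)}\otimes\cdots\otimes\Theta_{\eta(u^k)\eta(w)}\bigr) = 0.
\]
Your $q=1$ case (paper's $k=0$) is correct and identical. But there are two genuine gaps in the remaining cases.

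First, your $q=2$ analysis assumes $v^{k+2}-v^k$ always has exactly two nonzero coordinates, so that the two intermediate paths pair up and cancel. This misses an entire subcase: a single coordinate can move $-1\to 0\to 1$, in which case $v^{k+2}-v^k$ has \emph{one} nonzero coordinate and there is only \emph{one} path of immediate successors between them, so no pairing is available. Moreover, in that subcase the identity $\mu_2(\Theta_{v^kv^{k+1}},\Theta_{v^{k+1}v^{k+2}}) = \Theta_{v^kv^{k+2}}$ is false: the paper shows this product \emph{vanishes}, because the corresponding Heegaard triple has a torus connect summand realizing a blowup cobordism, whose induced map is zero. Your unit identity only holds when the two changed coordinates are distinct, and your argument has no mechanism for the blowup case.

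Second, your $q\geq 3$ step asserts a "standard vanishing of higher $A_\infty$-products on unit chains," reduced to the statement that higher products on units in a strictly commutative algebra vanish. This does not survive scrutiny: the $\Theta$'s are units for the triangle product, but the Floer $A_\infty$ structure is not strictly unital at chain level, and the usual argument requires a strict unit. The paper instead carries out a genuine geometric induction: for $k\geq 2$ it studies the forgetful map to the moduli of conformal structures on the $(k+3)$-gon, exploits the non-convex corners to show the relevant moduli space of polygons is at least $k$-dimensional, and uses Gromov compactness and gluing at the "deep slit" degenerations to reduce the fiber count modulo $2$ to the inductive hypothesis, combined with a K\"unneth argument isolating the distinguished torus summand. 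That analysis is not "routine" and is the main technical content of the proof; replacing it with an appeal to strict unitality in the Fukaya category of $(S^3)^g$ would need a separate, nontrivial justification that you have not provided.
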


\begin{proof}
The idea is to show that for any $\xi \in \widetilde{\bfC}(Y,L)$, the quantity $\bfD^2 \xi$ is precisely a count of degenerations of pseudoholomorphic polygons which coincides with an $A_\infty$ associativity relation that is known by general theory to be zero. The relevant $A_\infty$ relation here is the $A_k$ relation,
\[
	\sum_{0 \leq i < j \leq k} \mu^{\alpha \eta^1 \cdots \eta^i \eta^j \cdots \eta^k}_{k-j+i}( \xi \otimes \Theta_{\alpha \eta^1} \otimes \cdots \otimes \mu^{\eta^i \cdots \eta^j}_{j-i}(\Theta_{\eta^i \eta^{i+1}} \otimes \cdots \otimes \Theta_{\eta^{j-1}\eta^j}) \otimes \cdots \otimes \Theta_{\eta^{k-1} \eta^k}) = 0,
\]
for $0 \leq k \leq m$. A simple computation shows that for $v, w \in \{0,1,\infty\}^m$ with $|v - w| = k + 1$ and $v < w$, the $vw$-component of $\bfD^2 \xi$ is
\[
	(\bfD^2 \xi)_{vw} = \sum_{v < u^1 < \cdots < u^{k} < w} \sum_{0 < j \leq k} \mu^{\alpha \eta^j \cdots \eta^k}_{k-j}(\mu^{\alpha \eta^0 \cdots \eta^j}_{j}(\xi \otimes \cdots \otimes \Theta_{\eta^{j-1}\eta^j}) \otimes \cdots \otimes \Theta_{\eta^{k} \eta^{k+1}}),
\]
which is a sum over the $i = 0$ parts of various $A_{k+2}$ relations (here $\bfeta^0 = \bfeta(v)$, $\bfeta^{k + 1} = \bfeta(w)$, and for $1 \leq \ell \leq k$, $\bfeta^\ell = \bfeta(u^\ell)$). Hence if we can show the sum of the remaining terms of the appropriate $A_{k + 2}$ relations with $i \neq 0$ are all zero, we will have established that $\bfD^2 \equiv 0$.

We will in fact show that for any $v,w \in \{-1,0,1\}^m$ with $|w - v| = k + 1$ and $v < w$, we have
\[
	\sum_{v < u^1 < \cdots < u^k < w} \mu^{\eta(v) \eta(u^1) \cdots \eta(u^k)\eta(w)}_{k+1}(\Theta_{\eta(v) \eta(u^1)} \otimes \cdots \otimes \Theta_{\eta(u^k)\eta(w)}) = 0,
\]
where the sum is over all paths from $v$ to $w$ through sequences of immediate successors. This will clearly imply the desired result. We consider several cases:

\underline{$k = 0$:} In this case we want to show that
\[
	\mu_1^{\eta(v)\eta(w)}(\Theta_{\eta(v)\eta(w)}) = 0.
\]
But this is true because $\Theta_{\eta(v)\eta(w)} \in \CSI(L_{\eta(v)}, L_{\eta(w)})$ is a cycle and $\mu_1$ is the Floer differential.

\underline{$k = 1$:} There are two subcases here: either $v$ and $w$ differ in a single framing, or $v$ and $w$ differ in exactly two framings. In the first case, it is easy to see that the associated Heegaard triple diagram $(\Sigma_{g+m}, \bfeta(v), \bfeta(u^1), \bfeta(w), z)$ has a torus connect summand that corresponds to the cobordism map induced by a blowup. By \cite[Theorem 8.20]{horton1}, this map is zero, and hence $\mu_2^{\eta(v)\eta(u^1)\eta(w)}(\Theta_{\eta(v)\eta(u^1)} \otimes \Theta_{\eta(u^1)\eta(w)}) = 0$.

In the case where $v$ and $w$ differ in exactly two framings, there are two possible paths $v < u < w$ and $v < u' < w$ from $v$ to $w$ through immediate successors. Both of the associated Heegaard triple diagrams contain a genus $2$ connect summand which corresponds to the nearest point isomorphism $\CSI(S^2 \times S^1) \otimes \CSI(S^3) \cong \CSI(S^3)$, and the rest of Heegaard diagram corresponds to multiplying the unit of $\SI(\#^{g+m-2} S^2 \times S^1)$ with itself, so that
\[
	\mu_2^{\eta(v)\eta(u)\eta(w)}(\Theta_{\eta(v)\eta(u)} \otimes \Theta_{\eta(u)\eta(w)}) = \pm \Theta_{\eta(v)\eta(w)} = \mu_2^{\eta(v)\eta(u')\eta(w)}(\Theta_{\eta(v)\eta(u')} \otimes \Theta_{\eta(u')\eta(w)}).
\]
When summing over paths of immediate successors, these two summands therefore cancel each other out modulo $2$. In fact, they even cancel out with integer coefficients, but we will not use this fact in this article.

\underline{$k \geq 2$:} For the rest of cases we proceed by induction. Note that for any path $v < u^1 < \cdots < u^k < w$ through immediate successors, there will always be a torus connect summand of $\Sigma_{g + m}$ where $\eta(v) = \eta(u^1) = \cdots = \eta(u^k)$ and $\eta(w)$ meets these curves transversely in a single point. We will show that the polygon count from this particular summand is zero, which will force the overall polygon count to be zero by the K\"unneth principle.

When $k = 2$, let $v < u^1 < u^2 < w$ be a path from $v$ to $w$ through immediate successors. By adding an auxiliary Lagrangian $L_{\eta(u^{-1})}$ which is a small Hamiltonian isotope of $L_{\eta(v)}$, we see that
\[
	\mu_3(\Theta_{\eta(v)\eta(u^1)} \otimes \Theta_{\eta(u^1)\eta(u^2)} \otimes \Theta_{\eta(u^2)\eta(w)}) = \mu_3(\mu_2(\Theta_{\eta(u^{-1})\eta(v)} \otimes \Theta_{\eta(v)\eta(u^1)}) \otimes \Theta_{\eta(u^1)\eta(u^2)} \otimes \Theta_{\eta(u^2)\eta(w)}).
\]
On the other hand, the $A_\infty$ relations tell us that
\[
	\mu_3(\mu_2(\Theta_{\eta(u^{-1})\eta(v)} \otimes \Theta_{\eta(v)\eta(u^1)}) \otimes \Theta_{\eta(u^1)\eta(u^2)} \otimes \Theta_{\eta(u^2)\eta(w)})\phantom{lololololololololololololol}
\]
\[
	\phantom{lolololololololol} = \mu_3(\Theta_{\eta(u^{-1})\eta(v)} \otimes \mu_2(\Theta_{\eta(v)\eta(u^1)} \otimes \Theta_{\eta(u^1)\eta(u^2)}) \otimes \Theta_{\eta(u^2)\eta(w)})
\]
\[
	 \phantom{lololololololololololol =} + \mu_3(\Theta_{\eta(u^{-1})\eta(v)} \otimes \Theta_{\eta(v)\eta(u^1)} \otimes \mu_2(\Theta_{\eta(u^1)\eta(u^2)} \otimes \Theta_{\eta(u^2)\eta(w)})).
\]
It is clear that
\[
	\mu_2(\Theta_{\eta(v)\eta(u^1)} \otimes \Theta_{\eta(u^1)\eta(u^2)}) = \Theta_{\eta(v)\eta(u^2)}, \quad\quad \mu_2(\Theta_{\eta(u^1)\eta(u^2)} \otimes \Theta_{\eta(u^2)\eta(w)}) = \Theta_{\eta(u^1)\eta(w)},
\]
and therefore the two terms on the right-hand side above are equal since $L_{\eta(u^j)}$ is a small Hamiltonian isotope of $L_{\eta(u^\ell)}$ for any $-1, 0, \dots, k$. Hence with $\F_2$ coefficients, $\mu_3(\Theta_{\eta(v)\eta(u^1)} \otimes \Theta_{\eta(u^1)\eta(u^2)} \otimes \Theta_{\eta(u^2)\eta(w)}) = 0$. (As usual, one can establish this result with $\Z$ coefficients as well if appropriate signs are added)

For the inductive step, suppose that we have established that
\[
	\mu_k(\Theta_{\eta(v)\eta(u^1)} \otimes \cdots \otimes \Theta_{\eta(u^k)\eta(w)}) = 0.
\]
We want to show that
\[
	\mu_{k+1}(\Theta_{\eta(u^{-1})\eta(v)} \otimes \Theta_{\eta(v)\eta(u^1)} \otimes \cdots \otimes \Theta_{\eta(u^k)\eta(w)}) = 0,
\]
where $L_{\eta(u^{-1})}$ is a small Hamiltonian isotope of $L_{\eta(v)}$. Let $\mathcal{M}(k+3)$ denote the moduli space of conformal structures on the $(k+3)$-gon; this space is homeomorphic to $\R^k$. For any homotopy class of Whitney $(k+3)$-gons
\[
	\phi \in \pi_2(\Theta_{u^{-1}v}, \Theta_{vu^1}, \dots, \Theta_{u^k w},\Theta_{wv}),
\]
there is a forgetful map
\[
	G: \mathcal{M}(\phi) \longrightarrow \mathcal{M}(k+3)
\]
which keeps track of the conformal class of the domain. By Gromov compactness, this map is proper.

Note that the Lagrangian boundary conditions for the $(k+3)$-gon we are considering involve exactly $k$ non-convex corners, each of which contributes a $1$-parameter family of deformations of any pseudoholomorphic polygon by varying the length of the ``slit'' at the corner, or equivalently varying the conformal structure on the domain. It follows that $\mathcal{M}(\phi)$ at dimension at least $k$, since any pseudoholomorphic representative $u$ of $\phi$ automatically lies in a $k$-parameter family of representatives.

\begin{figure}
	\centering
	\includegraphics[scale=.5]{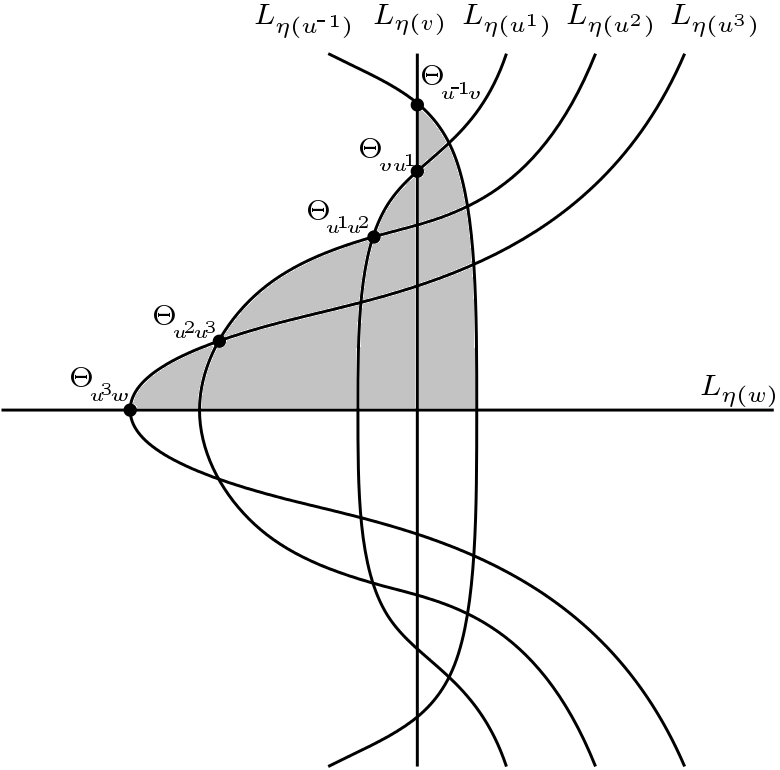}
	\caption{Schematic of polygons counted by $\mu_{k+1}(\Theta_{\eta(u^{-1})\eta(v)} \otimes \Theta_{\eta(v)\eta(u^1)} \otimes \cdots \otimes \Theta_{\eta(u^k)\eta(w)})$, $k = 3$.}
\end{figure}

\begin{figure}
	\centering
	\includegraphics[scale=.5]{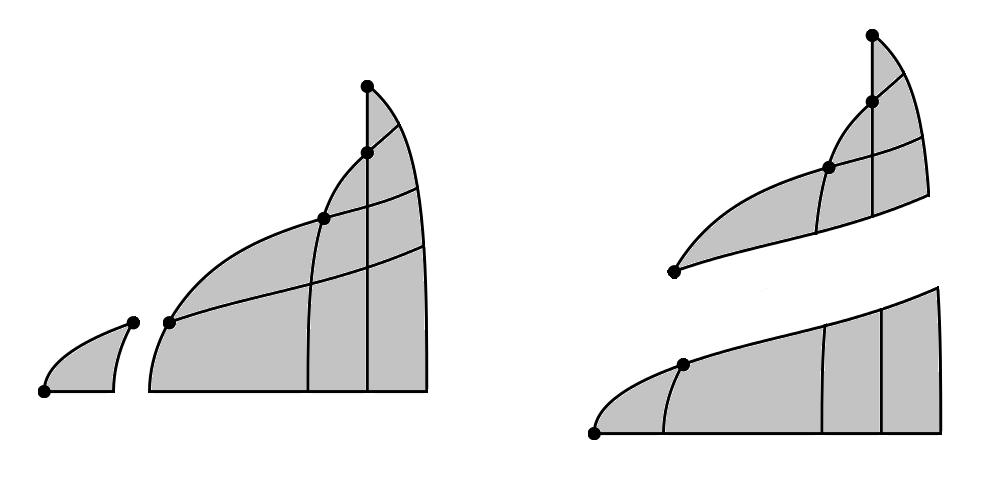}
	\caption{Schematic of the limiting pairs of polygons obtained by the two possible slits at a non-convex corner.}
	\label{fig:d2-degeneration}
\end{figure}

Suppose $\dim \mathcal{M}(\phi)$ is exactly $k$. Then for any conformal structure $c \in \mathcal{M}(k+3)$, $G^{-1}(c)$ is a finite set of points, since $G$ is proper. Near the ends of the moduli space $\mathcal{M}(\phi)$, the standard gluing theory for pseudoholomorphic polygons implies that the count $\# G^{-1}(c)$ will correspond to a count of degenerate pseudoholomorphic polygons where the appropriate ``slits'' in the image have become deep enough to meet another Lagrangian. In particular, by making a deep slit at the appropriate corner, we degenerate into a $(k+2)$-gon of the type we have already considered (in the inductive hypothesis) and a triangle (see Figure \ref{fig:d2-degeneration} for a schematic). The inductive hypothesis therefore implies that $\#G^{-1}(c)$ is even for $c$ near the appropriate end of $\mathcal{M}(k+3)$, and hence must be even for \emph{any} $c \in \mathcal{M}(k+3)$.

Now note that the only way to have a rigid pseudoholomorphic polygon coming from the entire Heegaard $(k+3)$-diagram under consideration is to have the other connect summands determine the conformal structure on the special torus connect summand investigated above, and there will only be a discrete set of pseudoholomorphic polygons contributed by this connect summand for a fixed conformal structure if the dimension of the relevant moduli space is exactly $k$. Since we just determined that the count of such polygons is zero modulo $2$, the K\"unneth principle then implies that the overall count is zero modulo $2$, so that
\[
	\mu_{k+1}(\Theta_{\eta(u^{-1})\eta(v)} \otimes \Theta_{\eta(v)\eta(u^1)} \otimes \cdots \otimes \Theta_{\eta(u^k)\eta(w)}) = 0.
\]
The desired result hence follows by induction.
\end{proof}

Write
\[
	\bfC(Y,L) = \bigoplus_{v \in \{0,1\}^m} \CSI(Y_v) \subset \widetilde{\bfC}(Y,L).
\]
We call the subcomplex $(\bfC(Y,L), \bfD)$ of $(\widetilde{\bfC}(Y,L), \bfD)$ the {\bf link surgeries complex} of $(Y,L)$.

We may define a filtration on $\bfC(Y,L)$ by
\[
	F^i \bfC(Y,L) = \bigoplus_{|v| \geq i} \CSI(Y_v).
\]
Since $\bfD$ is a sum of $\partial_{vw}$'s with $v \leq w$, it is clear that $\bfD$ preserves the filtration: $\bfD F^i \bfC(Y,L) \subseteq F^i \bfC(Y,L)$. Hence $(\bfC(Y, L), \bfD)$ is a filtered chain complex and we obtain an associated spectral sequence $E^r_{p,q}(Y,L)$ with
\[
	E^1 = \bigoplus_{v \in \{0,1\}^m} \SI(Y_v), \quad\quad d^1 = \sum_{\substack{v \leq w \\ |w - v| = 1}} \partial_{vw}.
\]
We will call the spectral sequence $E^r_{p,q}(Y,L)$ the {\bf link surgeries spectral sequence} for $(Y,L)$.

\subsection{Convergence of the Spectral Sequence}

Our goal now is to identify the $E^\infty$ page of the link surgeries spectral sequence $E^r_{p,q}(Y,L)$ with the symplectic instanton homology $\SI(Y)$. We achieve this by the technique of ``dropping a component,'' which we explain in detail below.

For $i = -1, 0, 1$, define $(\bfC_i, \bfD_i)$ to be the link surgeries complex for $(Y_i(L_1), L \setminus L_1)$, where $Y_i(L_1)$ denotes the result of $i$-surgery on $L_1 \subset Y$ and we recall that if $i = -1$, this means $\infty$-surgery. Note that for $v', w' \in \{-1,0,1\}^{m-1}$, we have that $(\bfD_i)_{v'w'} = \partial_{vw}$, where $v = (i, v')$ and $w = (i, w')$. Therefore we may consider $(\bfC_0, \bfD_0)$ and $(\bfC_1, \bfD_1)$ as subcomplexes of $(\bfC(Y,L), \bfD)$. $(\bfC_{-1}, \bfD_{-1})$ is the complex corresponding to ``dropping a component,'' since no surgery is performed on $L_1$ in this complex.

Define a map
\[
	\mathbf{f}_0: \bfC_0 \longrightarrow \bfC_1,
\]
\[
	\mathbf{f}_0 = \sum_{\substack{v, w \in \{0,1\}^m \\ v_1 = 0, w_1 = 1}} \partial_{vw}.
\]
$\mathbf{f}_0$ is simply the sum of components of $\bfD$ involving surgery on the component $L_1$. $\mathbf{f}_0$ is a (anti-)chain map for the same reason $\bfD^2 = 0$, and by construction it is clear that $(\bfC(Y,L), \bfD) = \Cone(\mathbf{f}_0)$, \emph{i.e.}
\[
	\bfC(Y,L) = \bfC_0 \oplus \bfC_1, \quad\quad \bfD = \begin{pmatrix} \bfD_0 & 0 \\ \mathbf{f}_0 & \bfD_1 \end{pmatrix}.
\]

Now consider the map
\[
	\mathbf{F}: \bfC_{-1} \longrightarrow \bfC(Y,L),
\]
\[
	\mathbf{F} = \sum_{\substack{v_1 = -1 \\ w_1 \in \{0,1\}}} \partial_{vw}.
\]
$\mathbf{F}$ is a chain map (anti-chain map if we work with $\Z$-coefficients), by a variant of the proof that $\bfD^2 = 0$:
\[
	\mathbf{F} \bfD_{-1} + \bfD \mathbf{F} = \sum_{\substack{v_1 = u_1 = -1 \\ w_1 \in \{0,1\}}} \partial_{uw} \partial_{vu} + \sum_{\substack{v_1 = -1 \\ u_1, w_1 \in \{0,1\}}} \partial_{uw} \partial_{vu} = 0.
\]
Consider the filtrations on $(\bfC, \bfD)$ and $(\bfC_{-1}, \bfD_{-1})$ given by the sum of all surgery coefficients except the one for $L_1$, $\sum_{i = 2}^m |v_i|$. $\mathbf{F}$ respects these filtrations, and in grading $p$ on the $E^0$-page of the induced spectral sequence, $\mathbf{F}$ restricts to a map of the form
\[
	\mathbf{F}_p^0: \bigoplus_{\substack{v_1 = -1 \\ \sum_{i = 2}^m |v_i| = p}} \CSI(Y_v) \longrightarrow \bigoplus_{\substack{v_1 \in \{0,1\} \\ \sum_{i = 2}^m |v_i| = p}} \CSI(Y_v).
\]
The restriction of $\mathbf{F}_p^0$ to a single direct summand has the form
\[
	\mathbf{F}_p^0|_{\CSI(Y_v)} = \partial_{vv^0} \oplus \partial_{vv^1}
\]
where $v^0$ and $v^1$ differ from $v$ only in the first component: $v^0_1 = 0$ and $v^1_1 = 1$ ($v_1 = -1$ by the definition of $\bfC_{-1}$). But in homology $\partial_{vv^0}$ is the same as the map $\Phi_0$ in the surgery exact triangle (by Proposition \ref{Phi0Triangle}), and $\partial_{vv^1}$ is the same as the map $h$ which serves as a chain nullhomotopy of $C\Phi_1 \circ C\Phi_0$ (by Proposition \ref{hRectangle}). It therefore follows from the Double Mapping Cone Lemma (see \cite[Lemma 5.4]{symp-gysin} or \cite[Lemma 5.9]{fiberedtriangle}) that $\mathbf{F}^0$ 
is a quasi-isomorphism, and hence so is
\[
	\mathbf{F}: \bfC_{-1} \longrightarrow \bfC(Y,L).
\]
In other words, the complex $\bfC_{-1}$ obtained from $\bfC(Y,L)$ obtained by ``dropping a component'' of $L$ (\emph{i.e.} not performing surgery on the component $L_1$) is in fact quasi-isomorphic to $\bfC(Y,L)$.

The above argument readily generalizes to dropping an arbitrary number of components of $L$. By dropping all $m$ components of $L$, we see that $\bfC(Y,L)$ is quasi-isomorphic to $\CSI(Y, \omega_L)$, and we conclude the following:

\begin{thm}
The link surgeries spectral sequence $E^r(Y, L)$ for an $m$-component link $L$ in a closed, oriented $3$-manifold $Y$ converges by the $m^\text{th}$ page to the symplectic instanton homology $\SI(Y, \omega_L)$.
\end{thm}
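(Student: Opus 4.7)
The plan is to prove the theorem in two steps: first obtain convergence from the bounded filtration on $\bfC(Y,L)$, and then identify the $E^\infty$-page with $\SI(Y,\omega_L)$ by iterating the ``drop a component'' argument $m$ times.

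Convergence is immediate from standard homological algebra: the filtration $F^i\bfC(Y,L) = \bigoplus_{|v|\ge i} \CSI(Y_v)$ is exhaustive and bounded with $F^{m+1}=0$, so the spectral sequence converges to $H_*(\bfC(Y,L),\bfD)$ and all differentials $d^r$ with $r>m$ vanish, yielding collapse by the $m^{\text{th}}$ page. The nontrivial content of the theorem is therefore the identification of this total homology with $\SI(Y,\omega_L)$.

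For this identification, I would iteratively apply the chain map $\mathbf{F}:\bfC_{-1}\to \bfC(Y,L)$ constructed above, where $\bfC_{-1}$ is the link surgeries complex for the $(m-1)$-component link $(Y_\infty(L_1),L\setminus L_1)=(Y,L\setminus L_1)$. The proof that $\mathbf{F}$ is a quasi-isomorphism already given for $L_1$ proceeds by filtering both $\bfC_{-1}$ and $\bfC(Y,L)$ by $\sum_{i\ge 2}|v_i|$ and invoking the Double Mapping Cone Lemma: on the $E^0$-page the map $\mathbf{F}^0_p$ decomposes summandwise as $\partial_{vv^0}\oplus\partial_{vv^1}$, and by Propositions \ref{Phi0Triangle} and \ref{hRectangle} together with Proposition \ref{Phi1Triangle} (identifying $\partial_{v^0 v^1}=C\Phi_1$), this precisely matches the triple $(C\Phi_0,h,C\Phi_1)$ from Seidel's exact triangle applied to the surgery triad along $L_1$. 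Working over $\Lambda_R$ coefficients supplies the $\R$-grading needed to separate $\mathbf{F}^0$ into pieces of order $[0,\varepsilon)$ and $[2\varepsilon,\infty)$ as required by the lemma.

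Iterating component by component, I obtain a chain of quasi-isomorphisms
\[
\bfC(Y,L) \simeq \bfC(Y,L\setminus L_1) \simeq \cdots \simeq \bfC(Y,\varnothing) = \CSI(Y,\omega_L),
\]
where the terminal complex has index set $\{0,1\}^0=\{()\}$ and is simply the symplectic instanton chain complex of $Y$, twisted by the class inherited from successively dropping the $L_k$; this is $\omega_L$ since each dropped component contributes its own meridian class and these sum to the class of $L$. Passing to homology and combining with convergence completes the proof.

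The hard part I anticipate is not conceptual but bookkeeping: at each iteration, the higher components of $\bfD$ (counting pseudoholomorphic polygons of all orders, not just triangles) must split into the filtration-order classes demanded by the Double Mapping Cone Lemma uniformly, and the energy quantization controlling the $[0,\varepsilon)$ vs.\ $[2\varepsilon,\infty)$ decomposition must be chosen compatibly across all $m$ steps. Concretely, one must verify that after dropping $L_1,\dots,L_{k-1}$, the resulting complex $\bfC(Y, L\setminus\{L_1,\dots,L_{k-1}\})$ carries the structure needed to apply Seidel's triangle to $L_k$ in the next step, with the chain nullhomotopy $h$ for that step identified as above. Once this is set up, each iteration is formally identical to the single-knot case already treated.
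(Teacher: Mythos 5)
Your proposal matches the paper's argument: the paper also establishes the quasi-isomorphism $\mathbf{F}:\bfC_{-1}\to\bfC(Y,L)$ by filtering both complexes by $\sum_{i\ge 2}|v_i|$, identifying $\mathbf{F}^0$ on the associated graded with the triple $(C\Phi_0, h, C\Phi_1)$ via Propositions \ref{Phi0Triangle}, \ref{hRectangle}, and \ref{Phi1Triangle}, invoking the Double Mapping Cone Lemma over $\Lambda_R$, and iterating the drop over all $m$ components. One small slip: the twist acquired upon dropping a component is the class $\omega_{L_k}$ of the component itself (the term $\omega_K$ in the surgery triangle), not its meridian, but this does not affect the structure of the argument.
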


As was the case for the surgery exact triangle, a nontrivial $\SO(3)$-bundle on $Y$ may be incorporated into the link surgeries spectral seqeuence without changing the argument in any way. Indeed, any $\omega \in H_1(Y; \F_2)$ may be represented by a knot lying entirely in the $\alpha$-handlebody of the Heegaard splittings used in the proof of the link surgeries spectral sequence, and for any multiframing $v \in \{0,1, \infty\}^m$, $\omega$ induces an obvious homology class $\omega_v \in H_1(Y_v; \F_2)$. In this case, the link surgeries spectral sequence has $E^1$-page
\[
	E^1 = \bigoplus_{v \in \{0,1\}^m} \SI(Y_v, \omega_v)
\]
and converges by the $m^\text{th}$ page to $\SI(Y, \omega \cup \omega_L)$.

\section{Khovanov Homology and Symplectic Instanton Homology of Branched Double Covers}

\label{sect:Khovanov}

As an application of the link surgeries spectral sequence, we exhibit a spectral sequence from the Khovanov homology of (the mirror of) a link in $S^3$ to the symplectic instanton homology of its branched double cover. The method of proof is inspired by Ozsv\'ath-Szab\'o's work on the corresponding spectral sequence in Heegaard Floer theory \cite{oz-sz-branched}.

\subsection{Khovanov Homology}

Let us give a definition of Khovanov homology suitable for our applications. Suppose $X \subset \R^2$ is a disjoint union of $k$ simple closed curves, $X = S_1 \cup \cdots \cup S_k$. Define $Z(X)$ to be the $\F_2$-vector space generated by the $S_i$, and write
\[
	V(X) = \wedge^\ast Z(X)
\]
for the corresponding exterior algebra.

Suppose $X' = S_1^\prime \cup \cdots \cup S_{k-1}^\prime$ is obtained from $X$ by merging two components $S_i$ and $S_j$. Then $Z(X') = Z(X)/(S_j - S_i)$ and there are natural isomorphisms
\[
	\alpha: (S_j - S_i) \wedge V(X) \xrightarrow{~\cong~} V(X'), \quad\quad \beta: V(X') \xrightarrow{~\cong~} V(X)/((S_j - S_i) \wedge V(X)).
\]
We may define a multiplication map by
\[
	m: V(X) \longrightarrow V(X'),
\]
\[
	m(\xi) = \alpha((S_j - S_i) \wedge \xi)
\]
and a comultiplication map
\[
	\Delta: V(X') \longrightarrow V(X),
\]
\[
	\Delta(\xi) = (S_j - S_i) \wedge \beta(\xi).
\]

Now, given a link $L$ in $S^3$, choose an oriented link diagram $\calD$ for $L$. Write $n$ for the total number of crossings in $\calD$, and $n_+$ (or $n_-$) for the number of positive (or negative) crossings of $\calD$. Furthermore, choose an enumeration ($1, \dots, n$) of the crossings. For any $I = (\epsilon_1, \dots, \epsilon_n) \in \{0,1\}^n$, let $\calD(I)$ denote the result of resolving all crossings of $\calD$, where the $i^\text{th}$ crossing is replaced with its $\epsilon_i$-resolution (where $0$- and $1$-resolutions are as defined in Figure \ref{fig:skein}). $\calD(I)$ is then a disjoint union of circles in the plane, so we can define $Z(\calD(I))$ and $V(\calD(I))$ for all $I \in \{0,1\}^n$. The {\bf $i^\text{th}$ Khovanov chain group} is given by
\[
	\CKh(\calD, i) = \bigoplus_{\substack{I \in \{0,1\}^n \\ |I| + n_+ = i}} V(\calD(I))
\]
The Khovanov differential
\[
	d: \CKh(\calD, i) \longrightarrow \CKh(\calD, i - 1)
\]
is defined by
\[
	d|_{V(\calD(I))} = \sum_{\substack{I^\prime \text{ is an immediate} \\ \text{successor of $I$}}} d_{I < I^\prime},
\]
where each $d_{I < I^\prime}: V(\calD(I)) \longrightarrow V(\calD(I'))$ is either a multiplication or comultiplication map, depending on whether $V(\calD(I'))$ differs from $V(\calD(I))$ by the merging or splitting of circles. It is easily checked that $d^2 = 0$, so that the {\bf Khovanov homology}
\[
	\Kh(\calD) = H_\ast\left( \bigoplus_i \CKh(\calD, i), d\right)
\]
is defined. $\Kh(\calD)$ is independent of the diagram $\calD$ of $L$ up to isomorphism, and therefore we will write $\Kh(L)$ to denote this isomorphism class of groups.

\begin{figure}
	\centering
	\includegraphics[scale=1.5]{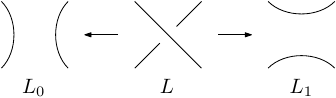}
	\caption{$0$- and $1$-resolutions of a crossing in a link.}
	\label{fig:skein}
\end{figure}

We will actually need a reduced variant of Khovanov homology. Fix a basepoint $p \in L$. Then a generic diagram $\calD$ for $L$ will have an induced basepoint, also denoted $p$, away from all crossings in $\calD$. It follows that for any $I \in \{0,1\}^n$ (where $n$ is the number of crossings in $\calD$), there is a disinguished component $S_I \in Z(\calD(I))$ containing $p$. Let us write
\[
	\tilde{V}(\calD(I)) = S_I \wedge V(\calD(I)).
\]
Then the {\bf $i^\text{th}$ reduced Khovanov chain group} is then defined by
\[
	\CKhr(\calD, i) = \bigoplus_{\substack{I \in \{0,1\}^n \\ |I| + n_+ = i}} \tilde{V}(\calD(I)),
\]
and the {\bf reduced Khovanov homology} is the homology of this subcomplex:
\[
	\Khr(\calD) = H_\ast\left( \bigoplus_i \CKhr(\calD, i), d\right).
\]
$\Khr(\calD)$ (with $\F_2$ coefficients) is independent of the choice of diagram $\calD$ for $L$ as well as the choice of basepoint $p \in L$ up to isomorphism, and we will write $\Khr(L)$ for this isomorphism class of groups.

\subsection{Branched Double Covers of Unlinks}

It turns out that the branched double cover of any link in $S^3$ is obtained from the branched double cover of some unlink by adding finitely many $2$-handles. In this section, we explain why this is true and give a natural identification of the symplectic instanton homology of the branched double cover of an unlink.

Let $L \subset S^3$ be any link and fix a diagram\footnote{It is not necessary to use a diagram for $L$ in this section, but we do it for simplicity and also because we will eventually relate to Khovanov homology, which does use a link diagram in its chain-level definition.} $\calD$ for $L$. A {\bf Conway sphere} for $L$ is an embedded $2$-sphere $S$ in $S^3$ intersecting $L$ transversely in exactly $4$ points. Such a sphere divides $S^3$ into two $3$-balls; let us suppose that one of these $3$-balls, $B$, contains exactly one crossing of the diagram $\calD$. Write $\tilde{Y}$ for the branched double cover of $S^3 \setminus B$ branched over $L \setminus (L \cap B)$. Note that the branched double cover of a $3$-ball branched over two arcs is a solid torus (via the hyperelliptic involution), and therefore $\tilde{Y}$ is $\Sigma(L)$ with a solid torus removed. The meridian $\gamma$ of this torus (in $\Sigma(L)$) is precisely the branched double cover of the pair of arcs from $L$.

Now consider the $0$- and $1$-resolutions $L_0$ and $L_1$ of $L$ (as pictured in Figure \ref{fig:skein}) at the single crossing inside $B$. Again, $\tilde{Y}$ is $\Sigma(L_0)$ minus a solid torus and also $\Sigma(L_1)$ minus a solid torus. However, the meridians $\gamma_0$ (resp. $\gamma_1$) of these solid tori correspond to the branched double covers of the pair of arcs $L_0 \cap B$ (resp. $L_1 \cap B$) in $\Sigma(L_0)$ (resp. $\Sigma(L_1)$). It is easily checked that $\#(\gamma \cap \gamma_0) = \#(\gamma_0 \cap \gamma_1) = \#(\gamma_1 \cap \gamma) = -1$. It therefore follows that $(\Sigma(L), \Sigma(L_0), \Sigma(L_1))$ is a surgery triad.

By resolving all the crossings of some diagram $\calD$ for $L \subset S^3$, we obtain an unlink with $k+1$ components, for some non-negative integer $k$. The particular choice of resolutions tells us how to attach $2$-handles to $\Sigma(L)$ to obtain $\Sigma(U \amalg \cdots \amalg U)$. The branched double cover of the two component unlink is $S^2 \times S^1$, and more generally the branched double cover of the $k+1$-component unlink is $\#^k S^2 \times S^1$. We already know the symplectic instanton homology of $\#^k S^2 \times S^1$ (as a unital algebra, no less), but we wish to have a geometric interpretation of it related to the fact that $\#^k S^2 \times S^1$ is the branched double cover of the $k+1$ component unlink. The following proposition establishes such an interpretation.

\begin{prop}
\label{prop:H1-module}
$\SI(\#^k S^2 \times S^1)$ is a free $\Lambda^\ast H_1(\#^k S^2 \times S^1)$-module of rank $1$, generated by the element $\Theta_k$ corresponding to the usual highest index intersection point of the relevant Lagrangians. Furthermore, the $\Lambda^\ast H_1(\#^k S^2 \times S^1; \F_2)$-module structure of $\SI(\#^k S^2 \times S^1)$ is natural with respect to $2$-handle cobordism maps, meaning the following:
\begin{itemize}
	\item[(1)] If $K$ is a knot dual to one of the $\mathrm{pt} \times S^1$'s in a summand of $\#^k S^2 \times S^1$, then $0$-surgery on $K$ gives $\#^{k-1} S^2 \times S^1$, where the component whose $S^2 \times S^1$ summand $K$ was dual to has been removed. If $\pi: H_1(\#^k S^2 \times S^1)/[K] \longrightarrow H_1(\#^{k-1} S^2 \times S^1)$ denotes the identification naturally induced by this $0$-surgery and $W: \#^k S^2 \times S^1 \longrightarrow \#^{k-1} S^2 \times S^1$ denotes the $2$-handle cobordism induced by the $0$-surgery, then
	\[
		F_W(\xi \cdot \Theta_k) = \pi(\xi) \cdot \Theta_{k-1}
	\]
	for all $\xi \in \Lambda^\ast H_1(\#^k S^2 \times S^1)$.
	\item[(2)] If $K$ is the unknot in $\#^k S^2 \times S^1$ and $W: \#^k S^2 \times S^1 \longrightarrow \#^{k+1} S^2 \times S^1$ is the $2$-handle cobordism induced by $0$-surgery on $K$, then
	\[
		F_W(\xi \cdot \Theta_k) = (\xi \wedge [K_W]) \cdot \Theta_{k+1}
	\]
	for all $\xi \in \Lambda^\ast H_1(\#^k S^2 \times S^1)$, where $[K_W]$ is the generator of the kernel of $i_\ast: H_1(\#^k S^2 \times S^1) \longrightarrow H_1(W)$.
\end{itemize}
\end{prop}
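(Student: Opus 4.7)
The plan is to prove this in three stages: set up the $\Lambda^\ast H_1$-module structure and establish freeness, reduce the naturality statements to two local cobordism-map computations via connect-sum decompositions, and pin down those local maps using the surgery exact triangle.

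For the module structure, I would iterate Theorem~\ref{thm:connectsum} and combine it with Proposition~\ref{prop:S1xS2} to obtain an isomorphism of $\Z/2$-graded unital algebras $\SI(\#^k S^2 \times S^1) \cong H^{3-\ast}(S^3)^{\otimes k}$. The target is an exterior algebra on $k$ generators, canonically isomorphic to $\Lambda^\ast H_1(\#^k S^2 \times S^1; \F_2)$ by matching each generator $[\mathrm{pt} \times S^1]$ of the $i$-th summand with the unique non-unit generator of the $i$-th tensor factor. Under this identification the highest-index intersection point $\Theta_k$ --- which, by the discussion at the end of Section~\ref{sect:S3-S2xS1}, is the unit of the algebra --- corresponds to $1 \in \Lambda^\ast H_1$ and is a free generator of rank one, giving the first claim.

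For parts (1) and (2), I would exploit the fact that $K$ sits in a single summand of a natural connect-sum decomposition. In case (2), the unknot $K$ lies in a 3-ball, giving $\#^k S^2 \times S^1 = S^3 \#(\#^k S^2 \times S^1)$ with $K$ in the $S^3$ summand, so $W$ is the boundary sum of the standard $0$-surgery cobordism $W_0 \colon S^3 \to S^2 \times S^1$ with a cylinder on the remaining summand. In case (1), $K$ can be chosen inside a single $S^2 \times S^1$ factor, yielding an analogous splitting with local model $W_1 \colon S^2 \times S^1 \to S^3$. Granting that the cobordism maps factor as $F_W = F_{W_0} \otimes \id$ and $F_W = F_{W_1} \otimes \id$ respectively, the formulas for arbitrary $k$ reduce to computing these two local maps, and both base cases follow from specializing the surgery exact triangle to the framed unknot $(U, 0) \subset S^3$:
\[
\SI(S^3) \xrightarrow{F_{W_0}} \SI(S^2 \times S^1) \xrightarrow{F_{W_1}} \SI(S^3) \xrightarrow{[1]} \SI(S^3).
\]
Since $\chi(\SI(S^3)) = 1$ and $\chi(\SI(S^2 \times S^1)) = 0$, exactness forces $F_{W_0}$ to be injective and $F_{W_1}$ to be surjective, with $\im(F_{W_0}) = \ker(F_{W_1})$ a rank-one subgroup. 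Combining this with the $\Z/2$-grading information recorded after the surgery triangle and the fact that $\Theta_1$ and $[K_W] \cdot \Theta_1$ lie in opposite mod-$2$ degrees inside $\SI(S^2 \times S^1)$, one uniquely identifies $F_{W_0}(1) = [K_W] \cdot \Theta_1$ and $F_{W_1}(\Theta_1) = \Theta_0$, $F_{W_1}([K_W] \cdot \Theta_1) = 0$, which are precisely the formulas required in (2) and (1) respectively.

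The hard part will be establishing the K\"unneth compatibility of the cobordism maps, since Theorem~\ref{thm:connectsum} only compares the chain complexes of the endpoints and not the maps between them. To secure the factorizations $F_W = F_{W_0} \otimes \id$ and $F_W = F_{W_1} \otimes \id$, I would choose a Heegaard triple subordinate to the bouquet of $K$ that itself splits as a connect sum of a small local triple with a trivial one, and then argue via a neck-stretching/Gromov compactness argument for pseudoholomorphic triangles --- using the product formula for the canonical elements $\Theta_{\beta\gamma}$ under geometric composition developed in Section~\ref{sect:S3-S2xS1} --- that the triangle-counting map respects the tensor splitting. Once this compatibility is in hand, the rest of the argument is routine exactness-and-degree bookkeeping.
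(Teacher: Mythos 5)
Your overall framing --- establish the module structure via the K\"unneth principle, split the cobordism map as (local piece) $\otimes$ (identity) using a connect-sum Heegaard triple, and then determine the local map --- is close in spirit to the paper's proof, which also works with a connect-sum Heegaard triple $\#^{k-1}\calH \# \calH'$ (resp.\ $\#^k\calH \# \calH''$) whose genus-one factors are the standard ones. Where you genuinely diverge is in the last step: the paper computes the genus-one triangle maps by ``direct inspection'' (they are closest-point maps on the relevant cleanly-intersecting Lagrangians, with a Maslov index argument killing the $X_k$-wedge-factor terms), whereas you propose to extract them from the surgery exact triangle for the unknot in $S^3$.

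The rank bookkeeping in your exact-triangle argument is fine: $F_{W_0}$ must be injective and $F_{W_1}$ surjective, and $\im(F_{W_0}) = \ker(F_{W_1})$ has rank one. But the step ``combining the $\Z/2$-grading information \ldots one uniquely identifies $F_{W_0}(1) = [K_W]\cdot\Theta_1$'' has a gap: the mod-$2$ grading tells you only that $\Theta_1$ and $[K_W]\cdot\Theta_1$ span opposite graded pieces, not which one $\im(F_{W_0})$ lands in. The absolute $\Z/2$-grading of $\SI$ depends on orientation choices for the Lagrangians and the ambient moduli space, so without fixing and tracking those conventions through both the Morse--Bott identification of $\SI(S^2\times S^1)$ and the triangle-map gradings, the degree argument cannot distinguish $F_{W_0}(1) = \Theta_1$ from $F_{W_0}(1) = \theta$. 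The cleaner way to close this gap is to invoke Lemma~\ref{lem:12handlecancel}: $0$-surgery on $\mathrm{pt}\times S^1 \subset S^2\times S^1$ is the $2$-handle that cancels the $1$-handle cobordism $V: S^3 \to S^2\times S^1$, so $F_{W_1}\circ F_V = \mathrm{id}_{\SI(S^3)}$, and since $F_V(1) = \Theta_1$ by the definition of the $1$-handle map, $F_{W_1}(\Theta_1) \neq 0$. Thus $\ker(F_{W_1})$ is spanned by $\theta = [K_W]\cdot\Theta_1$, which then pins down $F_{W_0}(1)$ and, dually, all of case (1). With that repair, and an actual proof of the K\"unneth compatibility for the cobordism maps (which the paper sidesteps by exhibiting Heegaard triples that literally split, so that the triangle count factors by the product formula for $\Theta_{\beta\gamma}$; your neck-stretching sketch is essentially the same input presented as analysis rather than combinatorics), your route goes through and would give an alternative, somewhat more structural proof.
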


\begin{proof}
We already know that $\SI(\#^k S^2 \times S^1) \cong H^{3 - \ast}(S^3)^{\otimes k}$ as a unital algebra. We will write $H^{3-\ast}(S^3) = \F_2\langle\Theta, \theta\rangle$, with $\Theta$ the unit. $H_1(\#^k S^2 \times S^1; \F_2)$ has as basis the homology classes of the $\text{pt} \times S^1$'s in each connect summand; denote these classes by $X_1, \dots, X_k$. We define a $\Lambda^\ast H_1(\#^k S^2 \times S^1; \F_2)$-module structure on $\F_2 \langle \Theta, \theta\rangle^{\otimes k}$ as follows. On $\Theta^{\otimes k}$, the action is
\begin{align*}
	X_i \cdot \Theta^{\otimes k} & \mapsto \Theta^{\otimes (i - 1)} \otimes \theta \otimes \Theta^{\otimes (k - i - 1)}, \\
	(X_i \wedge X_j) \cdot \Theta^{\otimes k} & \mapsto \Theta^{\otimes (i - 1)} \otimes \theta \otimes \Theta^{\otimes (j - i - 1)} \otimes \theta \otimes \Theta^{\otimes (k - j - i - 2)} \quad \quad (i < j) \\
	 & \vdots \\
	(X_1 \wedge \cdots \wedge X_k) \cdot \Theta^{\otimes k} & \mapsto \theta^{\otimes k}.
\end{align*}
We see that every element $\eta \in \F_2\langle \Theta, \theta \rangle^{\otimes k}$ can be written as $\xi_\eta \cdot \Theta^{\otimes k}$ for a unique $\xi_\eta \in \Lambda^\ast H_1(\#^k S^2 \times S^1; \F_2)$. We extend the action to all of $\F_2\langle \Theta, \theta\rangle^{\otimes k}$ by setting
\[
	\xi \cdot \eta = (\xi \wedge \xi_\eta) \cdot \Theta^{\otimes k}.
\]
It is clear that with this action, $\F_2\langle \Theta, \theta\rangle^{\otimes k} \cong \Lambda^\ast H_1(\#^k S^2 \times S^1; \F_2)\langle\Theta^{\otimes k}\rangle$ as a $\Lambda^\ast H_1(\#^k S^2 \times S^1; \F_2)$-module. Now we prove the naturality with respect to $2$-handle cobordism maps, as in statements (1) and (2) of the Proposition.

(1) Thinking of the fundamental group of the torus as generated by the standard meridian and longitude $\mu$ and $\lambda$, consider the Heegaard triple diagrams $\calH = (\Sigma_1, \lambda, \lambda, \lambda)$ and $\calH' = (\Sigma_1, \lambda, \lambda, \mu)$. It is then easy to see that the triple diagram $\#^{k-1} \calH \# \calH'$ represents $0$-surgery in $\#^k S^2 \times S^1$ along the knot $K = \text{pt} \times S^1$ coming from the $S^2 \times S^1$ summand corresponding to the $\alpha$- and $\beta$-curves of $\calH'$. Direct inspection then reveals that
\[
	F_W(\xi \cdot \Theta_k) = \pi(\xi) \cdot \Theta_{k-1};
\]
if $\xi$ contains a wedge factor of $X_k$ then $F_W(\xi \cdot \Theta_k) = 0$ for Maslov index reasons, and if $\xi$ does not contain a wedge factor of $X_k$ then the claimed formula holds by considering $F_W$ being a closest point map on the factor corresponding to $\calH'$.

(2) Continuing the notation above, consider a third Heegaard triple $\calH'' = (\Sigma_1, \mu, \lambda, \lambda)$. Then the triple diagram $\#^k \calH \# \calH''$ represents $0$ surgery on $\#^k S^2 \times S^1$ along the unknot $K$ in the $S^3$ summand corresponding to the $\alpha$- and $\beta$- curves of $\calH''$. Similar arguments to the above establish that
\[
	F_W(\xi \cdot \Theta_k) = (\xi \wedge [K_W]) \cdot \Theta_{k+1};
\]
$F_W$ can be thought of as inducing a closest point map on all factors, and the $[K_W]$ appears because the factor $\calH''$ always gives the trivial connection $\theta$.
\end{proof}

\subsection{A Spectral Sequence from Khovanov Homology}

We now explain how Khovanov homology relates to the symplectic instanton homology of $\#^k S^2 \times S^1$ and deduce that the $E^2$-page of a certain link surgery spectral sequence associated to a branched double cover $\Sigma(L)$ is isomorphic to $\Khr(m(L); \F_2)$.

Let $L \subset S^3$ be a link and fix an $n$-crossing diagram $\calD$ for the \emph{mirror} link $m(L)$. As before, given any $I \in \{0,1\}^n$, we may construct the associated resolved diagram $\calD(I)$.

\begin{prop}
\label{prop:E2-Kh}
For any $I \in \{0,1\}^n$, there is an isomorphism $\Psi_I: \tilde{V}(\calD(I)) \longrightarrow \SI(\Sigma(\calD(I)))$ which is natural in the following sense: if $I' \in \{0,1\}^n$ is an immediate successor of $I$ and $F_{I < I^\prime}: \SI(\Sigma(\calD(I))) \longrightarrow \SI(\Sigma(\calD(I')))$ is the map induced by the associated $2$-handle cobordism on the branched double covers, then the following diagram commutes:
\[
	\xymatrix{\tilde{V}(\calD(I)) \ar[r]^{d_{I < I^\prime}} \ar[d]_{\Psi_I} & \tilde{V}(\calD(I')) \ar[d]^{\Psi_{I^\prime}} \\
	\SI(\Sigma(\calD(I))) \ar[r]_{F_{I < I^\prime}} & \SI(\Sigma(\calD(I')))}
\]
\end{prop}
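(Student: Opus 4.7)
The plan is to apply Proposition \ref{prop:H1-module} to identify $\SI(\Sigma(\calD(I)))$ as a free $\Lambda^\ast H_1(\Sigma(\calD(I)); \F_2)$-module of rank one, and then to match the generators and differentials of the reduced Khovanov complex with the corresponding $H_1$-action and $2$-handle cobordism maps on the symplectic instanton side. Since $\calD(I)$ is an unlink with $k_I + 1$ components, we have $\Sigma(\calD(I)) \cong \#^{k_I} S^2 \times S^1$. Label the circles of $\calD(I)$ as $S_I, S_1, \dots, S_{k_I}$, with $S_I$ containing the basepoint, and choose disjoint arcs $\delta_i$ from $S_I$ to $S_i$ in the plane. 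Each $\delta_i$ lifts to a knot in $\Sigma(\calD(I))$ whose class $\alpha_i \in H_1(\Sigma(\calD(I)); \F_2)$ is the generator of the $i^{\text{th}}$ $S^2 \times S^1$ summand, and the $\{\alpha_i\}$ form a basis. Define
\[
  \Psi_I(S_I \wedge S_{i_1} \wedge \cdots \wedge S_{i_r}) = (\alpha_{i_1} \wedge \cdots \wedge \alpha_{i_r}) \cdot \Theta_{k_I}.
\]
Both $\tilde V(\calD(I))$ and $\Lambda^\ast H_1(\Sigma(\calD(I)); \F_2)$ have dimension $2^{k_I}$, and $\Psi_I$ carries the indicated basis bijectively onto the basis $\{(\alpha_{i_1} \wedge \cdots \wedge \alpha_{i_r}) \cdot \Theta_{k_I}\}$, so it is an $\F_2$-linear isomorphism.

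For naturality, fix an immediate successor $I < I'$ differing at a single crossing. The resolved diagrams $\calD(I)$ and $\calD(I')$ differ by either a \emph{merge} (two circles combine into one, so $k_{I'} = k_I - 1$) or a \emph{split} ($k_{I'} = k_I + 1$). In each case the cobordism $W_{I<I'}: \Sigma(\calD(I)) \to \Sigma(\calD(I'))$ is $0$-surgery on the knot $K \subset \Sigma(\calD(I))$ obtained by lifting the arc at the changed crossing; this is the standard $2$-handle cobordism from the surgery triad for branched double covers discussed in Example \ref{ex:bdc}. In the merge case, $K$ lifts an arc joining circles $S_i, S_j$ of $\calD(I)$ and therefore represents the class $\alpha_i + \alpha_j$ in $H_1(\Sigma(\calD(I)); \F_2)$ (with the convention $\alpha_0 := 0$ when the basepoint circle is involved). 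Proposition \ref{prop:H1-module}(1) then shows that the cobordism map sends $\xi \cdot \Theta_{k_I}$ to $\pi(\xi) \cdot \Theta_{k_I - 1}$, where $\pi$ imposes $\alpha_i + \alpha_j = 0$. The Khovanov multiplication $m(\eta) = \alpha((S_j - S_i) \wedge \eta)$ similarly imposes $S_i = S_j$ in the exterior algebra, and a direct check from the definitions shows that under $\Psi$ these two operations agree.

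In the split case, one circle $S_\ell$ of $\calD(I)$ is divided into two circles $S_\ell', S_\ell''$ of $\calD(I')$; here the knot $K$ is an unknot in $\Sigma(\calD(I))$, and Proposition \ref{prop:H1-module}(2) says the cobordism map sends $\xi \cdot \Theta_{k_I}$ to $(\xi \wedge [K_W]) \cdot \Theta_{k_I + 1}$, where $[K_W]$ is the new generator of $H_1$ coming from the pair $(S_\ell', S_\ell'')$ (the class of a lift of an arc joining these two circles through the surgery region). Under $\Psi$ this corresponds precisely to the Khovanov comultiplication $\Delta(\eta) = (S_\ell'' - S_\ell') \wedge \beta(\eta)$, completing the verification. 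The main obstacle will be the careful bookkeeping required when the basepoint circle $S_I$ is itself one of the circles involved in the merge or split: one must reconcile the change in the distinguished basepoint element of $\tilde V$ with the corresponding change of basis for $H_1$ of the branched double cover, and verify that the distinguished classes $[K]$ and $[K_W]$ realize the expected quotient and exterior multiplication on $\Lambda^\ast H_1$ uniformly across these subcases.
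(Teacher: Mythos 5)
Your proposal follows the same overall strategy as the paper: define $\Psi_I$ by sending the circle generators $S_i$ to the $H_1$-classes of lifts of arcs from the basepoint circle and invoking the $\Lambda^\ast H_1$-module structure of Proposition \ref{prop:H1-module}, then verify naturality by splitting $d_{I<I'}$ into a merge and a split case.

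The most noticeable difference from the published proof is that your case assignments are \emph{reversed} from the paper's: you pair a merge with Proposition \ref{prop:H1-module}(1) and Khovanov multiplication, and a split with Proposition \ref{prop:H1-module}(2) and comultiplication, whereas the paper's text invokes case (2) for a merge (calling the lift of $\delta$ an unknot) and case (1) for a split, with a remark that mirroring swaps $m$ and $\Delta$. Your assignment is the geometrically correct one. A merge drops the number of circles in $\calD(I)$ by one, so $\Sigma(\calD(I'))$ has one fewer $S^2 \times S^1$ summand than $\Sigma(\calD(I))$; the cobordism therefore \emph{loses} a summand, which is precisely case (1). Moreover, the lift of the arc $\delta$ joining two distinct circles $S_i, S_j$ represents $\alpha_i + \alpha_j$ (with the convention $\alpha_0 = 0$), which is nonzero whenever neither circle is the marked one --- so that lift cannot be an unknot, contrary to what the paper's sentence says. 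The paper's remark about mirroring swapping $m$ and $\Delta$ is beside the point here: all Khovanov algebra in this proposition is computed in the complex of $\calD$ itself (which is a diagram for $m(L)$), and in that complex a merge is still the Frobenius multiplication $m$. So on this point you have actually tightened the argument relative to the paper's prose.

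There is, however, a genuine gap you should address more explicitly, and it is not where your closing paragraph places it. For a merge of two \emph{non-basepoint} circles, the surgery knot $K$ represents $\alpha_i + \alpha_j$, which is \emph{not} one of the $\mathrm{pt} \times S^1$ generators; Proposition \ref{prop:H1-module}(1) as literally stated only covers $K$ dual to a single $\mathrm{pt} \times S^1$. You need either to extend that proposition to any primitive class (the Heegaard-triple argument in its proof adapts, but this should be stated), or to precompose with a diffeomorphism of $\#^k S^2 \times S^1$ sending $\alpha_i + \alpha_j$ to a basis element and then check that this change of basis is compatible with $\Psi_I$. By contrast, the subcase you flag as the ``main obstacle'' --- when the basepoint circle participates in the merge or split --- is actually the easy one: with $\alpha_0 = 0$ the surgery class collapses to a single $\alpha_j$, and Proposition \ref{prop:H1-module} applies directly, exactly as the paper notes at the very end of its proof with the $X_0 = 0$ convention.
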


\begin{proof}
The proof is really just a matter of understanding the relationship between merging/splitting unlinks and surgeries on the branched double cover, and then applying Proposition \ref{prop:H1-module}.

Fix a basepoint $p \in L$ and write $\calD(I) = S(I)_1 \amalg \dots \amalg S(I)_{k(I)}$, where we order the circle components so that $S(I)_1$ contains the basepoint $p$, and similarly write $\calD(I') = S(I')_1 \amalg \cdots \amalg S(I')_{k(I')}$. For convenience, we will assume further that the components are ordered so that the Khovanov differential $d_{I<I'}$ only possibly involves $\{S(I)_1, S(I)_2, S(I)_3\} \subset \calD(I)$ and $\{S(I')_1, S(I')_2, S(I')_3\} \subset \calD(I')$.

As in the proof of Proposition \ref{prop:H1-module}, $H_1(\Sigma(\calD(I)); \F_2)$ has as basis the homology classes $X_1, \dots, \allowbreak X_{k(I)}$ which arise as $\text{pt} \times S^1$ in each summand of $\#^{k(I)} S^2 \times S^1 \cong \Sigma(\calD(I))$. If $\gamma_i$ denotes an arc in $S^3$ whose only points in $\calD(I)$ are its endpoints, one of which lies on $S(I)_1$ and the other of which lies on $S(I)_{i+1}$, then $X_i$ is mod $2$ homologous to the branched double cover of $\gamma_i$.

We have an identification of $\SI(\Sigma(\calD(I)))$ as a rank $1$ free $\Lambda^\ast H_1(\Sigma(\calD(I)); \F_2)$-module from Proposition \ref{prop:H1-module}. It follows that the identification $S(I)_i \mapsto X_i$ induces an isomorphism $\Psi_I: \tilde{V}(\calD(I)) \longrightarrow \SI(\Sigma(\calD(I)))$.

\begin{figure}[h]
	\centering
	\includegraphics[scale=1.25]{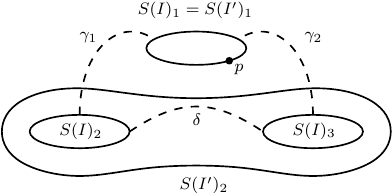}
	\caption{Merging two circles and the effect on homology.}
	\label{fig:branched-surgery}
\end{figure}

Suppose $\calD(I')$ differs from $\calD(I)$ by the merging of $S(I)_2$ and $S(I)_3$ into $S(I')_2$ (see Figure \ref{fig:branched-surgery} for a schematic). Then it is clear that $X_1^\prime, X_2^\prime \in H_1(\Sigma(\calD(I')); \F_2)$ are homologous, since after merging, both $\gamma_1$ and $\gamma_2$ will have an endpoint on $S(I')_2$. One then sees that $\Sigma(\calD(I'))$ is obtained from $\Sigma(\calD(I))$ by $0$-surgery along the branched double cover of the pictured arc $\delta$, which is just an unknot in $\Sigma(\calD(I))$. It follows from case (2) of Proposition \ref{prop:H1-module} that in this case, $F_{I < I^\prime} \circ \Psi_I = \Psi_{I^\prime} \circ d_{I < I^\prime}$, where we note that merging is corresponding to comultiplication on $\tilde{V}(\calD(I))$ because $\calD$ is a diagram for the mirror of $L$, and mirroring swaps the roles of multiplication and comultiplication in the Khovanov differential.

Dually, if $\calD(I')$ is obtained from $\calD(I)$ by splitting $S(I)_2$ into $S(I')_2$ and $S(I')_3$, a similar argument shows that $X_1^\prime$ and $X_2^\prime$ are homologous in $H_1(\Sigma(\calD(I')); \F_2)$ and $\Sigma(\calD(I'))$ is obtained from $\Sigma(\calD(I))$ by $0$-surgery along $X_1$. Case (1) of Proposition \ref{prop:H1-module} then implies that $F_{I < I^\prime} \circ \Psi_I = \Psi_{I^\prime} \circ d_{I < I^\prime}$.

There are two additional cases to consider, namely merges and splits involving the circle with the marked point $p$. But these are dealt with in the same way as before, if we introduce $\gamma_0 = 0$ and $X_0 = 0$.
\end{proof}

We can now derive the main result of this section.

\begin{thm}
\label{thm:Kh-SS}
For any link $L$ in $S^3$, there is a spectral sequence with $E^2$ page given by $\Khr(m(L); \F_2)$ abutting to $\SI(\Sigma(L); \F_2)$.
\end{thm}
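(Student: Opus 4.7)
The plan is to realize $\SI(\Sigma(L); \F_2)$ as the target of a link surgeries spectral sequence (Theorem 7.2) applied to a carefully chosen framed link in $\Sigma(L)$, and then to use Propositions 8.2 and 8.3 to identify the early pages with the reduced Khovanov complex.

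\textbf{Setup.} Fix a planar diagram $\calD$ for $m(L)$ with $n$ crossings, labeled $1, \dots, n$. At each crossing, the pair of arcs being over/under is contained in a small Conway sphere; repeating Example 5.3 at each crossing and taking disjoint small neighborhoods produces pairwise disjoint framed knots $K_1, \dots, K_n$ in the branched double cover (which, ignoring orientation, we identify with $\Sigma(L)$) together with framings $\lambda_i$ such that for any $v \in \{0,1\}^n$, performing the multi-framing $v$ on $\bbL = K_1 \cup \cdots \cup K_n$ yields exactly the branched double cover $\Sigma(\calD(v))$ of the fully resolved diagram $\calD(v)$. Since each $K_i$ is obtained by lifting an arc that cobounds a ``saddle surface'' in $\Sigma(L)$ connecting the two resolutions, each $K_i$ is nullhomologous over $\F_2$; therefore $\omega_{\bbL} = 0 \in H_1(\Sigma(L); \F_2)$ and all intermediate bundles $\omega_v$ also vanish.

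\textbf{$E^1$ and $E^2$ pages.} The link surgeries spectral sequence (Theorem 7.2) of $\bbL \subset \Sigma(L)$ has
\[
    E^1 = \bigoplus_{v \in \{0,1\}^n} \SI(\Sigma(\calD(v)); \F_2), \qquad d^1 = \sum_{v \prec v'} \partial_{vv'},
\]
and converges to $\SI(\Sigma(L); \F_2)$. For each $v$ the resolved diagram $\calD(v)$ is an unlink with $k(v)$ components, so $\Sigma(\calD(v)) \cong \#^{k(v)-1}(S^2 \times S^1)$ and Proposition 8.2 identifies $\SI(\Sigma(\calD(v)))$ with the free rank-one $\Lambda^{*}H_1(\Sigma(\calD(v)); \F_2)$-module generated by the canonical top-degree element $\Theta_{v}$; concretely this produces an isomorphism $\Psi_v : \tilde{V}(\calD(v)) \xrightarrow{\cong} \SI(\Sigma(\calD(v)))$ as in Proposition 8.3. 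Summing over $v$ gives
\[
    E^1 \cong \bigoplus_{v \in \{0,1\}^n} \tilde{V}(\calD(v)) = \CKhr(\calD; \F_2),
\]
up to an overall homological shift. Proposition 8.3 asserts that under each $\Psi_v$ the cobordism map $\partial_{vv'}$ for an immediate successor $v \prec v'$ intertwines with the Khovanov differential $d_{v < v'}$, so $d^1$ is identified with the reduced Khovanov differential of the mirror diagram $\calD$. Passing to $d^1$-homology yields
\[
    E^2 \cong \Khr(m(L); \F_2).
\]

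\textbf{Convergence and obstacles.} Since the link surgeries spectral sequence collapses to $\SI(\Sigma(L); \F_2)$ (using $\omega_{\bbL} = 0$), the theorem follows. The substantive work hides in two places: first, verifying that the framings $\lambda_i$ on the $K_i$ can be chosen consistently so that every multi-framing $v$ actually realizes the branched double cover of $\calD(v)$ (this is really an application of the fact that on the level of triads the lift of the crossing arc does cut out the right $\lambda_i$-$(\lambda_i+\mu_i)$ surgery triad of Example 5.3, and then checking that distinct crossings give disjoint pieces because they sit in disjoint Conway balls). Second, and more delicately, one must confirm that the higher polygon maps in $\bfD$ used to define the filtered complex $\bfC(Y,L)$ agree with the higher Khovanov-style compositions only through the pieces already controlled in Proposition 8.3, so that they contribute to $d^1$ in the expected way; this amounts to a careful check that the relevant pseudoholomorphic polygons reduce, via the K\"unneth decomposition along the connect sums in $\#^{k(v)-1}(S^2 \times S^1)$, to the multiplication/comultiplication maps on the exterior algebras. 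This K\"unneth reduction together with the naturality statement of Proposition 8.3 is the main technical point; once it is in hand the rest of the argument is bookkeeping.
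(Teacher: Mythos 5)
Your overall strategy mirrors the paper's: apply the link surgeries spectral sequence (Theorem~7.2) to the $n$-component link $\mu_1 \cup \cdots \cup \mu_n$ in $\Sigma(L)$ obtained by lifting the crossing arcs of a diagram for $m(L)$, and then use Propositions~8.2 and 8.3 to match the $E^1$-page with $\CKhr(\calD;\F_2)$ and the $d^1$-differential with the Khovanov differential. That part is sound.

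There is, however, a genuine gap in your identification of the abutment. You assert that each individual $K_i$ ``cobounds a saddle surface'' in $\Sigma(L)$ and is therefore nullhomologous over $\F_2$, so that $\omega_{\bbL} = 0$. This is false in general. The link surgeries spectral sequence abuts to $\SI(\Sigma(L), \omega_{\bbL})$ where $\omega_{\bbL} = [\mu_1] + \cdots + [\mu_n] \in H_1(\Sigma(L);\F_2)$; the individual classes $[\mu_i]$ need not vanish. As a concrete counterexample, take $L$ the Hopf link, so $\Sigma(L) \cong \R P^3$ with $H_1(\R P^3;\F_2) = \F_2$. Each crossing arc lifts to a knot $\mu_i \subset \R P^3$ that admits a Dehn filling yielding $S^3$ (namely the filling giving $\Sigma(\text{unknot})$); a nullhomologous knot in $\R P^3$ admits no such filling, since any Dehn filling on it retains a $\Z/2$ in first homology. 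Thus $[\mu_1] \neq 0$ (and likewise $[\mu_2] \neq 0$). What \emph{is} true, and what is needed for the theorem, is that the total class $[\mu] = [\mu_1] + \cdots + [\mu_n]$ vanishes. Establishing this requires a real argument: the paper constructs an explicit surface in $S^3$ (via a modification of the Seifert algorithm that fills circles of the oriented resolution with $a_k b_k = +1$ and attaches half-shaded twisted bands) whose lift to $\Sigma(L)$ has boundary exactly $\mu$. Your ``saddle surface'' heuristic does not produce a closed surface in $\Sigma(L)$ bounded by $\mu_i$ and cannot be repaired to a per-component statement; you need the global surface. Once that step is corrected, the rest of the argument — identifying $E^1$ with the reduced Khovanov chain groups via Proposition~8.2 and the $d^1$-differential via Proposition~8.3 — goes through as you describe.
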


\begin{proof}
Fix a diagram $\calD$ for $m(L)$ with $n$ crossings and a basepoint $p$. At the $i^\text{th}$ crossing of $\calD$, let $\mu_i \in H_1(\Sigma(L); \F_2)$ be the mod $2$ homology class represented by the branched double cover of the arc in $S^3$ connecting the two strands of $L$ at that crossing. Write $\mu = \mu_1 + \cdots + \mu_n \in H_1(\Sigma(L); \F_2)$. Given any $I \in \{0,1\}^n$, we have that $I$-framed surgery along the link $\mu = \mu_1 \amalg \cdots \amalg \mu_n$ results in $\Sigma(\calD(I))$, for the same reason that $(\Sigma(J), \Sigma(J_0), \Sigma(J_1))$ is a surgery triad for any link $J$.

From the previous paragraph and the link surgeries spectral sequence for symplectic instanton homology, it follows that we have a spectral sequence with
\[
	E^1 = \bigoplus_{I \in \{0,1\}^n} \SI(\Sigma(\calD(I))), \quad\quad d^1|_{\SI(\Sigma(\calD(I)))} = \sum_{\substack{\text{$I'$ an immediate} \\ \text{successor of $I$}}} F_{I < I^\prime}
\]
converging to $\SI(\Sigma(L), \mu)$. By Proposition \ref{prop:E2-Kh}, it follows that the $E^2$-page of this spectral sequence is isomorphic to $\Khr(m(L); \F_2)$.

To complete the proof, it remains to show that $\mu$ is nullhomologous. We construct an explicit surface in $\Sigma(L)$ bounded by $\mu$, as explained in \cite[Section 2]{scaduto}. The surface will be constructed as a lift of a surface in $S^3$ to the branched double cover; the idea is to apply a variant of the Seifert algorithm to the diagram $\calD$, where instead of connecting disks with twisted bands, we want to get a surface from attaching ``half-shaded'' twisted bands. This requires a modification to the usual algorithm, as we need to make sure each band connects a shaded region to a non-shaded region, and therefore we cannot use all of the disks the traditional seifert algorithm does. In any case, it is clear that the lift of the surface in $S^3$ defined by the region obtained by this construction (whose boundary consists of segments of $\calD$ and arcs connecting the two local components of $\calD$ at a crossing) lifts to a surface in $\Sigma(L)$ whose boundary is precisely $\mu$.

\begin{figure}[h]
	\centering
	\includegraphics[scale=1.5]{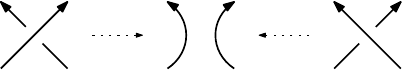}
	\caption{The oriented resolution of a crossing.}
	\label{fig:ori-res}
\end{figure}

To show that the desired surface exists, choose an orientation of $L$ and let $\calD'$ be the diagram obtained from $\calD$ by performing a oriented resolution (see Figure \ref{fig:ori-res}) of each crossing. $\calD'$ will consist of $N$ oriented circles $S_1^\prime, \dots, S_N^\prime$ in the plane. To each $S_k^\prime$, assign two signs $a_k$ and $b_k$:
\begin{itemize}
	\item $a_k = +1$ if $S_k^\prime$ is oriented counterclockwise; $a_k = -1$ otherwise.
	\item $b_k = (-1)^M$, where $M$ is the number of circles in $\calD'$ that surround $S_k^\prime$.
\end{itemize}
Fill in each disk bounded by a $S_k^\prime$ for which $a_kb_k = +1$ (one may wish to think as each $S_k^\prime$ as lying in the plane $z = k$ in $\R^3$, as some regions will be colored multiple times). To get the original link $L$ back, we attach bands with a half twist to connect the circles where we originally performed the oriented resolutions. The condition that $S_k^\prime$ be filled only if $a_kb_k = +1$ ensures that for any two circles joined by a band, exactly one of them has its interior filled, so that we may fill the appropriate half of the band to obtain our desired surface, which has boundary consisting of segements from $\calD$ and for each crossing of $\calD$, an arc connecting the two pieces of $L$ near that crossing.

The lift of the surface in $S^3$ constructed in the previous paragraph to the branched double cover $\Sigma(L)$ is a surface whose boundary is precisely $\mu$, and therefore $\mu$ is nullhomologous. It follows that $\SI(\Sigma(L), \mu) = \SI(\Sigma(L))$, and therefore our spectral sequence converges to $\SI(\Sigma(L))$, as claimed.
\end{proof}

As a quick corollary, we obtain a rank inequality for the symplectic instanton homology of a branched double cover.

\begin{cor}
For any link $L$ in $S^3$, $|\det(L)| \leq \rk \SI(\Sigma(L); \F_2) \leq \rk \Khr(m(L); \F_2)$.
\end{cor}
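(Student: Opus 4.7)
The second inequality $\rk \SI(\Sigma(L); \F_2) \leq \rk \Khr(m(L); \F_2)$ is essentially immediate from Theorem \ref{thm:Kh-SS}. A spectral sequence converging to a graded group $G$ satisfies $\rk G \leq \rk E^r$ for every page $r$, since each successive page is the homology of a differential and taking homology can only decrease the rank. Applying this with $r=2$ yields the desired bound.

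For the first inequality, the plan is to use the Euler characteristic of Proposition \ref{prop:euler-char} together with a universal coefficients argument. If $\det(L) = 0$ there is nothing to prove, so assume $\det(L) \neq 0$. Then $\Sigma(L)$ is a rational homology sphere, and a standard fact from the theory of branched double covers gives $|H_1(\Sigma(L); \Z)| = |\det(L)|$. Proposition \ref{prop:euler-char} then computes the $\Z/2$-graded Euler characteristic of the integral symplectic instanton homology as $\chi(\SI(\Sigma(L))) = |H_1(\Sigma(L); \Z)| = |\det(L)|$.

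Since the Euler characteristic of a $\Z/2$-graded finitely generated abelian group is bounded in absolute value by its integer rank, we obtain $\rk_\Z \SI(\Sigma(L)) \geq |\det(L)|$. Finally, by the universal coefficient theorem, $\dim_{\F_2} \SI(\Sigma(L); \F_2) \geq \rk_\Z \SI(\Sigma(L))$ (the $\F_2$-dimension picks up contributions both from the free part and from $2$-torsion), so $\rk \SI(\Sigma(L); \F_2) \geq |\det(L)|$ as required.

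No step of this argument is particularly delicate; the main point is assembling the already-established tools (the Khovanov spectral sequence of Theorem \ref{thm:Kh-SS}, the Euler characteristic computation of Proposition \ref{prop:euler-char}, and the standard identification $|H_1(\Sigma(L); \Z)| = |\det(L)|$) in the correct order. The only small subtlety is that the Euler characteristic statement is phrased for integer coefficients while we want a bound on the $\F_2$-rank, but this is resolved cleanly by universal coefficients.
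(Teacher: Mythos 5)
Your proof is correct and takes essentially the same approach as the paper: the second inequality follows from the spectral sequence of Theorem \ref{thm:Kh-SS} via the standard rank bound, and the first inequality follows from the Euler characteristic computation of Proposition \ref{prop:euler-char} together with $|H_1(\Sigma(L);\Z)| = |\det(L)|$. (One minor streamlining: since the Euler characteristic of a finite chain complex is independent of field coefficients, you could skip the universal coefficients argument and directly bound $|\det(L)| = |\chi(\SI(\Sigma(L);\F_2))| \leq \rk_{\F_2}\SI(\Sigma(L);\F_2)$.)
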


\begin{proof}
This follows from the fact that $\chi(\SI(\Sigma(L))) = |H_1(\Sigma(L))| = |\det(L)|$ and that there is a spectral sequence $\Khr(m(L)) \Rightarrow \SI(\Sigma(L))$.
\end{proof}
\section{Nontrivial Bundles on Branched Double Covers}

\label{sect:TQA}

The spectral sequence for a branched double cover discussed in the previous section can also be studied for nontrivial $\SO(3)$-bundles using a homology related to a ``twisted'' version of Khovanov homology, as first explained in the gauge theory context by Scaduto and Stoffregen \cite{TQA}.

\subsection{Two-Fold Marking Data}

Let $L = L_1 \amalg \cdots \amalg L_N \subset S^3$ be a link. A {\bf two-fold marking datum for $L$} is a function
\[
	\omega: \{L_1, \dots, L_N\} \longrightarrow \Z/2
\]
such that $\omega(L_1) + \cdots + \omega(L_N) \equiv 0 \text{ mod } 2$. One can think of $\omega$ as assigning a collection of points $\mathbf{p} = \{p_1, \dots, p_m\}$ to $L$, $m < N$, with one point on $L_k$ if $\omega(L_k) = 1$, such that the total number of points is even. The pointed link $(L, \mathbf{p})$ is easily seen to correspond to an element of $H_1(\Sigma(L); \F_2)$: if one takes a collection of arcs in $S^3$ whose interiors are disjoint from $L$ and whose endpoints are precisely the basepoints $\mathbf{p}$, then the lift of this collection of arcs to $\Sigma(L)$ will be a well-defined mod $2$ homology class. In fact, all elements of $H_1(\Sigma(L); \F_2)$ arise from some such collection of basepoints $\mathbf{p} \subset L$, so that we have the following:

\begin{prop}
There is a one-to-one correspondence between two-fold marking data $\omega$ for $L$ and elements of $H_1(\Sigma(L); \F_2)$.
\end{prop}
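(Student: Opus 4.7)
The plan is to verify that $\Phi: \omega \mapsto [\tilde\gamma_\omega]$ constructed in the paragraph preceding the proposition is a well-defined $\F_2$-linear bijection. The domain $\ker(\sum\colon \F_2^N \to \F_2)$ has dimension $N - 1$, and $\dim_{\F_2} H_1(\Sigma(L); \F_2) = N - 1$ as well (a standard fact that follows, e.g., by induction on diagram crossings using Example \ref{ex:bdc} and Mayer--Vietoris, with base case $\Sigma(U^N) \cong \#^{N-1}(S^2 \times S^1)$, and for split links using that $\Sigma(L \sqcup L') \cong \Sigma(L) \# \Sigma(L') \# (S^2 \times S^1)$). Once $\Phi$ is shown to be well-defined and linear, bijectivity reduces to either injectivity or surjectivity.

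For well-definedness, fix $\omega$ and two disjoint-arc systems $\gamma, \gamma' \subset S^3$ with $\partial \gamma = \partial \gamma' = \mathbf{p}$ and interiors in $S^3 \setminus L$. Then $\gamma + \gamma' \in Z_1(S^3; \F_2)$, and since $H_1(S^3; \F_2) = 0$, we have $\gamma + \gamma' = \partial \sigma$ for a 2-chain $\sigma$ taken transverse to $L$. Using the local hyperelliptic model for $p$ near each transverse intersection of $\sigma$ with $L$, the preimage $p^{-1}(\sigma)$ is a 2-chain in $\Sigma(L)$ satisfying $\partial p^{-1}(\sigma) = \tilde\gamma + \tilde\gamma'$ in $\F_2$, establishing $[\tilde\gamma] = [\tilde\gamma']$. $\F_2$-linearity of $\Phi$ is then immediate, since modulo $2$ the concatenation of arc systems at shared endpoints corresponds to addition of marking data.

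For surjectivity, the unlink case is handled by Proposition \ref{prop:H1-module} combined with the identification in the proof of Proposition \ref{prop:E2-Kh}: each basis vector $X_k$ of $H_1(\Sigma(U^N); \F_2)$ is mod-$2$ homologous to the lift of an arc between two specific components of $U^N$, hence lies in the image of $\Phi$. For general $L$ one proceeds by induction on the crossing number of a link diagram, using the surgery triad $(\Sigma(L), \Sigma(L_0), \Sigma(L_1))$ of Example \ref{ex:bdc} and a Mayer--Vietoris argument for the cobordisms in that triad to transfer the surjectivity statement from the resolved diagrams $L_0, L_1$ to $L$, exploiting that arcs realizing a marking of $L$ may be chosen disjoint from the crossing ball and so descend to valid arc systems in both resolutions.

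The main obstacle will be the chain-level verification of well-definedness: because $p$ fails to be a covering map along the branch locus, the identity $\partial p^{-1}(\sigma) = \tilde\gamma + \tilde\gamma'$ must be verified using the hyperelliptic local model at each transverse intersection of $\sigma$ with $L$, and one must confirm that this local calculation is consistent with the global definitions in $\F_2$-coefficients. The combinatorics of orientations is immaterial since we are working mod $2$, which is precisely what makes the preimage of an arbitrary transverse 2-chain a legitimate $\F_2$-chain in $\Sigma(L)$.
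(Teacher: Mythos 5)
The paper offers no proof of this proposition beyond the one-sentence sketch in the paragraph preceding it, so you are supplying an argument where the paper supplies none; your overall strategy (show $\Phi$ is a well-defined $\F_2$-linear map and then use the equality of dimensions to conclude) is a reasonable way to make that sketch rigorous.

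The well-definedness argument is essentially correct, though I would phrase the transfer step a bit differently: since $\gamma$ and $\gamma'$ have the same endpoints $\mathbf{p} \subset L$, one should first arrange that $\gamma$ and $\gamma'$ agree in a neighborhood of $\mathbf{p}$, so that $\gamma + \gamma'$ is a mod-$2$ cycle supported \emph{away} from $L$, can be bounded by a $2$-chain $\sigma$ transverse to $L$ and disjoint from $\mathbf{p}$, and then $p^{-1}(\sigma)$ is an honest mod-$2$ $2$-chain with $\partial p^{-1}(\sigma) = \tilde\gamma + \tilde\gamma'$. (Without that preliminary homotopy, $\sigma$ necessarily touches the branch locus at $\mathbf{p}$, and the "hyperelliptic local model" you invoke has to be applied at points of $\partial\sigma$, not just at interior transverse crossings, which is more delicate than you indicate.) The linearity observation is fine once one notes the representatives can be chosen to share endpoints on components where both marking data take the value $1$.

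The genuine gap is in the two places where you appeal to "induction on diagram crossings using the surgery triad and Mayer--Vietoris," namely the dimension count $\dim_{\F_2} H_1(\Sigma(L);\F_2) = N - 1$ and the surjectivity step. The problem is that resolving a crossing of $\calD$ generally changes the number of link components by $\pm 1$, so the target dimension $N - 1$ is \emph{not} constant across a surgery triad $(\Sigma(L), \Sigma(L_0), \Sigma(L_1))$, and you have not explained how the Mayer--Vietoris sequence for the Dehn fillings of the common piece $\tilde{Y}$ controls either the dimension or the image of $\Phi$ as $N$ changes. As written, neither induction gets off the ground. The dimension identity is a classical fact (it follows cleanly, for example, from the Smith/transfer sequence for the free $\Z/2$-action on $\Sigma(L) \setminus p^{-1}(L) \to S^3 \setminus L$, or from the mod-$2$ Goeritz presentation of $H_1(\Sigma(L))$), and it would be better to cite it than to sketch a crossing induction. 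Likewise, for surjectivity it is cleaner to argue directly: a class $\xi \in H_1(\Sigma(L);\F_2)$ can be represented by a $1$-cycle transverse to the lifted branch locus $\tilde{L}$; pushing $\xi$ down by $p_*$ and bounding in $S^3$, then lifting the bounding chain, shows $\xi$ is mod-$2$ homologous to a cycle supported near $\tilde{L}$, and intersections with $\tilde{L}$ read off a two-fold marking datum representing $\xi$. As it stands, the surjectivity step is an announced plan rather than a proof.
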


In particular, each two-fold marking datum $\omega$ corresponds to a nontrivial $\SO(3)$-bundle on $\Sigma(L)$, which by abuse of notation we will also refer to as $\omega$.

\subsection{Twisted Khovanov Homology and Dotted Diagram Homology}

We now wish to introduce a variant of Khovanov homology which takes into account two-fold marking data. To do this, we will first need to introduce a compatibility relation between the two-fold marking data and the link diagrams we use. Let $L \subset S^3$ be a link and $\calD$ be a diagram for $L$. An {\bf arc} of $\calD$ is a strand of $\calD$ that descends to an edge of the associated $4$-valent graph.  Let $\Gamma$ denote the set of all arcs in $\calD$, and given a component $L_k$ of $L$, let $\Gamma(L_k)$ denote the set of all arcs contained in the image of $L_k$ in $\calD$. Given a two-fold marking datum $\omega$ for $L$, we say that an assignment $\check{\omega}: \Gamma \longrightarrow \Z/2$ is {\bf compatible} with $\omega$ if
\[
	\sum_{\gamma \in \Gamma(L_k)} \check{\omega}(\gamma) \equiv \omega(L_k) \text{ mod } 2.
\]
The pair $(\calD, \check{\omega})$ will be called a {\bf two-fold marked diagram}. Note that the $0$- and $1$-resolutions of any crossing in $\calD$ have well-defined induced two-fold markings, giving two-fold marked diagrams $(\calD_0, \check{\omega}_0)$ and $(\calD_1, \check{\omega}_1)$.

Given a two-fold marked link $(L, \omega)$, fix a compatible two-fold marked diagram $(\calD, \check{\omega})$ with $n$ crossings, and fix an auxiliary basepoint $p_0 \in \calD$ (which has nothing to do with $\check{\omega}$). For any $I = (\epsilon_1, \dots, \epsilon_n) \in \{0, 1\}^n$, we then obtain a resolved diagram $(\calD(I), \check{\omega}(I))$ consisting of $k(I)$ pointed circles:
\[
	(\calD(I), \check{\omega}(I)) = (S(I)_1, \check{\omega}(I)_1) \cup \cdots \cup (S(I)_{k(I)}, \check{\omega}(I)_{k(I)}).
\]
Exactly one of these circles, which we will denote by $S_I$, contains the auxiliary basepoint $p_0$. As in the untwisted case, we define
\[
	\CKh(\calD, \check{\omega}) = \bigoplus_{I \in \{0,1\}^n} S_I \wedge V(\calD(I)),
\]
\emph{i.e.} $\CKh(\calD, \check{\omega})$ as a chain group is just the reduced Khovanov chain group with respect to the basepoint $p_0$, and we incorporate $\check{\omega}$ into the differential as follows. The differential $d$ on $\CKh(\calD, \check{\omega})$ is a sum of ``horizontal'' and ``vertical'' differentials,
\[
	d = d_h + d_v.
\]
The horizontal differential $d_h$ is just the usual Khovanov differential, defined in terms of merge/split maps. The vertical differential $d_v$ is defined by its restriction to a direct summand: 
\[
	d_v \xi = \sum_{j = 1}^{k(I)} \check{\omega}(I)_j S(I)_j \wedge \xi \text{ for } \xi \in S_I \wedge V(\calD(I)).
\]
One may check that $d^2 = 0$, and the {\bf twisted Khovanov homology} of $(L, \omega)$ is defined as
\[
	\Kh(L, \omega) = H_\ast(\CKh(\calD, \check{\omega}), d).
\]
$\Kh(L, \omega)$ may be thought of as a generalization or deformation of $\Khr(L)$: if $\omega_0$ is the trivial two-fold marking data (which assigns $0$ to all components of $L$), then $\Kh(L,\omega_0) \cong \Khr(L)$.

The horizontal and vertical differentials on $\CKh(\calD, \check{\omega})$ are easily seen to commute, and therefore $\CKh(\calD, \check{\omega})$ admits the structure of a double complex. The homology with respect to the horizontal differential $d_h$ is the reduced Khovanov homology of $L$, and the homology with respect to the vertical differential $d_v$ results in the subcomplex of $(\CKh(\calD, \check{\omega}), d_h)$ consisting of summands corresponding to $I \in \{0,1\}^n$ with all $\check{\omega}(I)_j$'s even. We call the homology of this subcomplex with respect to $d_h$ the {\bf dotted diagram homology} of $(\calD, \check{\omega})$ and denote it by
\[
	\Hd(\calD, \check{\omega}) = H_\ast(H_\ast(\CKh(\calD, \check{\omega}), d_v), d_h).
\]
We remark that $\Hd(\calD, \check{\omega})$ is \emph{not} an invariant of $(L,\omega)$, but it naturally appears as the $E^2$-page of a link surgeries spectral sequence for a nontrivial $\SO(3)$-bundle on the branched double cover of $L$, as we explain in the next section. Note that the spectral sequence for the double complex $(\CKh(\calD, \check{\omega}), d_v, d_h)$ gives another spectral sequence $\Hd(\calD, \check{\omega}) \Rightarrow \Kh(L, \omega)$.

\begin{rem}
\label{rem:Hd-complex}
It is easy to see that $d_v|_{\tilde{V}(\calD(I))}$ is an isomorphism if $\check{\omega}(I)$ is nonzero on any component of $\calD(I)$, and zero otherwise. It follows that $H_\ast(\CKh(\calD, \check{\omega}), d_v)$ is just the subcomplex of $\CKhr(\calD)$ consisting of $I$-summands with $\check{\omega}(I) \equiv 0$.
\end{rem}

\subsection{Spectral Sequence for Nontrivial Bundles on Branched Double Covers}

We now explain the relevance of dotted diagram homology to the symplectic instanton homology of nontrivial $\SO(3)$-bundles over branched double covers of links. 

\begin{thm}
\label{thm:SSdotted}
Let $(L, \omega)$ be a two-fold marked link in $S^3$, and suppose $(\calD, \check{\omega})$ is a compatible two-fold marked diagram for the mirror $(m(L), \omega)$. Then there is a spectral sequence with $E^2$-page isomorphic to the dotted-diagram homology $\Hd(\calD, \check{\omega})$ converging to $\SI(\Sigma(L), \omega)$, where $\omega$ denotes the $\SO(3)$-bundle on $\Sigma(L)$ induced by the two-fold marking data $\omega$.
\end{thm}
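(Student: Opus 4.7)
The proof strategy closely parallels that of Theorem \ref{thm:Kh-SS}, with the added step of tracking how the two-fold marking datum $\check{\omega}$ interacts with each cube vertex. Fix the diagram $(\calD, \check{\omega})$ for $(m(L), \omega)$ with $n$ crossings, and let $\mu = \mu_1 \amalg \cdots \amalg \mu_n \subset \Sigma(L)$ be the $n$-component link constructed as in the proof of Theorem \ref{thm:Kh-SS}, where each $\mu_i$ is the branched double cover of an arc connecting the two strands at the $i$-th crossing. The plan is to apply the link surgeries spectral sequence to $(\Sigma(L), \mu)$ equipped with the bundle $\omega$. Since $\mu$ was shown in the proof of Theorem \ref{thm:Kh-SS} to be nullhomologous in $\Sigma(L)$, this spectral sequence converges to $\SI(\Sigma(L), \omega \cup \omega_\mu) = \SI(\Sigma(L), \omega)$, with
\[
    E^1 = \bigoplus_{I \in \{0,1\}^n} \SI(\Sigma(\calD(I)), \omega_I),
\]
where $\omega_I$ denotes the bundle on $\Sigma(\calD(I))$ induced from $\omega$ by the surgery.

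The first key step is to identify the induced bundle $\omega_I$ with the two-fold marking datum $\check{\omega}(I)$ on the resolved diagram. Representing $\omega$ on $\Sigma(L)$ by lifts of arcs in $S^3 \setminus L$ whose endpoints are distributed on the components of $L$ according to $\check{\omega}$, when we perform the $I$-resolutions these arcs become arcs in $S^3 \setminus \calD(I)$ with endpoints on the circles of $\calD(I)$ according to $\check{\omega}(I)$. Their lifts to $\Sigma(\calD(I)) \cong \#^{k(I)-1}(S^2 \times S^1)$ (where $k(I)$ is the number of circles in $\calD(I)$) realize the bundle determined by $\check{\omega}(I)$. Combining Proposition \ref{prop:S1xS2} with the K\"unneth principle of Theorem \ref{thm:connectsum}, $\SI(\#^{k(I)-1}(S^2 \times S^1), \omega_I)$ vanishes unless every $S^2 \times S^1$ summand carries a trivial bundle, which (after using the correspondence between $H_1(\Sigma(\calD(I)); \F_2)$ and two-fold marking data modulo nullhomologous cycles) happens precisely when $\check{\omega}(I) \equiv 0$ on each component of $\calD(I)$. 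Hence the $E^1$-page simplifies to
\[
    E^1 \cong \bigoplus_{\substack{I \in \{0,1\}^n \\ \check{\omega}(I) \equiv 0}} \SI(\Sigma(\calD(I))),
\]
which by Remark \ref{rem:Hd-complex} agrees as a graded group with $H_\ast(\CKh(\calD, \check{\omega}), d_v)$.

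To identify $E^2$ with $\Hd(\calD, \check{\omega})$, we must match $d^1$ with the horizontal differential $d_h$ restricted to this subcomplex. The component of $d^1$ between two vertices $I < I'$ with $\check{\omega}(I) \equiv \check{\omega}(I') \equiv 0$ is the $2$-handle cobordism map coming from the surgery on the single component of $\mu$ where $I$ and $I'$ differ. Since both source and target carry the trivial $\SO(3)$-bundle, the discussion of Proposition \ref{prop:E2-Kh} applies verbatim: each such component of $d^1$ corresponds under the $\Psi_I$'s to the merge or comultiplication map arising from the associated resolution change, which is exactly $d_h|_{H_\ast(\CKh(\calD, \check{\omega}), d_v)}$. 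This completes the identification $E^2 \cong \Hd(\calD, \check{\omega})$.

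The main obstacle is the careful bookkeeping required in the first step, namely verifying that the bundle $\omega_I$ induced on $\Sigma(\calD(I))$ by the surgery really corresponds to $\check{\omega}(I)$ under the identification of $H_1(\Sigma(\calD(I)); \F_2)$ with the pushforward of $\omega$. Once this geometric correspondence is established and combined with the vanishing result of Proposition \ref{prop:S1xS2}, the rest of the argument is a mechanical extension of the proof of Theorem \ref{thm:Kh-SS}, since the nontriviality of the bundle only manifests through the selection of surviving $E^1$ summands, not through any change in the differentials between surviving terms.
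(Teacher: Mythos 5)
Your proof is correct and follows essentially the same route as the paper's: surgering along the lifted crossing-arcs $\mu$, identifying the induced bundle $\omega_I$ on $\Sigma(\calD(I))$ with the two-fold marking $\check{\omega}(I)$, invoking Proposition \ref{prop:S1xS2} plus K\"unneth to kill $E^1$-summands with $\check{\omega}(I) \not\equiv 0$, and then appealing to Proposition \ref{prop:E2-Kh} and Remark \ref{rem:Hd-complex} to match $d^1$ with $d_h$ on the surviving subcomplex. One small remark: you write $\Sigma(\calD(I)) \cong \#^{k(I)-1}(S^2 \times S^1)$ for a $k(I)$-circle resolution, which is the correct count; the paper's statement of Proposition \ref{prop:E2-Kh} has an off-by-one in the exponent that you implicitly corrected.
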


\begin{proof}

Let $\mu$ denote the $n$-component link in $\Sigma(L)$ obtained by lifting the $n$ arcs in $S^3$ connecting the two local components of each crossing of $\calD$. As in the proof of Theorem \ref{thm:Kh-SS}, for any $I \in \{0,1\}^n$, the result of $I$-framed surgery on $\mu = \mu_1 \amalg \cdots \amalg \mu_n$ results in $\Sigma(\calD(I))$. Letting $\omega \in H_1(\Sigma(L); \F_2)$ denote the mod $2$ homology class corresponding to the two-fold marking data $\omega$ (by abuse of notation), we get induced homology classes $\omega(I) \in H_1(\Sigma(\calD(I)); \F_2)$ for each $I \in \{0,1\}^n$. The link surgery spectral sequence for $(\Sigma(L), \omega)$ and the link $\mu$ then has
\[
	E^1 = \bigoplus_{I \in \{0,1\}^n} \SI(\Sigma(\calD(I)), \omega(I)), \quad\quad d^1|_{\SI(\Sigma(\calD(I)), \omega(I))} = \sum_{\substack{\text{$I^\prime$ an immediate} \\ \text{successor of $I$}}} F_{I < I^\prime}
\]
and converges to $\SI(\Sigma(L), \omega \cup \mu)$.

We now identify the $E^1$- and $E^2$-pages of the spectral sequence. First recall that the mod $2$ homology class $\omega(I) \in H_1(\Sigma(\calD(I)); \F_2)$ is obtained from the two-fold marked diagram $(\calD(I), \check{\omega}(I))$ by first choosing a compatible collection of dots $\mathbf{p}(I) \subset \calD(I)$ and family of arcs in $S^3$ whose interiors are disjoint from $\calD(I)$ and whose endpoints lie in $\mathbf{p}(I)$ in such a way that each point in $\mathbf{p}(I)$ occurs as the endpoint of one of these arcs exactly once. $\omega(I)$ is then the mod $2$ homology class of lift of this collection of arcs to $\Sigma(\calD(I))$. Aside from the marked circle $S_I \subset \calD(I)$, each circle in $\calD(I)$ corresponds to an $S^2 \times S^1$-summand under the identification $\Sigma(\calD(I)) \cong \#^{k(I)} S^2 \times S^1$, and it is easy to see that the lift of an arc connecting two circles in $\calD(I)$ is mod $2$ homologous to the disjoint union of the copies of $\text{pt} \times S^1$ in the $S^2 \times S^1$-summand the circles correspond to. It therefore follows that $\omega(I)$ is trivial on $S^2 \times S^1$-summands of $\Sigma(\calD(I))$ corresponding to circles in $\calD(I)$ that $\check{\omega}(I)$ is even on, and nontrivial on $S^2 \times S^1$-summands of $\Sigma(\calD(I))$ corresponding to circles in $\calD(I)$ that $\check{\omega}(I)$ is odd on. Since $\SI(S^2 \times S^1, [\text{pt} \times S^1]) = 0$ by \cite[Proposition 9.5]{horton1}, we can therefore use Proposition \ref{prop:E2-Kh} and our understanding of the vertical homology of the twisted Khovanov complex (cf. Remark \ref{rem:Hd-complex}) to conclude that
\[
	E^1 \cong H_\ast(\CKh(\calD, \check{\omega}), d_v), \quad\quad d^1 = d_h.
\]
By the definition of dotted diagram homology, it immediately follows that $E^2 \cong \Hd(\calD, \check{\omega})$. As argued in the proof of Theorem \ref{thm:Kh-SS}, the link $\mu$ in $\Sigma(L)$ is nullhomologous, so that the spectral sequence in fact converges to $\SI(\Sigma(L), \omega)$, as desired.
\end{proof}

\appendix

\bibliographystyle{plain}
\bibliography{traceless}

\end{document}